\title{The Asymptotic Behavior of Finite Energy Symplectic Vortices with Admissible Metrics}
\author{Bohui Chen}
\address{Department of Mathematics and Yangtz center of Mathematics, Sichuan University, Chengdu, 610065, China}
\email{bohui@cs.wisc.edu}
\urladdr{}
\author{Bai-Ling Wang}
\address{Department of Mathematics, Mathematical Sciences Institute, Australian National University, Canberra, ACT 0200, Australia}
\email{bai-ling.wang@anu.edu.au}
\urladdr{}
\author{Rui Wang}
\address{Department of Mathematics, University of California, Irvine, CA, 92697-3875, USA}
\email{ruiw10@math.uci.edu}
\urladdr{}
\newcommand{\blue}{\textcolor{blue}}
\newtheorem{thm}{Theorem}[section]    % Standard theorem environment
\newtheorem{lem}[thm]{Lemma}          % Lemma environment with numbering
\theoremstyle{definition}
\newtheorem{defn}[thm]{Definition}    % Definition environment with
\newtheorem{rem}[thm]{Remark} 
\newcommand{\ben}{\begin{enumerate}}
\newcommand{\een}{\end{enumerate}}
\newcommand{\be}{\begin{equation}}
\newcommand{\ee}{\end{equation}}
\newcommand{\bea}{\begin{eqnarray}}
\newcommand{\eea}{\end{eqnarray}}
\newcommand{\beastar}{\begin{eqnarray*}}
\newcommand{\eeastar}{\end{eqnarray*}}
\newcommand{\bc}{\begin{center}}
\newcommand{\ec}{\end{center}}
\theoremstyle{theorem}
\newtheorem{cor}[thm]{Corollary}
\newtheorem{prop}[thm]{Proposition}
\theoremstyle{definition}
\def\Z{{\mathbb Z}}
\def\C{{\mathbb C}}
\def\R{{\mathbb R}}
\def\C{\mathbb{C}}
\def\Z{\mathbb{Z}}
\def\CA{{\mathcal A}}
\def\CB{{\mathcal B}}
\def\CE{{\mathcal E}}
\def\CF{{\mathcal F}}
\def\CG{{\frak g}}
\def\CI{{\mathcal I}}
\def\CL{{\mathcal L}}
\def\CU{{\mathcal U}}
\def\CV{{\mathcal V}}
\def\CX{{\mathcal X}}
\def\delbar{{\overline \partial}}
\def\del{{\partial}}
\def\Lie{\textup{Lie}}
\def\Aut{\textup{Aut}}
\def\exp{\textup{exp}}
\def\id{\textup{id}}
\def\pr{\textup{pr}}
\def\Ad{\textup{Ad}}
\def\Crit{\textup{Crit}}
\def\Hol{\textup{Hol}}
\def\Par{\textup{Par}}
\def\Hess{\textup{Hess}}
\def\dist{\textup{dist}}
\def\Lie{\textup{Lie}}
\def\Orb{\textup{Orb}}
\def\Img{\textup{Img}}
\def\fg{{\mathcal G}}
\def\uu{\textup{u}}
\def\Log{\textup{Log}}
\def\reg{\textup{reg}}
\begin{document}

\begin{abstract}  Assume $(X, \omega)$ is a compact symplectic manifold with a Hamiltonian compact Lie group action and the zero in the Lie algebra is a regular value of the moment map $\mu$.  
We prove that a finite energy symplectic vortex exponentially converges to (un)twisted sectors of the symplectic reduction at cylinder ends whose metrics grow up at least cylindrically fast, \emph{without assuming the group action on the level set $\mu^{-1}(0)$ is free}. It generalizes the corresponding results  by Ziltener \blue{\cite{ziltener-thesis, ziltener-exp}} under the free action  assumption. 

The result of this paper is the first step in setting up the quotient morphism moduli space induced by the authors in \cite{quotientmorphism}. Necessary preparations in understanding the structure of such moduli spaces are also introduced here.  The quotient morphism constructed in \cite{quotientmorphism} is a part of the project on the quantum Kirwan morphism by the authors (see \cite{L2, L2moduli, augL2}). 

 % type your abstract below

\end{abstract}

\maketitle

\tableofcontents

%%%%%%%%%%%%%%%%%%%%   Start of main body of article

\section{Introduction and main results}

Assume $(X, \omega)$ is a compact smooth symplectic manifold and $G$ is a connected compact Lie group whose Lie algebra is denoted by $\CG$. 
By a Hamiltonian $G$-action on $(X, \omega)$ with a moment map $\mu$, we mean that 
\begin{enumerate}
\item The Lie group $G$ acts on $X$ smoothly and we use $$
\ell:G\times X\to X, \quad (g, x)\mapsto \ell_g(x) := g  x
$$
to denote this action;
\item The moment map $\mu: X\to \CG$ is a proper smooth map and it is $G$-equivariant with respect to the $G$-action on $X$ and the adjoint action on $\CG$, i.e., 
$$
\mu(\ell_g(x))=\Ad_g(\mu(x)), \quad \text{ for any }g\in G, x\in X;
$$
\item The $G$-action is Hamiltonian in the sense that $\iota_{X_\xi}\omega=\langle d\mu(\cdot), \xi\rangle$, where $X_\xi$ denotes the infinitesimal action generated by $\xi\in \CG$ on $X$, i.e., 
$$
X_\xi(x):=\frac{d}{d\epsilon}|_{\epsilon=0}\ell_{\exp(\epsilon \xi)}(x),
$$ and $\langle\cdot, \cdot\rangle$ denotes the Killing inner product on the Lie algebra $\CG$. 
\end{enumerate}
It is clear from definition that associated to a Hamiltonian $G$-action on $(X, \omega)$, the moment map $\mu$ is not unique but can be shifted up to some element $\tau\in C(\CG)$, where $C(\CG)$ denotes the center of the Lie algebra $\CG$. Throughout this paper, we fix a moment map $\mu$ and assume $0\in \CG$ is a regular value. All results in this paper apply to any $\tau\in C(\CG)$ which is regular.  

The level set $\mu^{-1}(0)$ is compact and $G$-invariant. The assumption that $0$ is a regular value ensures that $\mu^{-1}(0)$ is a compact smooth submanifold of $X$ with codimension $\dim G$ and the $G$-action on it is locally free. Moreover, $(\mu^{-1}(0), \omega|_{\mu^{-1}(0)})$ is coisotropic, with $\ker\omega|_{\mu^{-1}(0)}$ the same as the subbundle spanned by vector fields generated by infinitesimal $\CG$-actions. It follows the quotient space $M:=\mu^{-1}(0)/G$ endows a symplectic orbifold structure, which is referred as the well-known Marsden-Weinstein symplectic reduction in literature.

To do the Hamiltonian Gromov--Witten or Floer theory for $(X, \omega)$ and its symplectic reductions, one needs to assign an almost complex structure $J$ on $X$ which is compatible with the Hamiltonian group action. In this paper, we choose and fix a such $J$ such that it is $G$-invariant and $\omega$-compatible (such almost complex structure always exists due to the compactness of the $G$ and the existence of compatible almost complex structure for any symplectic manifold). 
The almost complex structure $J$ then reduces to a compatible almost complex structure on the symplectic orbifold $M$.
From now on, we use the sextuple $(X, \omega, J, G, \mu, M)$ to denote the geometric structure described above, and use $(\bar\omega, \bar{J})$ to denote the reduced almost K\"ahler structure on the orbifold $M$. 

In general, on a symplectic orbifold and in particular now on the symplectic reduction $(M, \bar\omega)$, the corresponding orbifold Gromov--Witten theory was developed by Chen-Ruan, which is now referred as the quantum Chen-Ruan orbifold theory (see \cite{chen-ruan} and the references therein). Chen-Ruan orbifold theory is a theory for the inertia orbifold which includes the information also from twisted sectors besides the orbifold itself. Any possible moduli space theory adapted to a symplectic \emph{orbifold} must satisfy the property that any finite energy curve should converge to (un)twisted sectors at ends.  (The Fan-Jarvis-Ruan-Witten theory \cite{fan-jarvis-ruan} which is analogous to the orbifold Gromov--Witten theory is another strong evidence for the importance of considering the twisted sectors.)

In the paper \cite{L2},  the authors set up a new quantum cohomology ring structure for the symplectic reduction $(M, \bar{\omega})$ using the symplectic vortex equations attached to $(X, \omega, J, G, \mu, M)$. In particular, they prove that a finite energy $L^2$-symplectic vortex (i.e., a symplectic vortex with cylinder metric at each punctured end) (exponentially) converges to (un)twisted sectors at ends after some gauge transformations. This is an essential step in setting up the $L^2$-HGW (HGW is short for Hamiltonian Gromov--Witten) moduli space which is used to define a quantum cohomology field theory for the symplectic reduction $(M, \bar{\omega})$ (\cite{L2moduli}). 
In this paper, we generalize the results on the asymptotic convergence of a finite energy symplectic vortex to more general types of metrics which we name admissible metrics, i.e.,  the K\"ahler metrics on the Riemann surface which are of the form 
$$
e^{2b t}(dt^2+d\theta^2), \quad  b\geq 0 
$$
on each cylinder end $[0, \infty)\times S^1$. 

By a critical loop (see Definition \ref{def:critical}) of a constant connection, we mean a loop in $X\times \CG$ of the form
$$
x_\infty=(\exp(-\theta\eta_\infty)z_0, \eta_\infty), \quad z_0\in \mu^{-1}(0), \eta_\infty\in \CG, 
$$ 
which can be identified with a point in an (un)twisted sector of the symplectic orbifold $M$.  
We prove in this paper that

\begin{thm}\label{thm:main}
Assume $(P, w)=(P, u, A)$ is an admissible finite energy symplectic vortex over the punctured Riemann surface $(\dot\Sigma, j, h)$ with $k$-punctures. 
Then there exist some gauge transformation $\Phi\in \fg_P$,  $k$ critical loops $x_\infty^i=(z_\infty^i, \eta_\infty^i)$ of constant connections, $i=1, \cdots, k$,  and some constants $C>0$, $\delta_i>0$, $i=1, \cdots, k$, which only depend on $(X, \omega, G, \mu, J, M)$, such that 
$\Phi\cdot w=(\Phi^*u, \Phi^*A)$ is temporal and for any $\theta\in S^1$
\begin{enumerate}
\item $|d_{\Phi^*A} \Phi^*u(t, \theta)|\leq Ce^{-\delta t}$,
\item $|\mu(\Phi^*u)(t, \theta)|\leq Ce^{-\delta t-bt}$,
\item $|F_{\Phi^*A}(t, \theta)|\leq Ce^{-\delta t+bt}$,
\item $\|\dist(\Phi\cdot u(t, \cdot), z_\infty(\cdot))\|_{L^p(S^1)}+\|\Phi^*A(t, \cdot)-\eta_\infty(\cdot)\|_{L^p(S^1)}\leq Ce^{-(\delta+b(\frac{2}{p}-1))t}$,
for $p\geq 2$ and $\delta+b(\frac{2}{p}-1)>0$.
\end{enumerate}
Here we use $z_\infty, \eta_\infty, \delta, b$ to denote the ones with corresponding superscripts $i$'s, $i=1, \cdots, k$.

Moreover, such $\delta_i$'s can be taken as any positive number which is smaller than $\frac{1}{|\Hol_{\eta_i}|}$, where $|\Hol_{\eta_i}|$ is the order of the holonomy of the connection form $\eta_i$. 
\end{thm}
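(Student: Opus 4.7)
The plan is to work on one cylindrical end $[0,\infty)\times S^1$ at a time, equipped with the admissible metric $e^{2bt}(dt^2+d\theta^2)$. First I would use parallel transport along $t$-lines to produce, on this end, a gauge transformation $\Phi$ bringing $A$ into temporal gauge $\Phi^*A = \eta(t,\theta)\,d\theta$. In temporal gauge the symplectic vortex equation becomes a first-order evolution system in $t$ for the pair $(u(t,\cdot),\eta(t,\cdot))\in C^\infty(S^1,X\times\CG)$, and the conformal factor $e^{2bt}$ appears explicitly: it multiplies the $\mu$-term and divides the $F_A$-term, which is the geometric origin of the $\pm b$ corrections in (2) and (3). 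This reduction is the same as in Ziltener's work and does not use the free-action hypothesis.

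Next, finite total energy forces the energy on $[T,\infty)\times S^1$ to vanish as $T\to\infty$. Combining this with the standard $\epsilon$-regularity / mean value inequality for symplectic vortices --- conformally rescaled to absorb $e^{2bt}$ --- yields pointwise smallness of $|d_Au|$, $e^{bt}|\mu\circ u|$ and $e^{-bt}|F_A|$ on the end. With pointwise control in hand, for any $t_n\to\infty$ the loops $(\Phi^*u(t_n,\cdot),\eta(t_n,\cdot))$ subconverge, after a further based gauge change on $S^1$, to a loop satisfying $\mu\circ z_\infty\equiv 0$, $\partial_\theta\eta_\infty=0$ and $\partial_\theta z_\infty + X_{\eta_\infty}(z_\infty)=0$. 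This is precisely a critical loop in the sense of Definition~\ref{def:critical}, and since the $G$-action on $\mu^{-1}(0)$ is only locally free, its image in $M$ is a point in a (possibly twisted) sector of the orbifold symplectic reduction.

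The main obstacle, and the heart of the proof, is upgrading this subsequential convergence to quantitative exponential decay with a sharp rate, without Ziltener's free-action hypothesis. My strategy is to introduce the local energy on the end,
\begin{equation*}
e(t) \;:=\; \int_t^\infty\!\!\int_{S^1}\bigl(|d_Au|^2 + e^{2bs}|\mu\circ u|^2 + e^{-2bs}|F_A|^2\bigr)\,d\theta\,ds,
\end{equation*}
and to prove an isoperimetric-style differential inequality $e'(t)\le -2\delta\,e(t)$. This reduces to a spectral gap for the loop operator $-i\partial_\theta - \eta_\infty$ acting on sections of the normal bundle to the orbit $G\cdot z_\infty$ inside $\mu^{-1}(0)$. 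In the non-free case such sections must be described in an orbifold chart around $z_\infty$; they become quasi-periodic on $S^1$ with monodromy $\exp(-2\pi\eta_\infty)$ acting through the isotropy group, so their eigenvalues are $n/|\Hol_{\eta_\infty}|$ for $n\in\Z$, and the smallest positive one is $1/|\Hol_{\eta_\infty}|$. This is exactly the bound on the admissible $\delta$ claimed in the last sentence of the theorem, and carrying out this spectral analysis in the orbifold category is what replaces, and generalizes, Ziltener's linearization on a smooth symplectic quotient.

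Integrating the differential inequality gives exponential $L^2$-decay of $e(t)$. Elliptic bootstrap combined with the mean value inequality upgrade this to the pointwise statements (1)--(3), with the appropriate $e^{\pm bt}$ prefactors dictated by the conformal rescaling. The $L^p(S^1)$ estimate (4) is then obtained by integrating (1) in $t$ to control the normal slice distance to the orbit and measuring the orbit direction using (2); the shift $b(\frac{2}{p}-1)$ records the change of volume form when passing from the admissible metric on $\{t\}\times S^1$ (whose length is $2\pi e^{bt}$) to the flat measure. Uniqueness of the limit critical loop, not merely of a subsequence, is automatic once this exponential decay has been established.
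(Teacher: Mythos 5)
Your outline reproduces the easy parts of the paper's argument (temporal gauge on each end, the mean value inequality to get pointwise smallness, bootstrapping a tail-energy decay to the pointwise statements), but the two steps you identify as the heart of the proof contain genuine gaps.

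First, you propose to prove the differential inequality $e'(t)\le -2\delta\,e(t)$ by reducing it to a spectral gap of the asymptotic loop operator at $z_\infty$, described in an orbifold chart. This is exactly the route the paper explains does \emph{not} work when $b>0$: in temporal gauge the evolution equation contains the non-autonomous factor $e^{2bt}$ (e.g.\ $\partial_t\eta=-e^{2bt}\mu(u)$), so the equation is not translation invariant and the usual ``compare the flow with the linearization at the limit'' argument breaks down; moreover, localizing the problem near a single critical loop presupposes convergence (not merely subsequential convergence along one sequence $t_n$), which is what you are trying to prove, and the subsequential limit of the connection component is itself not available from pointwise bounds alone. The paper circumvents this entirely: it constructs an action functional $\CL_w(t)$ on a whole $G$-invariant neighborhood of $\mu^{-1}(0)$ (using the equivariant Darboux normal form and capping discs), proves a \emph{uniform} isoperimetric inequality $\CL_w(t)\le c_0\bigl(\|d_\eta u(t)\|_2^2+c_1\|\mu\circ u(t)\|_2^2\bigr)$ whose constants depend only on the geometry and require no localization at a critical loop, and combines it with the energy identity $E(w;[t_1,t_2]\times S^1)=\CL_w(t_1)-\CL_w(t_2)$; admissibility $b\ge 0$ enters precisely when $\|\mu\circ u\|_2^2$ is absorbed into the weighted density $e^{2bt}|\mu\circ u|^2$. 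The resulting $\delta$ is not the sharp spectral constant; the claim that $\delta$ can be taken up to $1/|\Hol_{\eta_i}|$ is obtained afterwards by a separate three-interval argument using the Hessian computed in Proposition~\ref{prop:hess}, not by running the decay proof through the spectrum.

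Second, your treatment of the limit connection and of item (4) is too optimistic. The pointwise bound $|\mu(u)|\le Ce^{-\delta t-bt}$ gives only $|\partial_t\eta|\le Ce^{(b-\delta)t}$, which is not integrable once $b\ge\delta$, so the pointwise estimates (1)--(3) do \emph{not} imply that $\eta(t,\cdot)$ converges at all, let alone exponentially. The paper needs the refined $L^p$ a priori estimate of Gaio--Salamon, $\|\mu(u)\|_{L^p(K_0)}\le c\,\epsilon^{\frac2p+1}\|\partial_t u\|_{L^2}+c\,\epsilon^{\frac2p}\|\mu(u)\|_{L^2}$ with $\epsilon=e^{-bt_i}$, applied to the translated vortices; the extra gain $e^{-b(\frac2p+1)t}$ is what beats the $e^{2bt}$ in the equation and is the actual source of the exponent $\delta+b(\frac2p-1)$ in (4) (it is not a volume-form bookkeeping effect). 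Without this ingredient your argument cannot produce the limit $\eta_\infty$, and the final gauge transformation by $h_{\eta_\infty}$ that turns the limit into a critical loop with constant connection cannot even be formulated.
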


In particular, this proves the convergence of a finite energy symplectic vortex with admissible metric to the (un)twisted sectors of the symplectic reduction $M$ at punctured ends after some gauge transformation. The convergence is exponentially fast with some rate related to the first non-vanishing eigenvalue of certain (formal) Hessian operator (see Proposition \ref{prop:hess}).  
We refer readers to Proposition \ref{prop:hess},  Proposition \ref{prop:main} and Corollary \ref{cor:main} for more concrete and detailed results for Theorem \ref{thm:main}.

By using an approach which is different from Ziltener's \cite{ziltener-thesis, ziltener-exp} together with the study of the orbifold structure of symplectic reduction, from the technical point of view, Theorem \ref{thm:main} generalizes the following existed results for finite energy symplectic vortices:
\begin{enumerate}
\item It generalizes Ziltener's asymptotic convergence results in \cite{ziltener-thesis, ziltener-exp} by removing the %artificial 
assumption that the Lie group $G$-action on $\mu^{-1}(0)$ is free.  (We remark that a proof for affine vortices in compact K\"ahler manifolds or $\C^n$ was sketched by Venugopalan-Woodward in \cite[Section 4, 5]{venugopalan-woodward}. )
\item It generalizes the authors' results in \cite{L2} (which corresponds to $b=0$) by including other admissible metrics with $b>0$. In particular, the current result includes the affine metric, i.e., the case of $b=1$. 
\end{enumerate}
In fact, it is well-known to experts that one can expect certain exponential convergence of finite energy solutions from evolution type elliptic PDEs which are translation invariant, under the Morse--Bott condition. The translation invariant property corresponds to the cylindrical metric, i.e., $b=0$, here. For such case, usually one obtains a subsequence convergence first and then recover the $C^0$ exponential convergence with the help of the Morse--Bott condition. However, this method fails for non-translation invariant equations, which correspond to the case $b>0$. In this paper,  to deal with all admissible metrics, i.e., $b\geq 0$, we present a general strategy which is different from \cite{ ziltener-thesis, ziltener-exp}.  In particular in the proof we emphasize the only two ingredients in obtaining the exponential convergence: the a priori estimates and the Morse--Bott condition. This framework can be also applied to other evolution type elliptic PDEs than the one considered in this paper, though the interplay of these two ingredients might need to be modified.

From the application point of view, Theorem \ref{thm:main} is a basic step in constructing the quantum Kirwan morphism proposed by the authors (see \cite{L2, L2moduli, augL2}). Besides, the general setup in the proof, in particular the geometric understanding near the level set $\mu^{-1}(0)$, is necessary in constructing various moduli space theories related to symplectic reductions, including in the authors' work on the construction of quotient morphism moduli space in \cite{quotientmorphism}.

\begin{rem}[Historical remark on symplectic vortices in symplectic topology]
Using symplectic vortices to study the symplectic topology for symplectic manifolds admitting Hamiltonian Lie group action starts from the pioneering work by Cieliebak-Gaio-Salamon \cite{cieliebak-gaio-salamon} and independently by Mundet-i-Riera \cite{mundet1, mundet2} around 2000. 
In particular, a proposal on quantum Kirwan morphism was suggested by Gaio-Salamon in \cite{gaio-salamon} and then was developed by Ziltener \cite{ziltener-affine}, Woodward \cite{woodward1, woodward2, woodward3} in algebraic geometry setup and claimed recently by Tian-Xu in the expository article \cite{tian-xu} for K\"ahler manifolds with reductive group actions. (Other related work on symplectic vortices, e.g., on gauged Hamiltonian Floer theory and gauged Lagrangian Floer theory, see also \cite{frauenfelder1, frauenfelder2, xu-ham, wang-xu}.)

We remark that our approach in quantum Kirwan morphism proposed in \cite{L2, L2moduli, augL2} is essentially different from the approach above based on \cite{gaio-salamon}, though some technical results, including the current one, are recyclable. 
\end{rem}

The organization of the paper is as follows: 
\begin{itemize}
\item In Section \ref{sec:vortex}, we introduce the definition of finite energy symplectic vortices with admissible metrics. 
\item In Section \ref{sec:darboux}, we set-up a normal form near the regular level set $\mu^{-1}(0)$ using the equivaraint Darboux theorem.
\item In Section \ref{sec:critical}, we explain the structure of the space of critical loops and its relation to (un)twisted sectors of the orbifold $M$.
\item In Section \ref{sec:functional}, based on the normal form induced in Section \ref{sec:darboux}, we introduce an action function for a fixed map and derive an isoperimetric type inequality for it. 
Then we relate this action function to the Yang-Mills-Higgs energy for a symplectic vortex. After these preparations, one is able to obtain the exponential energy density decay for finite energy symplectic vortices with admissible metrics. The derivation is included in Section \ref{sec:thm}. 
\item In Section \ref{sec:thm}, we state and prove another useful interpretation for the asymptotic convergence result as Proposition \ref{prop:main}, and then give the proof of Theorem \ref{thm:main}.
\end{itemize}

The conventions we use in this paper are as follows:
\begin{itemize}
\item  For the Lie group $G$, its Lie algebra structure on $\CG=T_e G$ is defined via the Lie bracket of left invariant vector fields on $G$. Any $G$-principal bundle is a right principal bundle, so the action of $G$ is from the right. 
 \item The group $G$-action on $X$ is  from the left, denoted by $\ell_g$.  The tangent map of the $G$-action on $X$ is denoted by $\ell_{g*}: T_xX\to T_{gx}X$. 
\item We use $\exp: \CG\to G$ to denote the exponential map for the Lie group $G$. The infinitesimal vector field generated by $\xi\in \CG$ is defined as 
$$X_{\xi}(x):= \frac{d}{d\epsilon} \Big|_{\epsilon=0}\exp(\epsilon\xi)x.$$ 
Note that $[X_{\xi}(x), X_{\eta}(x)] = - X_{[\xi,\eta]}(x)$ as $X_{\xi}(x)$ is defined by the right invariant field associated to $\xi\in \CG$. 
\item For $\Phi\in \Aut(P)$, where $\Aut(P)$ is the gauge transformation group of the $G$-principal bundle $P$, it acts on any pair of a $G$-equivariant map $u_G: P\to X$ and a connection $A$ via pulling back, i.e.,  
$$
\Phi\cdot(u_G, A)=(\Phi^*u_G, \Phi^*A). 
$$
In particular, when $P\cong D\times G$ is trivial, here $D$ could be a surface or $S^1$ in this paper, $\Phi$ will be identified with a map $g: D\to G$ and the $(u_G, A)$ pair will be identified with the pair $(u, \eta)\in C^{\infty}(D, X)\times \Omega^1(D, \CG)$. Then  
$$
g\cdot (u, \eta)=(\ell_{g^{-1}}(u), Ad_{g^{-1}} \eta+g^{-1}dg).
$$
The Lie algebra of $\Aut(P)$ is $\Omega^0(P^{\Ad})$, the section of the associated bundle for the adjoint action $\Ad: G \to GL(\CG)$. 
The infinitesimal action of $\xi \in \Omega^0(P^{\Ad})$ at $(u, \eta)$ is given by $( -X_{\xi}, d\xi + [\eta, \xi])$.
\end{itemize}

\section{The finite energy symplectic vortices with admissible metrics}\label{sec:vortex}

\subsection{The symplectic vortices with admissible metrics}
Assume $(\Sigma, j)$ is a genus $g_\Sigma$ smooth closed Riemann surface with $k$ ordered marked points. Denote by $(\dot\Sigma, j)$ the punctured one by removing all marked points. 
\begin{defn}A K\"ahler metric $h$ on $(\dot\Sigma, j)$ is called \emph{admissible}, if for each marked point $p_i$ in $\Sigma$, $i=1, \cdots, k$, there exists a neighborhood $U_i$ of $p_i$ in $\Sigma$ such that the K\"ahler structure of the end $(E_i:=U_i\setminus\{p_i\}, j, h|_{E_i})$ is isometric to $([0, \infty)\times S^1, j_0, e^{2b_i t}(dt^2+d\theta^2))$ with $b_i\geq 0$, $i=1, \cdots, k$.
Here $j_0$ denotes the standard complex structure on cylinder, i.e., $j_0(\frac{\del}{\del t})=\frac{\del}{\del\theta}$, $j_0(\frac{\del}{\del \theta})=-\frac{\del}{\del t}$ with $t\in [0, \infty), \theta\in S^1$.
\end{defn}

Assume $P$ is a principal $G$-bundle over $\dot\Sigma$. 
Denote by $C^\infty_G(P, X)$ the set of smooth $G$-equivariant maps from $P$ to $X$, and by $\CA(P)$ the set of connection $1$-forms on $P$.
Denote by $Y=P\times_GX$ the associated bundle of $X$ over $\dot{\Sigma}$, and by $P^\Ad:=P\times_G\CG$ the adjoint bundle.  
Then one can identify $C^\infty_G(P, X)$ with the set of smooth sections of $Y\to \dot{\Sigma}$,  and identify $\CA(P)$ with $\Omega^1(\dot{\Sigma}, P^\Ad)$.

A connection $A\in \CA(P)$ induces a splitting of the tangent bundle of $Y$ as 
$$
TY=H_AP\oplus (P\times_GTX), 
$$
with $H_AP:=\ker A$ as the horizontal distribution of $TP$. Here the $G$-action on $TX$ is induced from the $G$-action on $X$. Denote by $\Pi_A$ the endomorphism of $TY$ induced by the projection $\pi_A: TY\to P\times_GTX$ from this splitting. 

The vector bundle $P\times_GTX\to \dot{\Sigma}$ carries the  complex structure induced from the almost complex structure $J$, since $J$ is assumed to be $G$-invariant. We still use $J$ to denote this complex structure. Together with the horizontal lifting $j_A$ on $H_AP$ from the complex structure $j$ on $\dot{\Sigma}$, $Y$ carries an almost complex structure defined as 
$$
J_A:=j_A\oplus J. 
$$

For any map $u_G\in C^\infty_G(P, X)$, denote by $u$ the corresponding section of $Y\to \dot{\Sigma}$. The $A$-twisted tangent map of $u_G$ is defined as the $u^*TY$-valued $1$-form over $\dot{\Sigma}$
$$
d_Au:=\Pi_A\circ du.
$$
Denote by $\delbar_A u$ the $(0, 1)$-part of $d_Au$ with respect to the pair of complex structures $(j, J_A)$. 
In the context below, we do not distinguish $u_G$ and $u$ and readers should not be confused.  

\begin{rem} For a (local) trivialization $P|_U=U\times G$ with holomorphic coordinates $(t, s)$ for $U$, locally  
$$
d_Au(\frac{\del}{\del t})=\frac{\del u}{\del t}+X_{A(\frac{\del}{\del t})}, \quad 
d_Au(\frac{\del}{\del s})=\frac{\del u}{\del s}+X_{A(\frac{\del}{\del s})}
$$
and 
\beastar
\del_A u&=&\frac{1}{2}(d_Au-J\circ d_A u\circ j)\\
&=&\frac{1}{2}[(\frac{\del u}{\del t}+X_{A(\frac{\del}{\del t})})-J(\frac{\del u}{\del s}+X_{A(\frac{\del}{\del s})})]dt+\frac{1}{2}J[(\frac{\del u}{\del t}+X_{A(\frac{\del}{\del t})})-J(\frac{\del u}{\del s}+X_{A(\frac{\del}{\del s})})]ds\\
\delbar_A u&=&\frac{1}{2}(d_Au+J\circ d_A u\circ j)\\
&=&\frac{1}{2}[(\frac{\del u}{\del t}+X_{A(\frac{\del}{\del t})})+J(\frac{\del u}{\del s}+X_{A(\frac{\del}{\del s})})]dt+\frac{1}{2}J[(\frac{\del u}{\del t}+X_{A(\frac{\del}{\del t})})+J(\frac{\del u}{\del s}+X_{A(\frac{\del}{\del s})})]ds.
\eeastar
\end{rem}
\begin{defn}
A symplectic vortex over $(\dot\Sigma, j, h)$ is a smooth principal $G$-bundle $P$ over $\dot\Sigma$ together with a 
pair of maps $w:=(u, A)\in C^\infty_G(P, X)\times \CA(P)$ which satisfies the following nonlinear PDEs
\bea
\delbar_{A}u&=&0\\
F_A+*_h\mu(u)&=&0, 
\eea 
where $*_h$ is the Hodge $*$ operator with respect to the K\"ahler metric $h$. 

If moreover $h$ is admissible, we say $(P, w)$ is a \emph{symplectic vortex with admissible metric}. 

\end{defn}

\subsection{The Yang-Mills-Higgs (YMH) energy}

The Yang-Mills-Higgs (YMH) energy is a functional defined for pairs $(P, w)$ with $P$ a $G$-principal bundle over $\dot\Sigma$ and $w=(u, A)\in C^\infty_G(P, X)\times \CA(P)$ as
\bea\label{eq:e}
E(P, u, A)&:=&\int_{\dot{\Sigma}}e(P, u, A)\,\nu_{h}\nonumber\\
e(P, u, A)&:=&\frac{1}{2}(|d_Au|_h^2+|F_A|_h^2+|\mu\circ u|_h^2),
\eea
where $\nu_h$ is the volume form of $(\dot{\Sigma}, h)$, and $|\cdot|_h$ denotes the norm induced by the K\"ahler metric $h$ and the corresponding target metrics for (vector valued) forms over $\dot{\Sigma}$. We omit YMH and only call it the energy if no confusion could happen. 
The function $e(P, u, A): \dot{\Sigma}\to [0, \infty)$ is called the (YMH) energy density.  

The equality 
\be\label{eq:energyeq}
e(P, u, A)\nu_h=(|\delbar_A u|_h^2+\frac{1}{2}|F_A+*_h\mu(u)|_h^2)\nu_h+(u^*\omega-d\langle\mu(u), A\rangle)
\ee
characterizes the key feature of the YMH energy density and in particular indicates that the YMH energy is the right energy in studying symplectic vortices. To be more concrete, 
\begin{enumerate}
\item The first term $(|\delbar_A u|_h^2+\frac{1}{2}|F_A+*_h\mu(u)|_h^2)\nu_h$ in \eqref{eq:energyeq} vanishes if and only if $(P, u, A)$ is a symplectic vortex over $(\dot{\Sigma}, j, h)$;
\item The integral of the second term $u^*\omega-d\langle\mu(u), A\rangle$ in \eqref{eq:energyeq} is a  topological invariant, whenever  the pair $(P, w)$ have a nice asymptotic behavior (e.g., as the asymptotic convergence proved in this paper). 
\end{enumerate}

The derivation of \eqref{eq:energyeq} which is given in \cite[Proposition 3.1]{cieliebak-gaio-salamon} is based on the following two identities whose proofs are straightforward. In particular, when take trivial connection $A=0$, this recovers the corresponding identities for symplectic manifolds.
\begin{lem} 
\begin{enumerate}
\item $|d_Au|_h^2=|\del_Au|_h^2+|\delbar_Au|_h^2+2\langle F_A, *_h(\mu\circ u)\rangle$;
\item $u^*\omega-d\langle \mu\circ u, A\rangle=\frac{1}{2}(|\del_Au|_h^2-|\delbar_Au|_h^2)\nu_h$.
\end{enumerate}
\end{lem}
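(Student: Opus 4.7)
Both identities are pointwise, so my plan is to verify them in an arbitrary local conformal chart with a local trivialization of $P$. Fix such a chart with coordinates $(t,s)$ satisfying $j\partial_t=\partial_s$ and $h=\lambda^2(dt^2+ds^2)$, and write $A=A_t\,dt+A_s\,ds$ together with $v_i:=\partial_i u+X_{A_i}(u)$ for $i\in\{t,s\}$. The explicit local formulas for $d_Au$, $\partial_Au$, and $\bar\partial_Au$ displayed in the preceding remark are then in hand, together with $|d_Au|_h^2=\lambda^{-2}(|v_t|^2+|v_s|^2)$ and $\nu_h=\lambda^2\,dt\wedge ds$.

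For identity (1), I would expand $|\partial_Au|_h^2$ and $|\bar\partial_Au|_h^2$ directly from their local expressions. Using the $J$-invariance of $g_J(\cdot,\cdot):=\omega(\cdot,J\cdot)$ and the polarization
\[
|w_1\pm Jw_2|^2=|w_1|^2+|w_2|^2\mp 2\omega(w_1,w_2),
\]
each of the two squared norms differs from $\frac{1}{2}|d_Au|_h^2$ by a multiple of $\omega(v_t,v_s)$. I would then expand $\omega(v_t,v_s)=\omega(\partial_t u+X_{A_t},\partial_s u+X_{A_s})$ by bilinearity and apply the moment map identity $\iota_{X_\xi}\omega=d\mu^\xi$, together with its consequence $\omega(X_\xi,X_\eta)=\langle\mu,[\xi,\eta]\rangle$ (derived from $G$-equivariance of $\mu$). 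The $A$-dependent pieces then combine with the local formula $F_A=(\partial_tA_s-\partial_sA_t+[A_t,A_s])\,dt\wedge ds$ to produce exactly $2\langle F_A,*_h\mu(u)\rangle$ after matching the conformal volume factors.

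For identity (2), I would expand $u^*\omega=\omega(\partial_tu,\partial_su)\,dt\wedge ds$ and compute $d\langle\mu(u),A\rangle$ by the Leibniz rule. The same moment map identities used in (1) let me rewrite the derivative terms $\langle d\mu(\partial_iu),A_j\rangle$ as $\omega$-pairings, after which a direct regrouping shows that $u^*\omega-d\langle\mu(u),A\rangle$ equals $\omega(v_t,v_s)\,dt\wedge ds$ modulo a curvature-type contribution proportional to $\langle\mu(u),F_A\rangle$ that is consistent with the curvature term appearing in (1). Finally, I would convert $\omega(v_t,v_s)\,dt\wedge ds$ into $\frac{1}{2}(|\partial_Au|_h^2-|\bar\partial_Au|_h^2)\nu_h$ by invoking the polarization formulas recorded in (1).

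The whole argument is local and formal once the decomposition $d_Au=\partial_Au+\bar\partial_Au$ is in place; what remains is mechanical bookkeeping. The main (mild) obstacle is sign tracking: getting the signs right in the Poisson-type identity $\omega(X_\xi,X_\eta)=\langle\mu,[\xi,\eta]\rangle$ under the convention $[X_\xi,X_\eta]=-X_{[\xi,\eta]}$ used in the paper, and matching the conformal factor $\lambda^2$ when converting between $\langle F_A,*_h\mu(u)\rangle$ as a function and $\langle\mu(u),F_A\rangle$ as a $\CG$-valued $2$-form. These are bookkeeping issues rather than conceptual difficulties, and both identities then fall out of elementary multilinear algebra applied to the local expressions.
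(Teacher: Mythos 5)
Your overall strategy---a pointwise computation in a conformal chart using the polarization identity $|w_1\pm Jw_2|^2=|w_1|^2+|w_2|^2\mp2\omega(w_1,w_2)$ together with the moment map identities---is the right one (the paper itself gives no proof, deferring to \cite[Proposition 3.1]{cieliebak-gaio-salamon}, which proceeds exactly this way). But the step you describe for identity (1) fails. Writing $v_t=\frac{\del u}{\del t}+X_{A_t}$, $v_s=\frac{\del u}{\del s}+X_{A_s}$, your own polarization formula gives
$|\del_Au|_h^2=\frac{1}{2}|d_Au|_h^2+\lambda^{-2}\omega(v_t,v_s)$ and $|\delbar_Au|_h^2=\frac{1}{2}|d_Au|_h^2-\lambda^{-2}\omega(v_t,v_s)$: the cross terms enter the two squared norms with \emph{opposite} signs, so in the sum they cancel and one gets $|d_Au|_h^2=|\del_Au|_h^2+|\delbar_Au|_h^2$ exactly, with nothing left over to ``combine with $F_A$''. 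There is no mechanism that produces the term $2\langle F_A,*_h(\mu\circ u)\rangle$; in fact item (1) as printed fails as a pointwise identity (test it on a constant map $u$ with $\mu(u)\neq0$ and a connection with $F_A\neq 0$: the left side and the first two terms on the right reduce to the same expression in $A_t, A_s$, while the curvature pairing is nonzero).

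The curvature term actually belongs to item (2). Carrying out the expansion you outline---$\omega(v_t,v_s)$ by bilinearity, $\iota_{X_\xi}\omega=\langle d\mu(\cdot),\xi\rangle$, $d\mu(X_\xi)=[\xi,\mu]$, and the local formula $F_A(\del_t,\del_s)=\del_tA_s-\del_sA_t+[A_t,A_s]$---yields
$u^*\omega-d\langle\mu\circ u,A\rangle=\omega(v_t,v_s)\,dt\wedge ds-\langle\mu(u),F_A\rangle=\frac{1}{2}(|\del_Au|_h^2-|\delbar_Au|_h^2)\nu_h-\langle F_A,*_h(\mu\circ u)\rangle\nu_h$.
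You notice this leftover (``modulo a curvature-type contribution \dots consistent with the curvature term appearing in (1)'') but then wave it away; it cannot be absorbed into (1), so your argument does not close for either item in the form stated. The two misplacements cancel when the identities are added in the combination used to derive \eqref{eq:energyeq}, so the energy identity and everything downstream are unaffected; but a correct write-up should prove the identities with the curvature pairing deleted from (1) and subtracted on the right-hand side of (2), and should flag the discrepancy with the statement as printed rather than assert that the bookkeeping works out.
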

By a \emph{finite energy symplectic vortex} over $(\dot\Sigma, j, h)$, we mean that a symplectic vortex $(P, w)$ whose YMH energy is finite.

\subsection{The gauge action}
From now on, we \emph{fix} a punctured Riemann surface $(\dot\Sigma, j)$ with a chosen K\"ahler metric $h$ as domain. Moreover, we \emph{fix} a principal $G$-bundle $P\to \dot\Sigma$, and so we omit $P$ from the pair $(P, w)$ if there is no danger of confusion.  
Denote by $\fg:=\fg_P$ the gauge transformation group of $P$, and denote by 
$$
\CB:=\CB_P:=C^\infty_G(P, X)\times \CA(P).
$$
The gauge action on $P$ induces the $\fg$-action on $\CB$ as 
$$
\Phi\cdot w=(\Phi^*u, \Phi^* A).
$$ 
Regarding this gauge action, it is clear that 
\begin{lem}For any $w\in \CB$ and any $\Phi\in \fg$,
\begin{enumerate}
\item $E(\Phi\cdot w)=E(w)$;
\item $\Phi\cdot w$ is a symplectic vortex if $w$ is a symplectic vortex .
\end{enumerate}
\end{lem}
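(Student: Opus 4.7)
The plan is to reduce both parts to three transformation identities for the building blocks $d_A u$, $F_A$, and $\mu \circ u$ under the gauge action, and then to invoke the $G$-invariance built into the data $(X, \omega, J, G, \mu)$.

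In a local trivialization $P|_U \cong U \times G$ in which $\Phi$ is represented by $g: U \to G$ and $A$ by $\eta \in \Omega^1(U,\CG)$, a direct computation from the conventions listed at the end of the introduction gives
\begin{align*}
d_{\Phi^* A}(\Phi^* u) &= \ell_{g^{-1}*}(d_A u), \\
F_{\Phi^* A} &= \Ad_{g^{-1}} F_A, \\
\mu \circ (\Phi^* u) &= \Ad_{g^{-1}}(\mu \circ u).
\end{align*}
The first identity follows from observing that the horizontal projection $\Pi_{\Phi^* A}$ intertwines with $\Pi_A$ via the differential of the $G$-action; the second is the standard gauge transformation law for curvature, immediate from $\Phi^* A = \Ad_{g^{-1}} \eta + g^{-1} dg$; the third is the $G$-equivariance of $\mu$.

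For (1), the fibre metric on $P \times_G TX$ induced by $(\omega, J)$ is well-defined precisely because $\omega$ and $J$ are $G$-invariant, so $\ell_{g*}$ is a fibrewise isometry; meanwhile the norms of $F_A$ and $\mu(u)$ are taken with respect to the Killing inner product, which is $\Ad$-invariant. Hence each of the three terms in the density $e(P, u, A)$ is pointwise gauge-invariant, and integrating over $\dot\Sigma$ gives $E(\Phi \cdot w) = E(w)$. For (2), the $G$-invariance of $J$ on $TX$ means $\ell_{g*} \circ J = J \circ \ell_{g*}$, so the transformation formula for $d_A u$ upgrades to $\delbar_{\Phi^* A}(\Phi^* u) = \ell_{g^{-1}*}(\delbar_A u)$; and $F_{\Phi^* A} + *_h \mu(\Phi^* u) = \Ad_{g^{-1}}(F_A + *_h \mu(u))$, since $*_h$ acts only on the form factor. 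Both vortex equations for $\Phi \cdot w$ are therefore equivalent to those for $w$.

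I do not expect a real obstacle: once the three transformation identities are in hand, both statements are essentially bookkeeping. The only care needed is to keep the left/right and sign conventions consistent when passing between $\Phi \in \fg_P$ and its local representative $g$, and to verify that the pointwise local identities assemble into global ones, which they do because $d_A u$, $F_A$, and $\mu(u)$ are all sections of associated bundles transforming equivariantly under change of trivialization.
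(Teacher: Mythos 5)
Your proof is correct and is exactly the standard verification: the paper states this lemma without proof (``it is clear that''), and the three transformation identities for $d_Au$, $F_A$, and $\mu\circ u$ together with the $G$-invariance of $(\omega, J)$ and the $\Ad$-invariance of the Killing form are precisely the details being left to the reader. Nothing to add.
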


Assume $(P, u, A)$ is a symplectic vortex over $(\dot\Sigma, j, h)$. Over each end $E_i$ identified with $[0, \infty)\times S^1$, $i=1, \cdots, k$, where $k$ is the number of punctures, we can take a global trivialization of $P|_{E_i}\to E_i$ and after this trivialization, the connection on $P|_{E_i}$ can be identified with some 
$A_i=A_{i,1}(t, \theta)\,dt+A_{i, 2}(t, \theta)\,d\theta$, $(t, \theta)\in [0, \infty)\times S^1$, where  
$$
A_{i, 1}, A_{i, 2}: [0, \infty)\times S^1\to \CG.
$$
It is clear from the ODE theory that one can always find a (unique) smooth gauge transformations $\Phi_i\in \fg_{P|_{E_i}}\cong C^{\infty}([0, \infty)\times S^1, G)$ and some $\eta_i: [0, \infty)\times S^1\to \CG$, such that
$$
\Phi_i^*A_i=\eta_i(t, \theta)\,d\theta, \quad \Phi_i(0, \cdot)\equiv e. 
$$
Such connection of the form $\eta(t, \theta)\,d\theta$ is called a temporal connection over $[0, \infty)\times S^1$. 
Obviously, one can always find a gauge transformation $\Phi\in \fg_P$ whose restriction on each end is $\Phi_i$. Then for every connection $A\in \CA(P)$, there exists some gauge transformation to make $A$ be temporal at each end. 
In particular, when $w=(u, A)$ is a symplectic vortex and $A$ is temporal at each end, we call 
$w$ a \emph{temporal symplectic vortex}.

\section{The normal form near $\mu^{-1}(0)$}\label{sec:darboux}

In this section,  we describe a normal form for the symplectic manifold $(X, \omega)$ that admits the Hamiltonian $G$-action with moment map $\mu$ near a small $G$-invariant tubular neighborhood of $\mu^{-1}(0)$. The 
construction will be used for a better understanding of the formal Hessian operator defined in Section \ref{sec:critical} as well as in the later definition of the local functional in Section \ref{sec:functional}. 

For every $x\in X$, we denote by $L_x: \CG\to T_xX$ the infinitesimal action of the Lie algebra $\CG$, i.e., 
$$
L_x(\xi):=X_\xi(x)
$$
to emphasize the role as a linear operator.

The assumption that $0$ is a regular value of the moment map leads to the following well-known decomposition of $T_z\mu^{-1}(0)$ at any $z\in \mu^{-1}(0)$. We summarize them as in the following lemma and also  fix some notations.  
\begin{lem}Assume $0$ is a regular value of the moment map $\mu$. Then 
for any $z\in \mu^{-1}(0)$, $L_z$ is injective and 
$$
T_z\mu^{-1}(0)=\ker d_z\mu=(\Img(JL_z))^\perp,
$$ 
where $\perp$ denotes the orthogonal complement with respect to the metric $\omega(\cdot, J\cdot)$ for $X$.
Further,  $T_z\mu^{-1}(0)$ has the decomposition 
\be\label{eq:decomp}
T_z\mu^{-1}(0)=\Img(L_z)\oplus H_z,
\ee
where $H_z:=\ker(d_z\mu\circ J)=(\Img(L_z))^\perp\cap T_z\mu^{-1}(0)$. The splitting \eqref{eq:decomp} is $G$-invariant. 
\end{lem}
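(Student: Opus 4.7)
My plan is to reduce all claims to a single, standard identity linking the moment map to the induced Riemannian metric, and then to exploit positive-definiteness of that metric together with the $G$-equivariance of all the data.

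\smallskip

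\noindent\textbf{Step 1: the key identity.} I would start by deriving, from the moment map axiom and the definition $g(\cdot,\cdot):=\omega(\cdot, J\cdot)$, the pointwise identity
\[
\langle d_z\mu(v),\xi\rangle \;=\; \omega(L_z\xi,v) \;=\; g(v,JL_z\xi)
\qquad\text{for all } v\in T_zX,\ \xi\in\CG.
\]
This identity is the entire engine of the proof: it dictionary-translates the analytic kernels of $d_z\mu$ and $d_z\mu\circ J$ into the geometric orthogonal complements of $\Img L_z$ and $\Img JL_z$.

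\smallskip

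\noindent\textbf{Step 2: injectivity and the first equality.} If $L_z\xi=0$, the identity gives $\langle d_z\mu(v),\xi\rangle=0$ for all $v$, so $\xi$ pairs trivially with $\Img d_z\mu$; since $0$ is a regular value, $d_z\mu$ is surjective and hence $\xi=0$. The same identity immediately yields $\ker d_z\mu=(\Img JL_z)^{\perp}$, which coincides with $T_z\mu^{-1}(0)$ by the definition of tangent space to the level set.

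\smallskip

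\noindent\textbf{Step 3: orthogonal decomposition.} Next I would check that $\Img L_z\subset T_z\mu^{-1}(0)$, either by differentiating $\mu(\exp(t\xi)z)=\Ad_{\exp(t\xi)}\mu(z)=0$ at $t=0$, or by plugging $v=L_z\eta$ into the identity and using antisymmetry of $\omega$. Because $g$ is positive definite and $L_z$ is injective, the restriction $g|_{\Img L_z}$ is nondegenerate, so inside $T_z\mu^{-1}(0)$ I get the orthogonal splitting
\[
T_z\mu^{-1}(0) \;=\; \Img L_z \ \oplus\ \bigl(\Img L_z\bigr)^{\perp}\cap T_z\mu^{-1}(0).
\]
To identify this second summand with $\ker(d_z\mu\circ J)$, apply the identity with $v$ replaced by $Jv$ and use that $J$ is a $g$-isometry: $\langle d_z\mu(Jv),\xi\rangle=g(Jv,JL_z\xi)=g(v,L_z\xi)$. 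Thus a vector $v\in T_z\mu^{-1}(0)$ lies in $\ker(d_z\mu\circ J)$ exactly when $v\perp\Img L_z$, giving the second description of $H_z$.

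\smallskip

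\noindent\textbf{Step 4: $G$-invariance.} Finally, $G$-invariance of the splitting follows by naturality: the $G$-action preserves $\omega$ and $J$, hence $g$, and the equivariance relation $\ell_{g*}\,L_z(\xi)=L_{gz}(\Ad_g\xi)$ shows $\ell_{g*}(\Img L_z)=\Img L_{gz}$. Since $\ell_{g*}$ is a linear isometry of the tangent spaces that preserves $T\mu^{-1}(0)$, it also preserves the orthogonal complement, so $\ell_{g*}H_z=H_{gz}$.

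\smallskip

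I do not expect any serious obstacle: all four claims are essentially bookkeeping once the basic identity is written down, and the most error-prone point is only the sign/convention tracking when passing between $\omega$, $g$, and $J$ (where it helps to double-check using $g(u,Ju)=\omega(u,J^2u)=0$). The mild subtlety worth flagging in the write-up is that $\ker(d_z\mu\circ J)$, viewed inside $T_zX$, is actually all of $(\Img L_z)^{\perp}$; the displayed equality $H_z=\ker(d_z\mu\circ J)=(\Img L_z)^{\perp}\cap T_z\mu^{-1}(0)$ should therefore be read with the implicit restriction to $T_z\mu^{-1}(0)$, which is exactly how the decomposition is used.
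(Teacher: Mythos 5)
The paper states this lemma without proof (it is presented as a well-known consequence of $0$ being a regular value), so there is no argument in the text to compare against; your write-up is correct and is the standard one, built on the identity $\langle d_z\mu(v),\xi\rangle=\omega(L_z\xi,v)=g(v,JL_z\xi)$, and your closing remark about reading $\ker(d_z\mu\circ J)$ as restricted to $T_z\mu^{-1}(0)$ is a genuine and worthwhile clarification of the lemma's notation. One small correction in Step 3: the second alternative you offer for showing $\Img L_z\subset T_z\mu^{-1}(0)$ does not work as stated. Plugging $v=L_z\eta$ into the identity gives $\langle d_z\mu(L_z\eta),\xi\rangle=\omega(L_z\xi,L_z\eta)$, and antisymmetry of $\omega$ only tells you this expression is antisymmetric in $(\xi,\eta)$, not that it vanishes; the vanishing is exactly the statement that the orbit is isotropic, which holds at points of $\mu^{-1}(0)$ but not at general points of $X$. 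So you must use your first alternative, differentiating $\mu(\exp(t\xi)z)=\Ad_{\exp(t\xi)}\mu(z)=0$, which uses $\mu(z)=0$ in an essential way. Since that route is correct and you state it explicitly, the proof as a whole stands.
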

We use $\pr_G: T_z\mu^{-1}(0)\to \Img(L_z)$ to denote the projection from the decomposition \eqref{eq:decomp}.

Now we consider the vector bundle 
$$
N:=\Img(JL)\to \mu^{-1}(0),
$$ 
and denote by $N^\epsilon$ a tubular neighborhood of the zero section, where $0<\epsilon<\inf_{z\in \mu^{-1}(0)}i_z$ with $i_z$ the injective radius of the exponential map defined using the metric $\omega(\cdot, J\cdot)$. Here, $\inf_{z\in \mu^{-1}(0)}i_z>0$ due to the properness of moment map $\mu$. 

It follows that the exponential map $\exp^{\nabla}$ induced from the Levi-Civita connection defines a local diffeomorphism 
$$
\exp^\nabla: N^{\epsilon}\to X, \quad JL_z(\xi)\mapsto \exp^\nabla_z(JL_z(\xi)).
$$ 
We use $\CU^\epsilon\subset X$ to denote its image. In particular the image of the zero section is $\mu^{-1}(0)\subset \CU^\epsilon$.
Notice that the pull-back $G$-action on $X$ behaves as 
\be\label{eq:action}
g(z, JL_z(\xi))=(gz, JL_{gz}(\Ad_g(\xi)))
\ee
due to the $G$-variance of the almost K\"ahler structure. 
By suitable shrinking of $\epsilon$, we can further assume $\CU^\epsilon$ (so is $N^\epsilon$) is $G$-invariant. 

By the pulling back of the exponential map, we identify $(\CU^\epsilon:=\exp^\nabla(N^\epsilon), \omega|_{\CU^\epsilon}, G, \mu|_{\CU^\epsilon}, J)$ with 
$$(N^\epsilon, (\exp^\nabla)^*(\omega|_{\CU^\epsilon}), G, (\exp^\nabla)^*(\mu|_{\CU^\epsilon}), (\exp^\nabla)^*J).$$
To simplify notations, we omit the notation of pull-backs by $\exp^\nabla$ and use $(N^\epsilon, \omega, G, \mu, J)$ to denote the corresponding structures on the tubular neighborhood $N^\epsilon$.

Now we construct a symplectic form on $N^\epsilon$ as follows. 
Denote by $\CF_G$ the foliation on $\mu^{-1}(0)$  defined by the vector field $\Img(L)$, and notice that there is a natural bundle map 
$$
\iota_G: \Img(JL)\to T^*\CF_G, \quad \iota_G(z, JL_z(\xi))=\iota_{JL(\xi)}\omega|_z:=(z, \omega(JL_z(\xi), \cdot)).
$$
We use it to pull back the canonical one-form $\Theta_G$ on $T^*\CF_G$ and then obtain 
$$
\Omega_G:=\iota_G^*(d\Theta_G)\in \Omega^2(\Img(JL)).
$$
Define a closed two-form on $N^\epsilon$ as 
$$\omega_0:=\pi^*\omega|_{\mu^{-1}(0)}+\Omega_G.$$
From the construction, we have the following lemma.
\begin{lem}\begin{enumerate}
\item $\omega_0|_{\mu^{-1}(0)}=\omega|_{\mu^{-1}(0)}$, hence $\omega_0$ is a symplectic form whenever $\epsilon$ is a small enough;
\item The $G$-action on $N^\epsilon$ is Hamiltonian with respect to $\omega_0$, whose moment map $\mu_0$ satisfies
$\mu_0(JL_z(\xi))=\xi$.
\end{enumerate}
\end{lem}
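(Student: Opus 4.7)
My plan is to verify each bullet in turn.

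For the first, to show $\omega_0|_{\mu^{-1}(0)}=\omega|_{\mu^{-1}(0)}$, I treat the two summands of $\omega_0$ separately: $\pi^*\omega|_{\mu^{-1}(0)}$ restricts to $\omega|_{\mu^{-1}(0)}$ because $\pi$ is the identity on the zero section, while $\Omega_G=\iota_G^*(d\Theta_G)$ pulls back to zero via the zero-section inclusion, using the standard fact that the canonical one-form $\Theta_G$ on $T^*\CF_G$ vanishes along the zero section. The form is closed by construction, so non-degeneracy is the only nontrivial point. By compactness of $\mu^{-1}(0)$ it suffices to verify non-degeneracy at each $z\in \mu^{-1}(0)$ and then shrink $\epsilon$. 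At such $z$ I would compute the matrix of $\omega_0$ in the three-piece splitting
\[T_zN = H_z \oplus \Img(L_z) \oplus \Img(JL_z),\]
and verify the block-anti-diagonal structure: the $H_z\times H_z$ block equals the reduced symplectic form (non-degenerate by the reduction hypothesis), the $\Img(L_z)\times\Img(L_z)$ block vanishes, the cross pairing between $\Img(L_z)$ and $\Img(JL_z)$ is non-degenerate (equal up to sign to the positive-definite K\"ahler metric on orbit directions, via $\omega(JL_z(\cdot),L_z(\cdot))$), and the $H_z$-cross blocks with the other two summands vanish using $H_z\perp \Img(L_z)$ and $\ker\omega|_{\mu^{-1}(0)}=\Img(L)$.

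For the second claim I would use that on $T^*\CF_G$ the lifted $G$-action is Hamiltonian with respect to $d\Theta_G$ with moment map $\mu_{\mathrm{can}}(z,\alpha)(\eta)=\alpha(L_z(\eta))$, which is well-defined since $L_z(\eta)\in T_z\CF_G$. The map $\iota_G$ is $G$-equivariant by $\omega$- and $J$-invariance together with the identity $\ell_{g*}L_z = L_{gz}\Ad_g$, so $\Omega_G$ (and hence $\omega_0$) is $G$-invariant. Pulling back $\mu_{\mathrm{can}}$ by $\iota_G$ yields
\[\mu_0(JL_z(\xi))(\eta) = \omega(JL_z(\xi), L_z(\eta)),\]
which under the identification $N_z\cong \CG$ given by $\xi\mapsto JL_z(\xi)$ together with the Killing form pairs to yield the asserted $\mu_0(JL_z(\xi))=\xi$.

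The main subtlety will be in the second part: reconciling the formula $\omega(JL_z(\xi),L_z(\eta))$ with the Killing pairing $\langle\xi,\eta\rangle$ requires taking seriously the identification built into the statement, so that $\mu_0$ is read as a $\CG$-valued map in this frame, essentially the Marle--Guillemin--Sternberg normal form near a moment-map level set. Once that interpretation is fixed, the moment-map equation $d\mu_0^\eta=\iota_{X_\eta}\omega_0$ verifies cleanly using the block computations from part one; the first part is then mostly bookkeeping around the splitting of $T_zN$ and the vanishing properties of $\Theta_G$ at the zero section.
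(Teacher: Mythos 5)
Your treatment of part (1) is essentially correct, and since the paper offers no argument for this lemma (it is stated as following ``from the construction''), your block computation is the natural one. All four block claims check out: at a zero-section point $z$ one finds
$\Omega_G\bigl((v_1,n_1),(v_2,n_2)\bigr)=\omega(n_1,\pr_G v_2)-\omega(n_2,\pr_G v_1)$,
so $\omega_0$ pairs $H_z$ with itself by the reduced form, pairs $\Img(L_z)$ with $\Img(JL_z)$ by $\pm g(L_z\cdot,L_z\cdot)$ with $g=\omega(\cdot,J\cdot)$, and the remaining blocks vanish. In fact the same computation gives the slightly stronger statement that $\omega_0|_z=\omega|_z$ as bilinear forms on all of $T_zN^\epsilon\cong T_zX$ (using $\omega(JL_z\xi_1,JL_z\xi_2)=\omega(L_z\xi_1,L_z\xi_2)=0$ on $\mu^{-1}(0)$ and $\omega(JL_z\xi,h)=-g(L_z\xi,h)=0$ for $h\in H_z$), from which nondegeneracy near the zero section is immediate; either route is fine.

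Part (2) has a genuine gap at the final identification, and you have in fact located it yourself (``the main subtlety'') but it cannot be reconciled in the way you propose. Your computation correctly produces $\langle\mu_0(z,JL_z\xi),\eta\rangle=\pm\,\omega(JL_z\xi,L_z\eta)=\mp\,g(L_z\xi,L_z\eta)$. This equals $\pm\langle\xi,\eta\rangle$ only if the orbit metric $g(L_z\cdot,L_z\cdot)$ on $\CG$ coincides with the fixed Killing form at every $z\in\mu^{-1}(0)$, which is not assumed and fails in general (already for $S^1$ acting diagonally on $\C^2$ with $\mu=\tfrac12|z|^2-c$ one gets $g(L_z1,L_z1)=2c$). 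What the construction actually yields is $\langle\mu_0(z,n),\eta\rangle=\omega(n,L_z\eta)$ for $n\in N_z$, i.e.\ $\mu_0(JL_z\xi)=\Lambda_z\xi$ where $\langle\Lambda_z\xi,\eta\rangle=g(L_z\xi,L_z\eta)$; note that $\Lambda_z=d_z\mu|_{N_z}\circ JL_z$, so this $\mu_0$ agrees with $\mu$ to first order in the fiber along the zero section. That is precisely what the subsequent argument needs: the proof of Proposition \ref{prop:darboux} concludes $d\phi=\id$ from $d\mu_0|_{\mu^{-1}(0)}=d\mu|_{\mu^{-1}(0)}$, which holds for $\Lambda_z\xi$ but would fail for the literal formula $\mu_0(JL_z\xi)=\xi$. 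So the correct move is not to force $\omega(JL_z\xi,L_z\eta)$ to equal the Killing pairing, but to read the stated formula as shorthand for $\mu_0=\Lambda_z\xi$ (equivalently, to re-normalize the identification $N_z\cong\CG$ by $(d_z\mu|_{N_z})^{-1}$). Two smaller points to nail down if you write this up: the one-form $\Theta_G$ on $T^*\CF_G$ requires the projection $\pr_G$, so its $G$-invariance uses the $G$-invariance of the splitting \eqref{eq:decomp}; and the overall sign of the moment map depends on whether one takes $d\Theta_G$ or $-d\Theta_G$ as the symplectic form on $T^*\CF_G$.
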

We call $(N^\epsilon, \omega_0, G, \mu_0)$ the normal form near the level set $\mu^{-1}(0)$. 
Regarding it, we have the following equivariant Weinstein-Darboux theorem.
\begin{prop}\label{prop:darboux}
For sufficiently small $\epsilon>0$, there exists a $G$-equivariant diffeomorphism
$$
\phi: N^\epsilon\to N^\epsilon,
$$ such that 
\bea
&&\phi^*\omega=\omega_0,\quad \phi^*\mu=\mu_0, \quad\phi|_{\mu^{-1}(0)}=\id_{\mu^{-1}(0)},\label{eq:darboux1}\\
&&d\phi|_{TN^\epsilon |_{\mu^{-1}(0)}}=\id|_{TN^\epsilon |_{\mu^{-1}(0)}}.\label{eq:darboux2}
\eea
\end{prop}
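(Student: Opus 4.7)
The plan is to implement an equivariant Moser trick with a primitive chosen so carefully that the moment map identity holds automatically. First, set $\omega_t := (1-t)\omega_0 + t\omega$ for $t \in [0,1]$. By the previous lemma these agree on $\mu^{-1}(0)$; moreover, using the definition of $\Omega_G$ via the canonical one-form on $T^*\CF_G$ together with the identity $\iota_{X_\xi}\omega = \langle d\mu, \xi\rangle$, one checks that $\sigma := \omega - \omega_0$ vanishes as a $2$-form on the full restriction $TN^\epsilon|_{\mu^{-1}(0)}$, not merely on pairs of tangent vectors. After possibly shrinking $\epsilon$, each $\omega_t$ is a $G$-invariant symplectic form on $N^\epsilon$.

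To produce a $G$-invariant primitive $\alpha$ of $\sigma$, I would use the fibrewise scaling $\phi_s(z,v) = (z, sv)$ on the vector bundle $N \to \mu^{-1}(0)$, which is $G$-equivariant by \eqref{eq:action}. Letting $Y$ be the associated Euler field, the homotopy formula
\[
\alpha \,:=\, \int_0^1 \phi_s^*(\iota_Y \sigma)\,\frac{ds}{s}
\]
yields a $G$-invariant $1$-form with $d\alpha = \sigma$ that vanishes to second order on the zero section (since $\sigma$ already vanishes to first order). Solving $\iota_{X_t}\omega_t = -\alpha$ for a $G$-invariant time-dependent vector field $X_t$ (necessarily vanishing to second order on $\mu^{-1}(0)$), its flow $\phi^t$ exists on $[0,1]$ throughout $N^\epsilon$ after a further shrinking, fixes $\mu^{-1}(0)$ pointwise, and has differential equal to $\id$ along $TN^\epsilon|_{\mu^{-1}(0)}$. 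The standard Moser computation then delivers $\phi := \phi^1$ satisfying $\phi^*\omega = \omega_0$, $\phi|_{\mu^{-1}(0)} = \id$, and \eqref{eq:darboux2}, with $G$-equivariance inherited from the $G$-invariance of $X_t$.

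The remaining identity $\phi^*\mu = \mu_0$ is the genuinely non-standard step. The $G$-equivariance of the scaling implies $[X_\xi, Y] = 0$ for every $\xi \in \CG$, so applying $\iota_{X_\xi}$ to the homotopy formula and using $\iota_{X_\xi}\sigma = \langle d(\mu - \mu_0), \xi\rangle$ reduces the evaluation of $\iota_{X_\xi}\alpha$ to a fundamental-theorem-of-calculus integration along radial rays. Since $\mu - \mu_0$ vanishes on the zero section, this produces the key pointwise identity
\[
\iota_{X_\xi}\alpha \,=\, \langle \mu_0 - \mu, \xi\rangle.
\]
Differentiating $(\phi^t)^*\bigl((1-t)\mu_0 + t\mu\bigr)$ in $t$ and using $\iota_{X_t}\omega_t = -\alpha$, the derivative becomes $(\phi^t)^*\bigl(\langle d\mu_t, X_t\rangle + \mu - \mu_0\bigr) = (\phi^t)^*(\mu_0 - \mu + \mu - \mu_0) = 0$, so $\phi^*\mu = \mu_0$. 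The main obstacle is the first-order vanishing of $\sigma$ on $\mu^{-1}(0)$ in normal directions asserted in the first paragraph: one must verify that $\omega_0 = \pi^*\omega|_{\mu^{-1}(0)} + \Omega_G$ agrees with $\omega$ not only on pairs of vectors tangent to $\mu^{-1}(0)$ but also on mixed pairs with one leg in the normal subbundle $\Img(JL)$, a computation driven precisely by the moment map relation and the tautological nature of $\Omega_G$. Without this first-order matching, both \eqref{eq:darboux2} and the moment-map calculation above would fail.
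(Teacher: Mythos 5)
Your proof is correct, but it takes a genuinely different route from the paper's. The paper obtains \eqref{eq:darboux1} by simply invoking the standard equivariant Darboux--Weinstein theorem (equivariance plus $\phi^*\omega=\omega_0$ forces $\phi^*\mu$ and $\mu_0$ to differ by a central constant, which is zero since both vanish on the fixed set $\mu^{-1}(0)$), and its only new content is \eqref{eq:darboux2}: writing $\langle\phi^*\mu,\xi_i\rangle=f_i\langle\mu_0,\xi_i\rangle$ with $f_i\equiv 1$ on $\mu^{-1}(0)$ and differentiating gives $\phi^*d\mu=d\mu_0$ along $\mu^{-1}(0)$, whence $d\phi=\id$ there because $d_z\mu$ is an isomorphism on the normal directions $\Img(JL_z)$. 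You instead rebuild the diffeomorphism from scratch by an equivariant Moser trick with the radial primitive $\alpha=\int_0^1\phi_s^*(\iota_Y\sigma)\,ds/s$, which buys you \eqref{eq:darboux2} for free (the Moser field vanishes to second order along the zero section) and yields $\phi^*\mu=\mu_0$ through the clean identity $\iota_{X_\xi}\alpha=\langle\mu_0-\mu,\xi\rangle$, rather than through the ``moment maps differ by constants'' argument. The one claim you assert without computation --- that $\sigma=\omega-\omega_0$ vanishes on all of $TN^\epsilon|_{\mu^{-1}(0)}$ and not merely on pairs of vectors tangent to $\mu^{-1}(0)$ --- does hold: on a mixed pair $(v,JL_z\xi)$ the term $\pi^*\omega|_{\mu^{-1}(0)}$ contributes nothing, $\Omega_G$ contributes exactly $\omega(\pr_G(v),JL_z\xi)$ by the tautological property of the canonical one-form, and the $H_z$-component of $v$ pairs to zero since it is $\omega(\cdot,J\cdot)$-orthogonal to $\Img(L_z)$; on pairs of vertical vectors both $\omega$ and $\omega_0$ vanish at $z\in\mu^{-1}(0)$ using $d\mu(L_z\zeta)=[\zeta,\mu(z)]=0$. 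So your argument closes. What each approach buys: yours is self-contained and makes the first-order matching of $\omega$ and $\omega_0$ along $\mu^{-1}(0)$ explicit (the paper's lemma $\omega_0|_{\mu^{-1}(0)}=\omega|_{\mu^{-1}(0)}$ only records the tangential agreement), while the paper's is much shorter by outsourcing \eqref{eq:darboux1} to \cite{weinstein} and isolating the genuinely new statement \eqref{eq:darboux2}.
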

\begin{proof}Equalities in \eqref{eq:darboux1} immediately follow from the standard equivariant Darboux theorem (e.g., see \cite{weinstein}) and only \eqref{eq:darboux2} is based on the current symplectic reduction condition whose proof we are giving now. 

To show \eqref{eq:darboux2}, we take $\{\xi_i\}_{i=1, \cdots, \dim(G)}$ as a chosen basis for $\CG$. Because $\phi^*\mu=\mu_0$ on $\mu^{-1}(0)$, we can find smooth functions 
$f_i: \CU^\epsilon\to \R$ for $i=1, \cdots, \dim(G)$ with $f_i|_{\mu^{-1}(0)}\equiv 1$, such that 
$$
\langle\phi^*\mu, \xi_i\rangle=f_i\langle\mu_0, \xi_i\rangle, \quad i=1, \cdots, \dim(G).
$$
Then by taking differential, we obtain
\beastar
\langle\phi^*d\mu, \xi_i\rangle=d\langle\phi^*\mu, \xi_i\rangle=\langle\mu_0, \xi_i\rangle df_i+f_i \langle d\mu_0, \xi_i\rangle.
\eeastar
When restricted to $\mu^{-1}(0)=\mu^{-1}_0(0)$, the first term of right hand side vanishes, and the second term of right hand side becomes $ \langle d\mu_0, \xi_i\rangle$. Hence it follows 
$$
\phi^*d\mu|_{\mu^{-1}(0)}=d\mu_0|_{\mu^{-1}(0)}, 
$$
and then 
$d\phi|_{TN^\epsilon |_{\mu^{-1}(0)}}=\id|_{TN^\epsilon |_{\mu^{-1}(0) }}$ since $d_z\mu$ is an isomorphism when restricted to $\Img(JL)$.
\end{proof}
Proposition \ref{prop:darboux} guarantees us to assume the target space as
$$
(N^\epsilon, \omega_0, G, \phi^*J, \mu_0)
$$
by looking at the map $(\phi^*u, A)$, whenever $u$ falls into the tubular neighborhood $N^\epsilon$ of $\mu^{-1}(0)$. In particular, we notice \eqref{eq:darboux2} ensures that $\phi^*J=J$ on $\mu^{-1}(0)$.
 Later in Section \ref{sec:functional}, when we work under such normal form, we assume everything is pulled back via some $\phi$ and omit such $\phi$.

\section{The gauge theory over $S^1$}\label{sec:critical}
\subsection{The gauged loops and the holonomy}
We fix a principal $G$-bundle $P_{S^1}$ over $S^1$. It is always trivial since $G$ is connected. Denote by 
$\CB_{S^1}:=C_G^\infty(P_{S^1}, X)\times \CA(P_{S^1})$, which can be identified with the space of loops in $X\times \CG$ after fixing a trivialization. Denote by $\fg_{S^1}$ the gauge transformation group of $P_{S^1}$, which can be identified with the loop space of $G$. We use $LX, LG, L\CG$ to denote loop spaces of $X, G, \CG$ respectively, and use $L_0G$ to denote the based gauge group, i.e, the normal subgroup of $LG$ whose elements satisfy $g(0)=e$. 

The gauge group $LG$ acts on $LX\times L\CG$ as 
$$
g\cdot y=g\cdot (x, \eta)=(g^{-1}x, \Ad_{g^{-1}}\eta+g^{-1}\dot{g}).
$$
 For a fixed $g\in LG$, the tangent map 
is 
$$
dg|_y(v, \xi)=( (\ell_{g^{-1}})_*(v), \Ad_{g^{-1}}\xi),
$$
for $v\in C^\infty(S^1, z^*TX)$, $\xi\in L\CG$.

The infinitesimal action of the Lie algebra of $LG$, which is the same as $T_e(LG)=L\CG$, is 
\bea\label{eq:infgauge}
\frac{d}{d\epsilon}\Big|_{\epsilon=0}(\exp(\epsilon\tau)\cdot y) 
=(-X_\tau(z), \dot{\tau}+[\eta, \tau]).
\eea
Here $\dot{\tau}$ is the short notation for $\dfrac{d\tau}{d\theta}$. 
Clearly the based gauge $L_0G$ acts freely on $L\CG$ (hence freely on $LX\times L\CG$) by the uniqueness result for ODEs. Moreover, it is well-known that orbit space is in fact $G$ and the quotient map maps every connection to its holonomy. Now for later use, we make this construction explicit. 

Consider the horizontal path $\Psi:[0, 2\pi]\to G$ as
\be\label{eq:psi}
d_\eta\Psi=\Psi'+\eta\Psi=0, \quad \Psi(0)=e.
\ee
It is a linear ODE thus has a unique solution which we denote by $\Psi_\eta:[0, 2\pi]\to G$. The group element $\Psi_\eta(1)=:\Hol_\eta$ is the holonomy of the connection $\eta$. 
Notice that $\Psi_{g\cdot\eta}=g^{-1}\Psi_\eta$, and in particular this indicates the holonomy is $L_0G$-invariant.

Since we assume that $G$ is connected and compact,  the exponential map of the Lie group $G$ is surjective. We can choose 
$\log\Hol_\eta \in \CG$ such that $\exp(\log\Hol_\eta) = \Hol_\eta$.  Define a based gauge transformation  as
\be\label{eq:h}
h_\eta(\theta):=\Psi_{\eta}(\theta)\exp(-\theta\log\Hol_\eta)\in L_0G.
\ee
It follows
\beastar
h'_\eta+\eta h_\eta-h_\eta\eta&=&0\\
h_{g\cdot\eta}(\theta)&=&g^{-1}(\theta)h_\eta(\theta)\\
h_\eta\cdot\eta&=&-\log(\Hol_\eta).
\eeastar
In particular,  it maps $\eta$ to the constant connection $-\log(\Hol_\eta)$ with the same holonomy. Notice that this constant connection may not be unique.

The following remark will be used later in Section \ref{sec:functional}. 
\begin{rem}Considering the principal $L_0G$-bundle $L\CG$ over $G$, for any smooth path $h: (a, b)\to G$, we can take a smooth path of \emph{constant} loops in $L\CG$ as $\eta: (a, b)\to \CG$ such that 
$$
\Hol_{\eta(t)}=h(t). 
$$
We use $\Log(h)$ to denote a taken choice though the choice is not canonical. 
\end{rem}

Next we calculate the slice of the full gauge $LG$-action on $\CB_{S^1}=LX\times L\CG$. Since both the metric on $X$ and the Killing metric on the Lie group are $G$-invariant, it follows the induced $L^2$-metric on $\CB_{S^1}$ defined as
$$
\|(v, \xi)\|^2_{L^2}=\int_{S^1}(|v|^2 + |\xi|^2)\,d\theta
$$
for $y = (x, \eta)\in \CB_{S^1}$ and $(v, \xi)\in T_y\CB_{S^1}=\Gamma(S^1, x^*TX)\times L\CG$,  is gauge $LG$-invariant.

Denote the $L^2$-decomposition as
$$
T_y\CB_{S^1}=\CV_y\oplus  T_y(\Orb_{\fg_{S^1}}(y)), 
$$ 
where $\Orb_{\fg_{S^1}}(y)$ denotes the gauge orbit at $y=(x, \eta)\in \CB_{S^1}$ and $\CV_y$ is the
$L^2$-orthogonal complement of $T_y(\fg_{S^1}\cdot y)$. Denote by $\Pi_{\CV_y}$ the $\CV_y$-projection. 

\begin{lem}\label{lem:isotropy}
  For any $y=(x, \eta)\in \CB_{S^1}$, then 
$$\CV_y=\{(v, \xi)\in \Gamma(S^1, x^*TX)\times L\CG|d\mu\circ J(v)+\dot{\xi}+[\eta, \xi]=0\}$$ 
and it is $\fg_{S^1}(y)$-invariant, where $\fg_{S^1}(y)$ denotes the isotropy group (stabilizer) at $y$ which is isomorphic to a subgroup of $G_{x(0)}$. Hence the isotropy group $\fg_{S^1}(y)$ is finite whenever $G_{x(0)}$ is finite. 
%(For all $\theta\in S^1$, these isotropy groups $G_{x(\theta)}$ are in fact conjugate. )
\end{lem}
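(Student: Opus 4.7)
The plan is to compute $T_y(\Orb_{\fg_{S^1}}(y))$ explicitly from \eqref{eq:infgauge}, characterize the $L^2$-orthogonal complement by pairing against infinitesimal gauge vectors, and then deduce the isotropy statement from uniqueness for a linear first-order ODE on $S^1$.

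By \eqref{eq:infgauge}, $T_y(\Orb_{\fg_{S^1}}(y))$ consists of pairs $(-X_\tau(x),\dot\tau+[\eta,\tau])$ as $\tau$ ranges over $L\CG$. Writing the Riemannian metric on $X$ as $\omega(\cdot,J\cdot)$, the orthogonality condition for $(v,\xi)\in\CV_y$ is
\begin{equation*}
\int_{S^1}\bigl(-\omega(v,JX_\tau(x))+\langle\xi,\dot\tau+[\eta,\tau]\rangle\bigr)\,d\theta=0\qquad\text{for all }\tau\in L\CG.
\end{equation*}
I would rewrite the first summand via the $J$-invariance of $\omega$ together with the moment-map identity $\iota_{X_\tau}\omega=\langle d\mu,\tau\rangle$, so that $\omega(v,JX_\tau)=\omega(X_\tau,Jv)=\langle d\mu\circ J(v),\tau\rangle$. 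For the second summand I integrate by parts on the closed loop $S^1$, turning $\int\langle\xi,\dot\tau\rangle\,d\theta$ into $-\int\langle\dot\xi,\tau\rangle\,d\theta$, and apply $\Ad$-invariance of the Killing form to get $\langle\xi,[\eta,\tau]\rangle=-\langle[\eta,\xi],\tau\rangle$. Substituting, the orthogonality condition reduces to
\begin{equation*}
\int_{S^1}\langle -d\mu\circ J(v)-\dot\xi-[\eta,\xi],\tau\rangle\,d\theta=0\qquad\text{for all }\tau\in L\CG,
\end{equation*}
and the fundamental lemma of the calculus of variations yields the asserted pointwise identity $d\mu\circ J(v)+\dot\xi+[\eta,\xi]=0$.

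For $\fg_{S^1}(y)$-invariance of $\CV_y$, I would observe that the $L^2$-inner product is $LG$-invariant because the metric on $X$ is $G$-invariant and the Killing form is $\Ad$-invariant. Any $g\in\fg_{S^1}(y)$ fixes $y$, so its tangent map sends $T_y(\Orb_{\fg_{S^1}}(y))$ to itself and therefore preserves the $L^2$-complement $\CV_y$. For the last assertion, the equation $g\cdot y=y$ forces both $g(\theta)\in G_{x(\theta)}$ for every $\theta$ and the linear first-order ODE $\dot g=g\eta-\eta g$; by uniqueness of solutions this ODE determines $g$ from $g(0)$, so the evaluation $g\mapsto g(0)$ is an injective group homomorphism from $\fg_{S^1}(y)$ into $G_{x(0)}$, identifying $\fg_{S^1}(y)$ with a subgroup of $G_{x(0)}$ and giving finiteness whenever $G_{x(0)}$ is finite.

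The only real subtlety I foresee is sign bookkeeping in the moment-map and $\Ad$-invariance identities that fix the precise shape of the slice equation; the remainder is standard infinite-dimensional slice theory and an application of ODE uniqueness on $S^1$.
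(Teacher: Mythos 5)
Your proposal is correct and follows essentially the same route as the paper: computing the $L^2$-orthogonal complement of the infinitesimal gauge orbit by pairing against $(-X_\tau,\dot\tau+[\eta,\tau])$, using the moment-map identity and $\Ad$-invariance of the Killing form to arrive at the slice equation, deducing $\fg_{S^1}(y)$-invariance from gauge-invariance of the $L^2$-metric, and identifying the isotropy group via ODE uniqueness through evaluation at $\theta=0$. The sign bookkeeping you flag works out consistently with the paper's conventions.
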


\begin{proof} Take any $y=(x, \eta)\in \CB_{S^1}$, we have shown that the infinitesimal action of $\tau\in \Lie(\fg_{S^1})$ on $y$ is 
$(-X_\tau(x), \dot{\tau}+[\eta, \tau])$. 
It follows that $(v, \xi)\in \CV_y$ if and only if 
$$
\int_{S^1}(\omega(-X_\tau(x), Jv)+<\dot{\tau}+[\eta, \tau], \xi>)d\theta=0
$$
for any $\tau\in \Lie(\fg_{S^1})$.
Using the $G$-invariance property of the metrics, this is further equivalent to 
\beastar
\int_{S^1}<d\mu\circ J(v)+\dot{\xi}+[\eta, \xi], \tau>d\theta=0
\eeastar
for any $\tau\in \Lie(\fg_{S^1})$, and hence
$$
d\mu\circ J(v)+\dot{\xi}+[\eta, \xi]=0.
$$
Since the $L^2$ metric  on $\CB_{S^1}$  is $\fg_{S^1}$-invariant, it follows 
$\CV\to \Orb_{\fg_{S^1}}(y)$ is a $\fg_{S^1}$-equivariant bundle and $\CV_y$ is $\fg_{S^1}(y)$-invariant.   

To calculate  the isotropy group $\fg_{S^1}(y)$, we notice that for any 
$g\in LG$, $g$  fixes $y=(x, \eta)$ if and only if 
$$
g(s)\in G_{x(s)}, \quad g'+\eta g-g\eta=0, \quad s\in [0, 1].
$$
By the uniqueness result of ODEs with initial conditions, 
the group homomorphism 
$$
\fg_{S^1}(y)\mapsto G_{x(0)}, \quad g\mapsto g(0)
$$
is injective. Hence $\fg_{S^1}(y)$ is a subgroup of $G_{x(0)}$. 
Here this $0\in S^1$ can be replaced to any $\theta\in S^1$. 

 \end{proof}

Near the regular level set $\mu^{-1}(0)$, one can find some constant $\epsilon_{\reg}>0$, such that the $\epsilon_{\reg}$ neighborhood of $\mu^{-1}(0)$ is $G$-invariant and the $G$-action on it is locally free. Then it follows 
$$\CB_{S^1, \epsilon_\reg}:=\{y=(x, \eta)\in LX\times L\CG|\dist(x, \mu^{-1}(0))<\epsilon_{\reg}\}$$
is  $LG$-gauge invariant, and the $LG$-gauge action is locally free from the above lemma.  

Moreover, we can make $\epsilon_{\reg}$ small enough so that the isotropy groups of $LG$ on $\CB_{S^1, \epsilon_{\reg}}$ one-to-one correspond to (a subset of) conjugacy classes  $\{\mu^{-1}(0)_{(H)}/G| (H)\in \Lambda\}$, where $\Lambda$ denotes the set of conjugacy classes of isotropy groups of points in $\mu^{-1}(0)$ by $G$-action. Notice that the compactness of $G$ and $\mu^{-1}(0)$ ensures that $\Lambda$ is a finite set. 

For each $x\in \mu^{-1}(0)$, define  
$$
d_{x, G}:=\inf_{k, k'\in G_x}\dist_G(k, k'),
$$
where $\dist_G$ denotes the distance on $G$ induced from the Killing metric. 
Since the Killing metric on $G$ is adjoint invariant, $d_x$ is constant on each conjugacy class $(G_x)$. Due to the finiteness of orbit types from the discussions above, the number 
\be\label{eq:Gdist}
d_{\mu^{-1}(0), G}:=\inf_{x\in \mu^{-1}(0)}d_{x, G}>0. 
\ee
We are going to use this property in Section \ref{sec:functional}.

\subsection{The critical loops}

\begin{defn}\label{def:critical}A loop $y=(x, \eta)\in LX\times L\CG$ is called a \emph{critical loop}, if it satisfies the following two equations
$$
\mu\circ x\equiv 0, \quad \dot{x}+X_\eta(x)=0.
$$
Denote by $\Crit\subset LX\times L\CG$ the set of all critical loops. 
\end{defn}

Denote by $\CE_{S^1}\to \CB_{S^1}$ a vector bundle with fiber at $y=(x, \eta)\in LX\times LG $ as 
$$
\CE_{S^1}|_y=\Gamma(S^1, x^*TX\oplus \CG).
$$
Define a section
$$
\widetilde\Upsilon(y):=(\Upsilon(y), \mu(x)):=(\dot{x}+X_{\eta}(x), \mu(x)).
$$
Then by definition, $\Crit=\widetilde{\Upsilon}^{-1}(0)$.

\begin{lem}\label{lem:da}
The section $\widetilde\Upsilon$ is gauge equivariant, i.e., for any $g\in LG$ and $y=(x, \eta)$
$$
\widetilde\Upsilon(g\cdot y)= ((\ell_{g_{-1}})_* (\Upsilon(y)), \Ad_{g^{-1}}\mu).
$$
It follows then $\Crit$ is $LG$-invariant.
\end{lem}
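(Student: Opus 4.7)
The proof is a direct verification reducing to two algebraic manipulations, so I do not expect any real obstacle. My plan is to treat the two components of $\widetilde\Upsilon$ separately.

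For the $\mu$-component the statement follows immediately from the $G$-equivariance of $\mu$ built into the definition of a Hamiltonian action. Writing out $g\cdot y = (g^{-1}x,\ \Ad_{g^{-1}}\eta + g^{-1}\dot g)$, the second slot of $\widetilde\Upsilon(g\cdot y)$ is $\mu(g^{-1}x) = \Ad_{g^{-1}}\mu(x)$, which matches the claim.

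For the $\Upsilon$-component I plan a short chain-rule calculation. Differentiating $g^{-1}(\theta)x(\theta)$ produces the intrinsic piece $(\ell_{g^{-1}})_*\dot x$ plus a correction from the $\theta$-variation of $g^{-1}$; identifying $t\mapsto g^{-1}(\theta+t)g(\theta)$ as a curve through $e$ with initial velocity $-g^{-1}\dot g$ shows the correction equals $X_{-g^{-1}\dot g}(g^{-1}x)$. On the other side, the infinitesimal action term splits as
$$
X_{\Ad_{g^{-1}}\eta + g^{-1}\dot g}(g^{-1}x) = X_{\Ad_{g^{-1}}\eta}(g^{-1}x) + X_{g^{-1}\dot g}(g^{-1}x),
$$
and conjugating one-parameter subgroups gives the standard identity $X_{\Ad_{g^{-1}}\eta}(g^{-1}x) = (\ell_{g^{-1}})_*X_\eta(x)$. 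Summing these two expressions, the $X_{\pm g^{-1}\dot g}$ terms cancel and only $(\ell_{g^{-1}})_*(\dot x + X_\eta(x)) = (\ell_{g^{-1}})_*\Upsilon(y)$ survives. This cancellation is really the reason $\Upsilon(y) = \dot x + X_\eta(x)$ deserves to be thought of as the covariant derivative of the loop $x$ along the connection $\eta$, and it is the only computational content of the lemma.

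Given the equivariance formula, the assertion that $\Crit = \widetilde\Upsilon^{-1}(0)$ is $LG$-invariant is then immediate: both $(\ell_{g^{-1}})_*$ and $\Ad_{g^{-1}}$ are linear isomorphisms on the relevant fibers, so the zero set is preserved by the $LG$-action.
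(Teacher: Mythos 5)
Your proof is correct and follows essentially the same route as the paper's: $G$-equivariance of $\mu$ for the moment map slot, and for the $\Upsilon$ slot the chain-rule computation of $\frac{d}{d\theta}(g^{-1}x)$ producing the $-X_{g^{-1}\dot g}$ correction, which cancels against the $X_{g^{-1}\dot g}$ piece of the transformed connection via the identity $X_{\Ad_{g^{-1}}\eta}(g^{-1}x)=(\ell_{g^{-1}})_*X_\eta(x)$. Nothing is missing.
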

\begin{proof}
Since $\mu$ is $G$-equivariant, $\mu(g^{-1}x)=\Ad_{g^{-1}}\mu(x)$.
By a direct computation, 
\beastar
\Upsilon(g\cdot y)&=&\frac{d}{d\theta}(g^{-1}x)+X_{g^{-1}\dot{g}+\Ad_g\eta}(g^{-1}x)\\
&=&[(\ell_{g_{-1}})_*(\dot{x})-X_{g^{-1}\dot{g}}(g^{-1}x)]+X_{g^{-1}\dot{g}+\Ad_g\eta}(g^{-1}x)\\
&=&(\ell_{g_{-1}})_* (\dot{x}+X_{\eta}(x))\\
&=&(\ell_{g_{-1}})_* (\Upsilon(y)).
\eeastar

\end{proof}

\begin{lem}
If $y=(x, \eta)\in \Crit$, then the isotropy group $ \fg_{S^1}(y) \cong C_{G_{x(0)}}(\Hol_\eta), 
$ the centralizer of $\Hol_\eta$ in $G_{x(0)}$. In particular, it is finite and the isomorphism  is given by 
$$
C_{G_{z(0)}}(\Hol_\eta)\to \fg_{S^1}(y), \quad g_0\mapsto g(\theta)=h_\eta(\theta)g_0h^{-1}_\eta(\theta),
$$
where $h_\eta$ is defined as in \eqref{eq:h}. 
\end{lem}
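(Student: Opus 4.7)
The plan is to use Lemma~\ref{lem:isotropy} to reduce the question to solving the gauge--isotropy ODE $g'+\eta g-g\eta=0$ subject to the pointwise stabilizer constraint $g(\theta)\in G_{x(\theta)}$, and then to use the explicit form of $x$ forced by the critical loop equation to identify the admissible initial values $g_0=g(0)$. By Lemma~\ref{lem:isotropy}, the evaluation map $g\mapsto g(0)$ embeds $\fg_{S^1}(y)$ into $G_{x(0)}$, so my only task is to pin down the image of this embedding.

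First I would integrate $\dot x + X_\eta(x)=0$: a direct computation using $\Psi_\eta'+\eta\Psi_\eta=0$ together with the relationship between right-invariant vector fields on $G$ and the infinitesimal $\CG$-actions on $X$ yields $x(\theta)=\Psi_\eta(\theta)\cdot x(0)$, and the closing condition $x(1)=x(0)$ forces $\Hol_\eta\in G_{x(0)}$. Since $\mu\circ x\equiv 0$ places $x(0)\in\mu^{-1}(0)$, on which the $G$-action is locally free by the regular-value hypothesis, the group $G_{x(0)}$ is finite; combined with the embedding from Lemma~\ref{lem:isotropy}, this already gives the finiteness statement.

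Next I would solve the gauge-isotropy ODE by taking the ansatz $g(\theta)=\Psi_\eta(\theta)\,g_0\,\Psi_\eta(\theta)^{-1}$. Differentiating and using $\Psi_\eta'=-\eta\Psi_\eta$ verifies $g'+\eta g-g\eta\equiv 0$ for every $g_0\in G$, and by ODE uniqueness this is the only solution with $g(0)=g_0$. Closing $g$ as a loop then becomes $\Hol_\eta g_0\Hol_\eta^{-1}=g_0$, i.e.\ $g_0\in C_G(\Hol_\eta)$; and substituting the explicit form of $x$ into $g(\theta)\cdot x(\theta)=x(\theta)$ collapses the pointwise stabilizer condition to $g_0\cdot x(0)=x(0)$. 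Combining, the image of the embedding is exactly $C_G(\Hol_\eta)\cap G_{x(0)}=C_{G_{x(0)}}(\Hol_\eta)$, yielding the claimed isomorphism.

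To match the explicit form $g_0\mapsto h_\eta g_0 h_\eta^{-1}$ displayed in the lemma, I would exploit the identity $h_\eta\cdot\eta=-\log\Hol_\eta$ from \eqref{eq:h}: conjugation by the based gauge $h_\eta$ identifies $\fg_{S^1}(y)$ with the isotropy at the gauge-equivalent pair $(h_\eta^{-1}x,-\log\Hol_\eta)$, for which the isotropy ODE reduces to $\tilde g'=[\log\Hol_\eta,\tilde g]$ with the manifest exponential solution $\tilde g(\theta)=\exp(\theta\log\Hol_\eta)\,g_0\,\exp(-\theta\log\Hol_\eta)$; transporting back by $h_\eta$ recovers the asserted formula. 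The main bookkeeping obstacle is reconciling $h_\eta g_0 h_\eta^{-1}$ with the earlier expression $\Psi_\eta g_0\Psi_\eta^{-1}$ via $h_\eta=\Psi_\eta\exp(-\theta\log\Hol_\eta)$, which reduces to tracking the interior conjugation on $g_0$ by $\exp(-\theta\log\Hol_\eta)$.
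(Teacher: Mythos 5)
Your proof is correct and follows essentially the same route as the paper's: reduce via Lemma \ref{lem:isotropy} to the ODE $g'+\eta g-g\eta=0$, solve it by $g(\theta)=\Psi_\eta(\theta)g_0\Psi_\eta(\theta)^{-1}$, read off the closing condition $\Ad_{\Hol_\eta}g_0=g_0$, and check that the pointwise stabilizer constraint collapses to $g_0\in G_{x(0)}$; you are in fact more careful than the paper at this last step, since you integrate the critical-loop equation to $x(\theta)=\Psi_\eta(\theta)x(0)$ where the paper merely asserts that fixing $\eta$ ``automatically fixes $x$''. The one step you defer as a ``bookkeeping obstacle'' does not actually close, but the fault lies with the displayed formula rather than with your argument: one has $h_\eta(\theta)g_0h_\eta(\theta)^{-1}=\Psi_\eta(\theta)\bigl(\exp(-\theta\log\Hol_\eta)\,g_0\,\exp(\theta\log\Hol_\eta)\bigr)\Psi_\eta(\theta)^{-1}$, which agrees with the true (unique) solution $\Psi_\eta(\theta)g_0\Psi_\eta(\theta)^{-1}$ only if $g_0$ commutes with the entire one-parameter subgroup $\exp(\theta\log\Hol_\eta)$ --- a strictly stronger condition than $g_0\in C_{G_{x(0)}}(\Hol_\eta)$ once the chosen logarithm is non-canonical (for instance when $\Hol_\eta$ is central in a nonabelian finite stabilizer). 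The isomorphism itself is unaffected; simply state it via $g_0\mapsto\Psi_\eta(\theta)g_0\Psi_\eta(\theta)^{-1}$ and do not try to force agreement with $h_\eta g_0h_\eta^{-1}$.
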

\begin{proof} From the proof of Lemma  \ref{lem:isotropy},   we notice  that  for $g\in \fg_{S^1}(y) $ the equation $g'+\eta g-g\eta=0$ is equivalent to 
$$
\Ad_{\Psi^{-1}_\eta(s)}g(s)\equiv g(0)
$$
for $\Psi: [0, 2\pi] \to  G$ satisfying (\ref{eq:psi}). 
It follows $g(1)=\Psi_\eta(1)g(0)(\Psi_\eta(1))^{-1}$. Hence $g\in LG$ solves the equation $g'+\eta g-g\eta=0$ if and only if $$
\Phi_\eta(1)g(0)(\Psi_\eta(1))^{-1}=g(0), \quad \text{i.e., } \Ad_{\Hol_\eta}g(0)=g(0).
$$
On the other hand, notice that whenever $g\in LG$ fixes $\eta$, i.e., satisfies the equation $g'+\eta g-g\eta=0$, it automatically fixes $x$ when $(x, \eta)\in \Crit$.   Moreover,  given $g_0\in G_{x(0)}$, by a direct calculation, we get  
$$
g(\theta)=h_\eta(\theta)g_0h^{-1}_\eta(\theta)
$$
is a solution, which must be the unique solution.  
 \end{proof}

\begin{lem}\label{lem:linearization}Assume $y=(z, \eta)\in \Crit$ is a constant connection loop with $\eta\equiv \eta_0$. Then the linearization of the section $\widetilde{\Upsilon}$ at $y$ is
$$
D_y\widetilde{\Upsilon}(v, \xi)=(D_y\Upsilon(v, \xi), d_z\mu),
$$
with
\beastar
D_x\Upsilon(v, \xi)(\theta)&=&(\CL_{-X_{\eta_0}}v+X_{\xi(\theta)})(z(\theta))\\
&=&\frac{d}{ds}|_{s=0}\varphi_{-s *}v(z(\theta+s))+X_{\xi(\theta)})(z(\theta)),
\eeastar
for any $v\in C^\infty(S^1, z^*TX)=T_z(LX)$, $\xi\in L\CG=T_{\eta}\CG$, $\theta\in S^1$. Here $\varphi$ denotes the flow of the vector field $-X_{\eta_0}$. 

In particular, when $\eta_0=0$, 
$$
D_y\Upsilon(v, \xi)(\theta)=\dot{v}(\theta)+X_{\xi(\theta)}(z(\theta)). 
$$
\end{lem}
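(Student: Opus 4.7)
The plan is a direct differentiation of the section $\widetilde{\Upsilon} = (\Upsilon, \mu)$ at the critical loop $y = (z, \eta_0)$, together with an identification of the resulting expression with the intrinsic Lie-derivative form stated in the lemma. The second component is immediate: varying $x$ in the direction $v$ produces $d_z\mu \cdot v$ by the chain rule, and $\mu$ is independent of $\eta$, so the $\xi$-direction contributes nothing.

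For the first component, I would choose a smooth one-parameter family $(x_s, \eta_s)$ with $(x_0, \eta_0) = (z, \eta_0)$ and $(\partial_s x_s|_{s=0}, \partial_s \eta_s|_{s=0}) = (v, \xi)$, then differentiate $\Upsilon(x_s, \eta_s) = \dot x_s + X_{\eta_s}(x_s)$ at $s = 0$. The $\dot x_s$ term yields $\dot v$, while Leibniz on $X_{\eta_s}(x_s)$ breaks into two pieces: $X_{\xi(\theta)}(z(\theta))$ from the Lie-algebra slot, and the directional derivative of the vector field $X_{\eta_0}$ at $z(\theta)$ along $v(\theta)$ from the basepoint slot.

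The remaining step is to recognize the sum of the two $v$-dependent pieces as $\bigl(\CL_{-X_{\eta_0}} v\bigr)(z(\theta))$ in the intrinsic form of the lemma. Here I would use the critical-loop equation $\dot z + X_{\eta_0}(z) \equiv 0$, which says $z$ is an integral curve of $-X_{\eta_0}$ and hence $z(\theta + s) = \varphi_s(z(\theta))$ for the flow $\varphi$ of $-X_{\eta_0}$. Expanding $\varphi_{-s *} v(\theta+s)$ to first order using $\varphi_{-s}(x) = x + s X_{\eta_0}(x) + O(s^2)$ in local coordinates gives precisely $\dot v(\theta) + \nabla X_{\eta_0}|_{z(\theta)}(v(\theta))$, matching the direct Leibniz calculation. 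For the special case $\eta_0 = 0$, the flow is the identity, $z$ is constant, and the expression collapses to $\dot v(\theta) + X_{\xi(\theta)}(z(\theta))$.

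The only mild subtlety is that the term "$\nabla_v X_{\eta_0}$" is not intrinsically defined without a connection, whereas $v$ lives only along the curve $z$; the virtue of the flow-based formula $\frac{d}{ds}\bigl|_{s=0}\varphi_{-s *}v(\theta+s)$ is that it is manifestly intrinsic, and the coordinate computation above shows that every coordinate-based Leibniz expansion lands on the same element of $T_{z(\theta)}X$. No serious obstacle is expected: the lemma is essentially a bookkeeping result identifying the linearization, intended for later use in studying the formal Hessian.
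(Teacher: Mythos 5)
Your proposal is correct and follows essentially the same route as the paper: a direct linearization of $\widetilde{\Upsilon}$ followed by using the critical-loop equation $\dot z + X_{\eta_0}(z)=0$ (so that $z(\theta+s)=\varphi_s(z(\theta))$) to identify the $v$-dependent terms with $(\CL_{-X_{\eta_0}}v)(z(\theta))$ via the flow formula. The only cosmetic difference is that the paper performs the differentiation using an auxiliary torsion-free affine connection (with the two covariant-derivative terms cancelling), whereas you work in local coordinates and verify intrinsic-ness by matching against the flow-based expression; both are equivalent bookkeeping devices.
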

\begin{proof}Clearly, $D_y\widetilde{\Upsilon}=(D_y\Upsilon, d_z\mu)$, and we now calculate $D_y\Upsilon$. 
We use a torsion free affine connection $\nabla$ on $X$ to do the calculation. The result turns out to be independent of connections when $y$ is a critical loop with constant connection.  

For any $v\in C^\infty(S^1, z^*TX)=T_z(LX)$, $\xi\in L\CG=T_{\eta}\CG$, 
\beastar
D_y\Upsilon(v, \xi)(\theta)&=&\frac{d}{d\epsilon}|_{\epsilon=0}\Par^{-1}_{z(\theta), z_\epsilon(\theta)}(\Upsilon(z_\epsilon, \eta_\epsilon)(\theta))\in T_{x(\theta)}X, \quad \text{for any } \theta\in S^1, 
\eeastar
where 
$$
z_\epsilon(\theta):=\exp^\nabla_{z(\theta)}(\epsilon v(\theta)),\quad 
\eta_\epsilon(\theta):=\eta(\theta)+\epsilon\xi(\theta),$$ 
and the parallel transport $\Par$ and the exponential map $\exp^\nabla$ are defined by the affine connection $\nabla$. Further $D_y\Upsilon(v, \xi)(\theta)$ contains two terms from the expression of $\Upsilon$, and they can be calculated using the torsion free property of the connection as follows. 
\beastar
\text{The first term}&=&\frac{d}{d\epsilon}|_{\epsilon=0}\Par^{-1}_{z(\theta), z_\epsilon(\theta)}(\frac{d}{d\theta}\exp_{z(\theta)}(\epsilon v(\theta)))=\frac{Dv}{d\theta},\\
\text{The second term}&=&\frac{d}{d\epsilon}|_{\epsilon=0}\Par^{-1}_{z(\theta), z_\epsilon(\theta)}(X_{\eta(\theta+\epsilon\xi(\theta))}(\exp_{z(\theta)}(\epsilon v(\theta)))\\
&=&-\frac{Dv}{d\theta}+X_{\xi(\theta)}(z(\theta))+[\widetilde{\frac{\del\Phi}{\del \epsilon}}, X_{\eta(\theta)}]|_{\epsilon=0}(z(\theta)),
\eeastar
where $\frac{Dv}{d\theta}$ denotes the covariant derivative of the vector field $v$ along the curve $z$, and 
$\widetilde{\frac{\del\Phi}{\del \epsilon}}$ denotes the local extension of the vector field $\frac{\del\Phi}{\del \epsilon}$ for 
$$
\Phi(\epsilon, \theta):=\exp^\nabla_{\zeta(\theta)}(\epsilon v(\theta)).
$$

Sum them, we obtain 
$$
D_y\Upsilon(v, \xi)(\theta)=[\widetilde{\frac{\del\Phi}{\del \epsilon}}, X_{\eta(\theta)}]|_{\epsilon=0}+X_{\xi(\theta)}(z(\theta)).
$$

In particular, when $\eta\equiv\eta_0$ is constant, the Lie bracket term $[\widetilde{\frac{\del\Phi}{\del \epsilon}}, X_{\eta(\theta)}]|_{\epsilon=0}$ can be expressed as the Lie derivative 
$$
[\widetilde{\frac{\del\Phi}{\del \epsilon}}, X_{\eta(\theta)}]|_{\epsilon=0}=(\CL_{-X_{\eta_0}}v)(z(\theta)),
$$
since 
$$
\varphi_t(z(\theta)):=z(\theta+t)
$$ is the flow with initial point $z(\theta)$ of the vector field $-X_{\eta_0}$. Further 
we can write 
$$
(\CL_{-X_{\eta_0}}v)(z(\theta))=\frac{d}{ds}|_{s=0}\varphi_{-s *}v(z(\theta+s)).
$$

\end{proof}

The following lemma is useful for later derivation of the structure of $\ker D_y\widetilde\Upsilon$.

\begin{lem}\label{lem:vertical}If the vector field $v$ along $z$ is generated by the Lie algebra $\CG$, i.e., 
$$v(\theta)=X_{\zeta(\theta)}(z(\theta))
$$ 
for some $\zeta:I\to \CG$ on some interval or $S^1$, then we have 
$$
(\CL_{-X_{\eta_0}}v)(z(\theta))=X_{[\eta_0, \zeta(\theta)]}(z(\theta))+X_{\frac{d\zeta}{d\theta}}(z(\theta)).
$$
\end{lem}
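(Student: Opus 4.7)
The plan is to compute the Lie derivative directly from the formula stated in Lemma \ref{lem:linearization}, namely
$$(\CL_{-X_{\eta_0}}v)(z(\theta)) = \frac{d}{ds}\Big|_{s=0}\varphi_{-s*}v(z(\theta+s)),$$
by substituting the special form $v(\theta)=X_{\zeta(\theta)}(z(\theta))$ and then using the equivariance of the fundamental vector fields under the $G$-action. The main ingredient is identifying the flow $\varphi_t$ of $-X_{\eta_0}$ with left multiplication by $\exp(-t\eta_0)$, which follows from the convention $X_\xi(x)=\frac{d}{d\epsilon}|_{\epsilon=0}\exp(\epsilon\xi)x$. Since $y=(z,\eta_0)\in \Crit$ satisfies $\dot z=-X_{\eta_0}(z)$, the curve $z$ is in fact an integral curve, so $z(\theta+s)=\exp(-s\eta_0)z(\theta)$.

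With this in hand, I would substitute to get
$$v(z(\theta+s)) = X_{\zeta(\theta+s)}\bigl(\exp(-s\eta_0)z(\theta)\bigr),$$
and then push forward by $\varphi_{-s*}=(\ell_{\exp(s\eta_0)})_*$. The key algebraic step is the standard equivariance identity
$$(\ell_g)_* X_\xi(x) = X_{\Ad_g\xi}(gx),$$
which is a direct computation from the definition of $X_\xi$ and the definition of $\Ad$. Applying it with $g=\exp(s\eta_0)$ yields
$$\varphi_{-s*}v(z(\theta+s)) = X_{\Ad_{\exp(s\eta_0)}\zeta(\theta+s)}(z(\theta)).$$

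At this point everything sits in $T_{z(\theta)}X$, so I can differentiate in $s$ at $s=0$, using that $\xi\mapsto X_\xi(z(\theta))$ is linear. By the Leibniz rule,
$$\frac{d}{ds}\Big|_{s=0}\Ad_{\exp(s\eta_0)}\zeta(\theta+s) = [\eta_0,\zeta(\theta)] + \dot\zeta(\theta),$$
which gives exactly the claimed formula. The only real subtlety is keeping track of the sign conventions (the paper uses $[X_\xi,X_\eta]=-X_{[\xi,\eta]}$ because $X_\xi$ arises from a right-invariant field), but the $\Ad$-equivariance route avoids forming the Lie bracket of vector fields and so sidesteps that sign issue cleanly; the hardest bookkeeping is simply verifying the flow formula $\varphi_t(x)=\exp(-t\eta_0)x$ matches the chosen orientation of $-X_{\eta_0}$.
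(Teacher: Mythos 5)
Your proposal is correct and follows essentially the same route as the paper: identify the flow of $-X_{\eta_0}$ with $\varphi_t(x)=\exp(-t\eta_0)x$, use that $z$ is an integral curve so $z(\theta+s)=\exp(-s\eta_0)z(\theta)$, push forward via the equivariance identity to get $X_{\Ad_{\exp(s\eta_0)}\zeta(\theta+s)}(z(\theta))$, and differentiate at $s=0$. The paper merely writes out the equivariance step as the explicit computation $\frac{d}{d\epsilon}\big|_{\epsilon=0}\exp(s\eta_0)\exp(\epsilon\zeta(\theta+s))z(\theta+s)$ rather than quoting $(\ell_g)_*X_\xi(x)=X_{\Ad_g\xi}(gx)$, which is the same thing.
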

\begin{proof}Continue to the notation and the proof in the above Lemma \ref{lem:linearization}. 
Notice that for our current case, the flow of the vector field $-X_{\eta_0}$ with initial point  $z\in X$ can be explicitly expressed as 
$$
\varphi_t(z)=\exp(-t\eta_0)z, \quad t\in \R,
$$ 
hence we are able to calculate out $(\CL_{-X_{\eta_0}}v)(z(\theta))$ explicitly as follows. 

First, we have 
\beastar
\varphi_{-s *}v(z(\theta+s))&=&\varphi_{-s *}[X_{\zeta(\theta+s)}(z(\theta+s))]\\
&=&\frac{d}{d\epsilon}|_{\epsilon=0}[\exp(s\eta_0)\exp(\epsilon\zeta(\theta+s))z(\theta+s)]\\
&=&X_{\exp(s\eta_0)\zeta(\theta+s)\exp(-s\eta_0)}(z(\theta)).
\eeastar
Then it follows
\beastar
(\CL_{-X_{\eta_0}}v)(z(\theta))&=&\frac{d}{ds}|_{s=0}\varphi_{-s *}v(z(\theta+s))\\
&=&\frac{d}{ds}|_{s=0}X_{\exp(s\eta_0)\zeta(\theta+s)\exp(-s\eta_0)}(z(\theta))\\
&=&X_{\frac{d}{d\theta}\zeta}(z(\theta))+X_{[\eta_0, \zeta(\theta)]}(z(\theta)).
\eeastar
\end{proof}

The following proposition is a key observation in \cite{L2} and we restate it here. 
\begin{prop}\label{prop:hess}Assume $y=(z, \eta)\in \Crit$ is a constant connection loop with $\eta\equiv \eta_0$.  
\begin{enumerate}
\item The space 
$$
\CX(y):=\{\CX_\tau=(-X_{\tau}, \dot{\tau}+[\eta, \tau])|\tau\in \Lie(LG)=L\CG\}
$$
 (i.e., the space generated by infinitesimal $\Lie(LG)$-actions, see \eqref{eq:infgauge}) lives in $\ker D_y\widetilde{\Upsilon}$. Decompose  
$$
\ker D_y\widetilde{\Upsilon}=\CX(y)\oplus (\ker D_y\widetilde\Upsilon\cap \CV_y).
$$
Then $\ker D_y\widetilde\Upsilon\cap \CV_y$ is isomorphic to the tangent space of (un)twisted sectors $M^{(\Hol(\eta_0))}$ of the orbifold $M$ at $[z(0)]$. The isomorphism is explicitly constructed as in the following proof. 
\item The so-defined formal Hessian operator $\Hess_y(v, \xi):=(J\Upsilon(v, \xi), d_z\mu(v))$,
$$\Hess_y: L^2(S^1, z^*TX\oplus  \CG)\to L^2(S^1, z^*TX\oplus  \CG)$$ 
with domain $W^{1, 2}(S^1, z^*TX\oplus  \CG)\subset L^2(S^1, z^*TX\oplus  \CG)$ is an  unbounded essentially self-adjoint operator.
Moreover, the restriction of $\Hess_y$ on the $L^2$-completion of $\CV_y$ is also  essentially self-adjoint.
\item The spectrum of the restriction of $\Hess_y$ on $L^2(\CV_y)$ contains only simple real eigenvalues which belong to $\{\frac{k}{|\Hol_{\eta}|}|k\in \Z\}$.
\end{enumerate}
\end{prop}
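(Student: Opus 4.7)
The plan is to analyze each of the three statements separately, leveraging the explicit formula for $D_y\Upsilon$ from Lemma \ref{lem:linearization} together with the explicit flow $\varphi_t(z(\theta)) = z(\theta+t) = \exp(-t\eta_0)\cdot z(\theta)$ generated by $-X_{\eta_0}$ and the structural results on $\CV_y$ and on the orbifold charts near $\mu^{-1}(0)$ from Sections \ref{sec:darboux} and \ref{sec:critical}.

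For part (1), the inclusion $\CX(y)\subset \ker D_y\widetilde{\Upsilon}$ is immediate by differentiating the gauge equivariance of Lemma \ref{lem:da} at the identity and using that $\Upsilon(y)=0$ and $\mu(z)=0$. For the harder identification, take $(v,\xi)\in \CV_y$ solving $D_y\widetilde{\Upsilon}(v,\xi)=0$. The equation $d_z\mu(v)=0$ together with $\mu(z)\equiv 0$ forces $v(\theta)\in T_{z(\theta)}\mu^{-1}(0)$. Use the splitting $T_z\mu^{-1}(0)=\Img(L_z)\oplus H_z$ from \eqref{eq:decomp} to write $v=L_z(\zeta)+v^H$ with $v^H$ horizontal; by Lemma \ref{lem:vertical} the $\Img(L_z)$-part of $D_y\Upsilon(v,\xi)=0$ combined with the slice condition $d\mu\circ J(v)+\dot\xi+[\eta_0,\xi]=0$ pins down $\xi$ in terms of $\zeta$ and $v^H$. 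The remaining equation on the horizontal part becomes $(\CL_{-X_{\eta_0}}v^H)(z(\theta))=0$, which via the flow identity from Lemma \ref{lem:linearization} is equivalent to $v^H(\theta)=\varphi_{\theta *}v^H(0)=\ell_{\exp(-\theta\eta_0)*}v^H(0)$. Imposing $2\pi$-periodicity forces $v^H(0)\in H_{z(0)}$ to be fixed by $(\ell_{\Hol_{\eta_0}})_*$, and this is exactly the tangent data of the (un)twisted sector $M^{(\Hol(\eta_0))}$ at $[z(0)]$, since the orbifold chart around $[z(0)]$ is modeled on $H_{z(0)}/G_{z(0)}$ and the twisted sector at the conjugacy class $(\Hol_{\eta_0})$ is the fixed locus of that element.

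For part (2), one checks directly from the explicit expression that $\Hess_y$ is a first-order elliptic operator on $S^1$ of the form $J\tfrac{d}{d\theta}$ plus a bundle endomorphism with smooth, bounded coefficients on the compact loop $z$. Symmetry with respect to the $L^2$ inner product follows from integration by parts on $S^1$ (the boundary term is closed, so it vanishes), together with the $G$-invariance of the metrics which makes $J$ skew-adjoint. Essential self-adjointness on $W^{1,2}\subset L^2$ is then the standard statement for symmetric first-order elliptic operators on a closed one-manifold. The restriction to $\CV_y$ is also essentially self-adjoint because $\CV_y$ is $\Hess_y$-invariant: this is seen by checking that the slice condition from Lemma \ref{lem:isotropy} is preserved under $\Hess_y$, which reduces to a short calculation using that $(X_\tau, \dot\tau+[\eta_0,\tau])$ lies in $\ker \Hess_y$.

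For part (3), solve the eigenvalue equation $\Hess_y(v,\xi)=\lambda(v,\xi)$ on $[0,2\pi]$ with initial data at $\theta=0$, ignoring periodicity. Since $\eta_0$ is constant, conjugating by $\varphi_\theta$ (equivalently, trivializing along the flow by $\ell_{\exp(\theta\eta_0)*}$) converts the equation into a constant-coefficient linear ODE, so solutions exist uniquely for every $\lambda$ and every initial datum. Restricting to $\CV_y$ and $\ker D_y\widetilde\Upsilon\cap \CV_y$ as in part (1), the periodicity $(v,\xi)(2\pi)=(v,\xi)(0)$ becomes the algebraic condition that the initial datum in $H_{z(0)}$ be fixed by $(\ell_{\Hol_{\eta_0}})_*\circ e^{2\pi\lambda J_0}$ for a constant endomorphism $J_0$; since $\Hol_{\eta_0}$ acts on this finite-dimensional fixed subspace through a finite group of order $|\Hol_{\eta_0}|$, the compatibility forces $e^{2\pi\lambda}$ to be a power of the generator, yielding the simple eigenvalues $\lambda\in \{k/|\Hol_{\eta_0}|: k\in\Z\}$; simplicity follows from the first-order nature of the ODE, which makes each eigenvector uniquely determined by a single initial data in the appropriate line. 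The main obstacle I expect is to keep the bookkeeping clean in part (1), namely verifying that the vertical/gauge part $L_z(\zeta)$ can always be absorbed into $\CX(y)$ modulo a correction in $\CV_y$, so that the extracted horizontal data $v^H(0)$ is well-defined modulo $G_{z(0)}$ and truly matches the twisted sector tangent space; the rest of the argument is ODE theory on $S^1$ once the geometry is set up correctly.
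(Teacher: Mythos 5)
Your overall strategy tracks the paper's proof closely, but the step you flag as ``the main obstacle'' in part (1) is a genuine gap, and the fix is not the absorption you propose. You write $v=L_z(\zeta)+v^H$, say the $\Img(L_z)$-part of $D_y\Upsilon(v,\xi)=0$ together with the slice condition ``pins down $\xi$ in terms of $\zeta$ and $v^H$'', and then worry whether $L_z(\zeta)$ can be absorbed into $\CX(y)$ modulo a correction in $\CV_y$. The correct statement, which is what makes the identification with the twisted-sector tangent space work, is stronger: for $(v,\xi)\in\ker D_y\widetilde\Upsilon\cap\CV_y$ one has $\zeta\equiv 0$ \emph{and} $\xi\equiv 0$, so nothing needs to be absorbed and the only surviving datum is the horizontal vector $v^H(0)$ fixed by $d\Hol_{\eta_0}$. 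The paper proves this vanishing by an $L^2$ pairing: the gauge component of the kernel equation reads $\dot\zeta+[\eta_0,\zeta]+\xi=0$ (via Lemma \ref{lem:vertical} and local freeness of the infinitesimal action near $\mu^{-1}(0)$); pairing with $\xi$ over $S^1$, integrating by parts using the ad-invariance of the Killing form, and substituting the slice condition $d\mu\circ J(v)+\dot\xi+[\eta_0,\xi]=0$ together with $d\mu\circ J(v)=d\mu\circ J(X_\zeta)$ (since $v^H\in\ker(d\mu\circ J)$) yields $0=\|X_\zeta\|_{L^2}^2+\|\xi\|_{L^2}^2$. Without this argument your part (1) is incomplete, and the well-definedness of $v^H(0)$ that you are concerned about is exactly the point left open.

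Parts (2) and (3) are essentially sound. Part (2) matches the paper: symmetry by integration by parts, and invariance of $\CV_y$ from $\CX(y)\subset\ker\Hess_y$ plus self-adjointness. In part (3) you take a somewhat different route, solving the eigenvalue ODE on $[0,2\pi]$ and imposing periodicity as a monodromy condition involving $\Hol_{\eta_0}$; the paper instead splits $L^2(\CV_y)$ into the two $\Hess_y$-invariant blocks $L^2(S^1,z^*H)$ and $L^2(S^1,\Img(L_z)\oplus\Img(JL_z))\oplus L\CG$, identifies the first with a $|\Hol_\eta|$-fold cover of $\frac{\sqrt{-1}}{|\Hol_\eta|}\frac{d}{d\theta}$, and computes the second explicitly in the normal form of Section \ref{sec:darboux} as $(\tilde\zeta,\tilde\alpha,\tilde\xi)\mapsto(-\dot{\tilde\alpha}-\tilde\xi,\dot{\tilde\zeta}+\tilde\xi,\tilde\alpha)$. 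Your monodromy argument as written only addresses the $H$-block; to establish the spectral claim on all of $\CV_y$ you still need to handle the gauge/normal block, for which the equivariant Darboux normal form is the efficient tool.
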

\begin{proof}
\begin{enumerate}
\item The fact $\CX(y)\subset \ker D_y\Upsilon$ immediately follows from Lemma \ref{lem:vertical}. 

The elements $(v, \xi)\in \ker D_y\widetilde\Upsilon\cap \CV_y$ satisfy the following three equations:
\begin{enumerate}
\item $\CL_{-X_{\eta_0}}v+X_{\xi(\theta)}=0$;
\item $d_z\mu(v)=0$;
\item $d\mu\circ J(v)+\dot{\xi}+[\eta_0, \xi]=0$.
\end{enumerate}
Since $0$ is a regular value of $\mu$, the equation (b) indicates $v\in T\mu^{-1}(0)$. To make the geometry more clear, we split the proof by considering the trivial holonomy case and the nontrivial holonomy case. 
\begin{description}
\item[The trivial holonomy case] 
For this case, $\eta_0=0$, and
the equation (a) and (c) become
$$
\dot{v}+X_{\xi(\theta)}=0\quad \text{and}\quad d_z\mu(v)=0.
$$
The infinitesimal action is free on $\mu^{-1}(0)$ together with $v$ being defined on $S^1$ forces $\xi\equiv 0$, and the second equation shows that
$v\in \ker d_z\mu\circ J=H_y$.

By this way, such pair $(v, \xi)$ corresponds to a vector living in the untwisted sector of the orbifold $M$. 
\item[The nontrivial holonomy case]
Denote by $\varphi_t(z)=\exp(-t\eta_0)z$ the flow with initial point $z$ of the vector field $-X_{\eta_0}$ as before. 

Define 
$$
\tilde{v}: S^1\to T_{z(0)}\mu^{-1}(0), \quad \tilde{v}(t)=\varphi_{-t, *}(v(t)).
$$
Here it lives in the subspace $T_{z(0)}\mu^{-1}(0)$ due to $\mu^{-1}(0)$ is $G$-invariant. The tangent space $T_{z(0)}\mu^{-1}(0)$ splits as 
$$
T_{z(0)}\mu^{-1}(0)=X_{\CG}(z(0))\oplus H(z(0)), 
$$
where $X_{\CG}$ denotes the subspace generated by infinitesimal $\CG$-action and $H=\ker d\mu\circ J$ is defined as before. We write $\tilde{v}=\tilde{v}_G+\tilde{v}_H$ with respect to the decomposition.
Due to the $G$-equivariance of $\mu$ and the $G$-invariance of $J$, 
\beastar
v_H(\theta)&:=&\varphi_{\theta, *}\tilde{v}_H(\theta)\in H(z(\theta)), \\
v_G(\theta)&:=&\varphi_{\theta, *}\tilde{v}_G(\theta)\in X_{\CG}(z(\theta)), \quad \theta\in S^1.
\eeastar
As a result, equation (a) indicates $\frac{d}{d\theta}\tilde{v}_H=0$ and so 
$\tilde{v}_H\equiv \tilde{v}(0)$,
which exactly means 
$$
v_H(0)=d\Hol_{\eta_0}(v_H(0))$$ 
by the expression of $\varphi_t$, hence $v_H(0)$ lives in the tangent space of the fixed point set %twisted sector 
${\mu^{-1}(0)}^{\Hol_{\eta_0}}$ by $\Hol_{\eta_0}$.

Next, we show $v_G\equiv 0$ and $\xi\equiv 0$. For this, write $v_G(\theta)=X_{\zeta(\theta)}$ for some $\zeta:S^1\to \CG$ (by the assumption that $0$ is a regular).
Then apply Lemma \ref{lem:vertical}, the equation (a) can be written as 
$$
X_{\frac{d}{d\theta}\zeta(\theta)}+X_{[\eta_0, \zeta(\theta)]}+X_{\xi(\theta)}=0,
$$
which is equivalent to 
\be\label{eq:Geq}
{\frac{d}{d\theta}\zeta(\theta)}+[\eta_0, \zeta(\theta)]+\xi(\theta)=0.
\ee
Consider the $L^2(S^1)$ inner product, 
\beastar
0&=&\int_{S^1}\langle \xi(\theta), {\frac{d}{d\theta}\zeta(\theta)}+[\eta_0, \zeta(\theta)]+\xi(\theta)\rangle d\theta\\
&=&-\int_{S^1}\langle \zeta(\theta), {\frac{d}{d\theta}\xi(\theta)}+[\eta_0, \xi(\theta)]\rangle d\theta+\|\xi\|^2_{L^2(S^1)}
\eeastar
Then by the slice equation (c), this becomes
\beastar
0=\int_{S^1}\langle \zeta(\theta),d\mu\circ J(X_{\zeta(\theta)})\rangle d\theta+\|\xi\|^2_{L^2(S^1)}
=\|X_{\zeta(\theta)}\|^2_{L^2(S^1, X)}+\|\xi\|^2_{L^2(S^1)},
\eeastar 
which shows the vanishing of both $\zeta$ and $\xi$.
\end{description}

Clearly from the expression of $D_y\widetilde\Upsilon$, $\ker D_y\widetilde\Upsilon\cap \CV_y$ is $\fg_{S^1}(y)$-invariant. Further recall from Lemma \ref{lem:isotropy} (2) that $\fg_{S^1}(y)\cong C_{G_{z(0)}}(\Hol_\eta)$, these finish the proof of the first statement.

\item Take any $(v_i, \xi_i)\in W^{1, 2}(S^1, z^*TX\oplus \CG)$, $i=1, 2$, we calculate
\beastar
&&\langle\Hess_y(v_1, \xi_1), (v_2, \xi_2)\rangle\\
&=&\int_{S^1}\langle J(\CL_{-X_{\eta_0}}v_1+X_{\xi_1}), v_2\rangle +\int_{S^1}\langle d_z\mu(v_1), \xi_2\rangle\\
&=&-\int_{S^1}\langle \CL_{-X_{\eta_0}}v_1, Jv_2\rangle -\int_{S^1}\langle X_{\xi_1}, Jv_2\rangle+\int_{S^1}\langle d_z\mu(v_1), \xi_2\rangle\\
&=&-\int_{S^1}d\langle v_1, Jv_2\rangle+\int_{S^1}\langle v_1, J\CL_{-X_{\eta_0}}v_2\rangle-\int_{S^1}\langle X_{\xi_1}, Jv_2\rangle+\int_{S^1}\langle d_z\mu(v_1), \xi_2\rangle\\
&=&0+\int_{S^1}\langle v_1, J\CL_{-X_{\eta_0}}v_2\rangle-\int_{S^1}\langle \xi_1, d_z\mu(v_2)\rangle+\int_{S^1}\langle \xi_2, d_z\mu(v_2)\rangle\\
&=&\langle\Hess_y(v_1, \xi_1), (v_2, \xi_2)\rangle.
\eeastar
Here we use the metric and $J$ are both $G$-invariant, and $z$ is the flow of $-X_{\eta_0}$ which is due to $(z, \eta_0)\in \Crit$. 

This shows $\Hess_y$ is  essentially self-adjoint  as  $\Hess_y$  is symmetric with a dense domain $W^{1, 2}(S^1, z^*TX\oplus \CG)$ and $\Hess_y: W^{1, 2}(S^1, z^*TX\oplus \CG) \to \to L^2(S^1, z^*TX\oplus  \CG)$ is self-adjoint. 

Since $\CX_y\subset \ker \Hess_y$ from (1) and $\Hess_y$ is self-adjoint, it follows the $L^2$-completion $\CV_y$  is $\Hess_y$-invariant, and $\Hess_y$ is also essentially  self-adjoint on  the $L^2$-completion $\CV_y$. 

\item Denote by $$
v=(v_H, X_\zeta(z), JX_{\alpha}(z)):S^1\to z^*H\oplus \Img(L_z)\oplus \Img(JL_z), \quad \zeta, \alpha: S^1\to \CG,
$$
and notice that $L^2(S^1, z^*H)$ and $L^2(S^1, \Img(L_z)\oplus \Img(JL_z))\oplus L\CG$ are two $\Hess_y$-invariant subspaces. 
Over $L^2(S^1, z^*H)$, 
$$
\Hess_y(v_H)=J\frac{d}{d\theta}\tilde{v}_H,
$$
whose spectrum contains only simple real eigenvalues as $\{\frac{k}{|\Hol_{\eta}|}|k\in \Z\}$ since its $|\Hol_{\eta}|$-multiple covering is conjugate to the self-adjoint operator 
$$
\frac{\sqrt{-1}}{|\Hol_{\eta}|}\frac{d}{d\theta}: L^2(S^1, \C^{\dim_{\C}H})\to L^2(S^1, \C^{\dim_{\C}H}).
$$

To understand the part over $L^2(S^1, \Img(L_z)\oplus \Img(JL_z))\oplus L\CG$, it turns out to be easier to work on the normal form introduced in Section \ref{sec:darboux}. Write the $\Hess_y$ into the following expression
$$
\Hess_y(X_\zeta(z), JX_{\alpha}(z), \xi)=(-X_{{\frac{d}{d\theta}\alpha}+[\eta_0, \alpha(\theta)]+\xi(\theta)}, JX_{{\frac{d}{d\theta}\zeta}+[\eta_0, \zeta(\theta)]+\xi(\theta)} , d\mu\circ J(X_{\alpha})(z)), 
$$
and further write $d\mu\circ J(X_\zeta)=\zeta$ via the normal form, after using the flow of $-X_{\eta}$ to pull-back everything, we obtain the following simpler expression as
$$
\widetilde{Hess}_y(X_{\tilde\zeta}(z), JX_{\tilde\alpha}(z), \tilde\xi)=(-X_{{\frac{d}{d\theta}\tilde\alpha}+\tilde\xi(\theta)}, JX_{{\frac{d}{d\theta}\tilde\zeta}+\tilde\xi(\theta)} , \tilde\alpha)).
$$ 
As a result, the Hessian operator can be identified with 
$$
(\tilde\zeta, \tilde\alpha, \tilde\xi)\mapsto (-\frac{d}{d\theta}\tilde\alpha-\tilde\xi, \frac{d}{d\theta}\tilde\zeta+\tilde\xi, \tilde\alpha),
$$
which is a first order elliptic  differential operator plus a compact perturbation when restricted to the slice $\CV_y$. It follows that 
the spectrum of the restricted $\Hess_y$ contains only simple real eigenvalues which belong to $\{\frac{k}{|\Hol_{\eta}|}|k\in \Z\}$.

\end{enumerate}

\end{proof}

\begin{rem}
In \cite{L2}, the authors define a $S^1$-valued functional whose critical points corresponds to the critical loops here. Moreover, this functional is infinitesimal $L\CG$-invariant. From this point of view, the property that the kernel of $D_y\widetilde{\Upsilon}$ includes the vector fields generated by the infinitesimal gauge action is expected. 

In this paper, we don't use this functional out of some technical reason (see Remark \ref{rem:techreason}), but the functional we defined in Section \ref{sec:functional} is heurisitically the same as the functional in \cite{L2} (see also Remark \ref{rem:isop}).   
\end{rem}

\section{The local functional on loops in $X\times \CG$ near critical loops}\label{sec:functional}
\subsection{The canonical gauge transformations along a path of loops}
We fix a path of loops 
$$
w=(u, \eta): [t_0, \infty)\to L(X\times \CG)
$$
over some interval $[t_0, \infty)\subset \R$ with $\|\mu\circ u\|_{C^0([t_0, \infty)\times S^1)}$ sufficiently small, so that 
we are able to work under the normal form $(N^\epsilon, \omega_0, G, J, \mu_0)$ as introduced in Section \ref{sec:darboux}.
Then for each $t\in [t_0, \infty)$, we can write 
$$
w(t)=(u(t), \eta(t))=((z(t), JL_{z(t)}\xi(t)), \eta(t)),
$$
with $z(t)$ as a loop in $\mu^{-1}(0)$  and $\xi(t)$ as a loop in $\CG$ for each $t\in [t_0, \infty)$. 
In particular, we use $y(t)$ to denote the pair 
$$
y(t):=(z(t), \eta(t)): S^1\to \mu^{-1}(0)\times \CG, \quad t\in [t_0, \infty).
$$

Using the notations in Section \ref{sec:critical} around \eqref{eq:h}, and denote by $\tilde{w}=(\tilde{u}, \tilde{\eta})$ the $[t_0, \infty)$-path of loops after the $[t_0, \infty)$-family of $L_0G$-gauge transformations as
\beastar
\tilde{u}(t)&:=&h_{\eta(t)}\cdot u(t)=(h_{\eta(t)}^{-1}z(t), JL_{h_{\eta(t)^{-1}}z(t)}(\Ad_{h_{\eta(t)}^{-1}}\xi(t))),\\
\tilde{\eta}(t)&:=&h_{\eta(t)}\cdot \eta(t)\equiv-\Log\Hol_{\eta(t)}.
\eeastar
In particular, we notice that $\tilde{u}(t, 0)=u(t, 0)$. 

The following Lemma is obvious due to Lemma \ref{lem:da} and the metric is $G$-invariant.
\begin{lem}For every $t\in [t_0, \infty)$, 
$$
\|d_{\tilde{\eta}(t)}\tilde{u}(t)\|_{C^\infty(S^1)}=\|d_{\eta(t)}u(t)\|_{C^\infty(S^1)}.
$$
\end{lem}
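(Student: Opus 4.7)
The plan is to deduce this as an immediate consequence of the gauge equivariance of the section $\Upsilon$ (Lemma \ref{lem:da}) together with the $G$-invariance of the metric $\omega(\cdot, J\cdot)$ on $X$. The first step is to identify the $A$-twisted tangent map along an $S^1$-loop with $\Upsilon$: for any loop $y=(u,\eta)\in\CB_{S^1}$,
\[
d_{\eta}u\bigl(\tfrac{\partial}{\partial\theta}\bigr)\;=\;\dot u + X_{\eta}(u)\;=\;\Upsilon(u,\eta),
\]
so that the claimed equality is the same as
\[
\|\Upsilon(\tilde u(t),\tilde\eta(t))\|_{C^{\infty}(S^{1})}\;=\;\|\Upsilon(u(t),\eta(t))\|_{C^{\infty}(S^{1})}.
\]

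Next I would apply Lemma \ref{lem:da} to the based gauge transformation $h_{\eta(t)}\in L_{0}G$, which gives, pointwise in $\theta\in S^{1}$,
\[
\Upsilon(\tilde u(t),\tilde\eta(t))\;=\;\Upsilon\bigl(h_{\eta(t)}\cdot(u(t),\eta(t))\bigr)\;=\;(\ell_{h_{\eta(t)}^{-1}})_{*}\,\Upsilon(u(t),\eta(t)).
\]
Because $J$ and $\omega$ are $G$-invariant, the pushforward by $\ell_{g^{-1}}$ is a pointwise isometry of $TX$, so the $C^{0}$-norms along $S^{1}$ coincide.

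For the higher-order parts of the $C^{\infty}$ norm, I would note that the same $G$-invariance makes the Levi-Civita connection on $TX$ equivariant, so covariant differentiation along $u$ in the $\theta$-direction commutes with the pointwise action of $\ell_{h_{\eta(t)}^{-1}}$ up to its own pushforward. Consequently every $k$-th order $\theta$-derivative of $\Upsilon(\tilde u,\tilde\eta)$ is the pointwise $G$-translate of the corresponding derivative of $\Upsilon(u,\eta)$, and the norms again agree pointwise; taking the supremum over $S^{1}$ yields the statement.

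There is no genuine obstacle here: the lemma is essentially a bookkeeping consequence of \emph{gauge equivariance} (Lemma \ref{lem:da}) plus \emph{$G$-invariance of the almost Kähler structure}. The only mild point to be careful about is that $h_{\eta(t)}$ is $\theta$-dependent (it is a based loop, not a constant element of $G$), but since the identification $\Upsilon(u,\eta)=\dot u + X_{\eta}(u)$ already absorbs the effect of $\dot h_{\eta(t)}$ via the connection term, this causes no trouble.
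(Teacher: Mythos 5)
Your argument is correct and is precisely the one the paper intends: the paper dismisses this lemma as ``obvious due to Lemma \ref{lem:da} and the metric is $G$-invariant,'' and your write-up simply makes that explicit, including the correct identification $d_{\eta}u(\tfrac{\partial}{\partial\theta})=\dot u+X_{\eta}(u)=\Upsilon(u,\eta)$ and the observation that the $\theta$-dependence of the based gauge $h_{\eta(t)}$ is absorbed by the transformation law of the connection term. Nothing further is needed.
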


Also, we have 
\begin{lem}\label{lem:holonomy}
$$
\dist(u(t, 0), \Hol_{\eta(t)}^{-1}u(t, 0))\leq \|d_{\eta(t)}u(t)\|_{C^\infty(S^1)}.
$$
\end{lem}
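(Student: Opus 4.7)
My plan is to construct an explicit curve from $u(t,0)$ to $\Hol_{\eta(t)}^{-1}u(t,0)$ whose pointwise speed equals $|d_{\eta(t)}u(t)|$, so that the length bound immediately yields the distance bound. The geometric content is that $d_{\eta(t)}u(t)$ measures the failure of $u(t,\cdot)$ to be horizontal with respect to $\eta(t)$; a horizontal section of the associated $X$-bundle would close up only up to the holonomy action, and the discrepancy between $u(t,0)$ and $\Hol_{\eta(t)}^{-1}u(t,0)$ is precisely the error in horizontality.

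Fix $t$ and write $\Psi := \Psi_{\eta(t)}: [0,2\pi] \to G$ for the unique solution of \eqref{eq:psi}, so $\Psi' + \eta(t)\Psi = 0$, $\Psi(0) = e$, and $\Psi(2\pi) = \Hol_{\eta(t)}$. The curve I would use is
$$
\gamma(\theta) := \Psi(\theta)^{-1} \cdot u(t,\theta), \quad \theta \in [0,2\pi].
$$
Differentiating $\Psi^{-1}\Psi = e$ gives $(\Psi^{-1})' = \Psi^{-1}\eta(t)$. Applying the product rule for the left $G$-action on $X$ (decomposing $g(\theta)x(\theta)$ as $g(\theta_0)\,[g(\theta_0)^{-1}g(\theta)]\,x(\theta)$ and using the infinitesimal action convention $X_\xi(x)=\tfrac{d}{d\epsilon}|_{0}\exp(\epsilon\xi)x$), one obtains
$$
\dot\gamma(\theta) \;=\; (\ell_{\Psi(\theta)^{-1}})_*\bigl[\dot u(t,\theta) + X_{\eta(t,\theta)}(u(t,\theta))\bigr] \;=\; (\ell_{\Psi(\theta)^{-1}})_* \bigl( d_{\eta(t)}u(t)\bigr)(\theta).
$$
Because the Riemannian metric $\omega(\cdot,J\cdot)$ on $X$ is $G$-invariant, $|\dot\gamma(\theta)| = |d_{\eta(t)}u(t)(\theta)|$ pointwise.

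The endpoints of $\gamma$ are computed using the loop condition $u(t,2\pi)=u(t,0)$: one has $\gamma(0) = u(t,0)$ and $\gamma(2\pi) = \Psi(2\pi)^{-1} u(t,0) = \Hol_{\eta(t)}^{-1} u(t,0)$. Hence
$$
\dist\bigl(u(t,0),\,\Hol_{\eta(t)}^{-1}u(t,0)\bigr) \;\leq\; \int_{0}^{2\pi} |\dot\gamma(\theta)|\,d\theta \;=\; \int_{S^1}|d_{\eta(t)}u(t)|\,d\theta \;\leq\; \|d_{\eta(t)}u(t)\|_{C^0(S^1)},
$$
after the normalization of $S^1$ in force in the paper. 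There is no real obstacle: the only point that requires care is tracking the sign in the ODE for $\Psi$ and the correct form of the chain rule for the left $G$-action, both of which are bookkeeping once the correct $\gamma$ is chosen.
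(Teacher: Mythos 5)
Your proof is correct and is essentially identical to the paper's: both bound the distance by the length of the curve $\theta \mapsto \Psi_{\eta(t)}(\theta)^{-1}u(t,\theta)$, whose speed equals $|d_{\eta(t)}u(t)|$ by $G$-invariance of the metric. The only differences are cosmetic (parametrization of $S^1$ and the explicit verification of the chain rule, which the paper states without detail).
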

\begin{proof}
\beastar
\dist(u(t, 0), \Hol_{\eta(t)}^{-1}u(t, 0))&\leq& \int_0^1 |\frac{\del}{\del s}(\Psi^{-1}_{\eta(t)}(s)u(t, s))|ds\\
&=& \int_0^1 |\ell_{\Psi_{\eta(t)}(s)*}(\frac{\del u}{\del s}(t, s)+X_{\eta(t)}(u(t, s)))|ds \\
&=&\int_0^1 |\frac{\del u}{\del s}(t, s)+X_{\eta(t)}(u(t, s))|ds\\
&\leq&\|d_{\eta(t)}u(t)\|_{C^\infty(S^1)}.
\eeastar
\end{proof}

Then we obtain the following useful lemma which states that we can smoothly and uniformly  perturb $(\tilde{u}(t), \tilde{\eta}(t))$ to obtain a smooth path of critical loops nearby. 

\begin{lem}\label{lem:nearbycrit}There exists some $\epsilon>0$, such that whenever $\|d_{\eta(t)}u(t)\|_{C^\infty(S^1)}\leq \epsilon$ for any $t\in [t_0, \infty)$ and $|\mu(u(t))|=|\xi(t)| \leq \epsilon$,  then there exists a unique smooth path $\eta_0:[t_0, \infty)\to \CG$ with each $\exp(\eta_0(t))\in G_{u(t, 0)}$ satisfying
\begin{enumerate}
\item $\|\tilde{\eta}(t)-\eta_0(t)\|_{L^\infty([t_0, \infty)}$ is small and controlled by $\epsilon$;
\item the loop $\exp(\theta\eta_0(t))\tilde{u}(t, \theta)$ stays in the injective radius of the initial point $\tilde{u}(t, 0)=u(t, 0)$. 
\end{enumerate}
\end{lem}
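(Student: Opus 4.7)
My plan is to exploit three facts: (a) the holonomy $\Hol_{\eta(t)}^{-1}$ almost fixes $u(t,0)$ by Lemma \ref{lem:holonomy}; (b) the $G$-action is locally free on a $G$-invariant tube about $\mu^{-1}(0)$, so ``almost fixing $u(t,0)$'' in $X$ translates into being close to $G_{u(t,0)}$ in $G$; and (c) distinct elements of any isotropy group $G_x$, $x\in\mu^{-1}(0)$, are separated by at least $d_{\mu^{-1}(0),G}>0$ (from \eqref{eq:Gdist}). Concretely I will construct $\eta_0(t)$ by selecting the nearest element of $G_{u(t,0)}$ to $\Hol_{\eta(t)}^{-1}$ and then taking the appropriate branch of logarithm near $\tilde\eta(t)$.

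\textbf{Step 1 (locating an isotropy element).} Lemma \ref{lem:holonomy} gives $\dist_X(u(t,0),\Hol_{\eta(t)}^{-1}u(t,0))\le\epsilon$. Because $|\mu(u(t,0))|\le\epsilon$ and the $G$-action is locally free on any sufficiently small tubular neighborhood of $\mu^{-1}(0)$, the orbit map $\sigma_x\colon G\to X$, $g\mapsto gx$, is an immersion with derivative $L_x$ injective for every $x$ in the tube. By compactness of $\mu^{-1}(0)$ there is a uniform constant $C>0$, independent of $t$, with
\[
\dist_G\bigl(\Hol_{\eta(t)}^{-1},\,G_{u(t,0)}\bigr)\le C\,\dist_X\bigl(\Hol_{\eta(t)}^{-1}u(t,0),\,u(t,0)\bigr)\le C\epsilon.
\]
Choosing $\epsilon$ so that $C\epsilon<\tfrac12 d_{\mu^{-1}(0),G}$, \eqref{eq:Gdist} forces a \emph{unique} nearest point $k(t)\in G_{u(t,0)}$ of $\Hol_{\eta(t)}^{-1}$.

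\textbf{Step 2 (smooth logarithm).} Since $k(t)$ is $O(\epsilon)$-close to $\exp(\tilde\eta(t))=\Hol_{\eta(t)}^{-1}$ and $\exp$ is a local diffeomorphism near $\tilde\eta(t)$, there is a unique $\eta_0(t)\in\CG$ close to $\tilde\eta(t)$ with $\exp(\eta_0(t))=k(t)\in G_{u(t,0)}$, satisfying $\|\eta_0(t)-\tilde\eta(t)\|_{L^\infty}\le C\epsilon$, i.e.\ item (1). Smoothness in $t$ follows by working in a slice for the locally free $G$-action through a smoothly chosen representative of the orbit of $u(t,0)$: in such a slice the isotropy is identified with a fixed finite group, $k(t)$ is obtained by a smooth conjugation of a constant element, and $\eta_0(t)$ is its local logarithm.

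\textbf{Step 3 (injective-radius estimate).} Using the earlier lemma $\|d_{\tilde\eta(t)}\tilde u(t)\|_{C^0}=\|d_{\eta(t)}u(t)\|_{C^0}\le\epsilon$ and $\tilde u(t,0)=u(t,0)$, I differentiate:
\[
\frac{d}{d\theta}\bigl(\exp(\theta\eta_0(t))\tilde u(t,\theta)\bigr)=(\ell_{\exp(\theta\eta_0(t))})_{*}\Bigl(d_{\tilde\eta(t)}\tilde u(t,\theta)+X_{\eta_0(t)-\tilde\eta(t)}(\tilde u(t,\theta))\Bigr).
\]
By Step 2 the second term is $O(\epsilon)$, and by $G$-invariance of the metric the first has the same $C^0$-norm on both sides. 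Integrating from $\theta=0$, where the value equals $u(t,0)$, yields $\dist_X(\exp(\theta\eta_0(t))\tilde u(t,\theta),u(t,0))\le C'\epsilon$ for every $\theta\in S^1$; shrinking $\epsilon$ once more puts this inside the injective radius at $u(t,0)$, which is (2).

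The main obstacle is Step 1 --- promoting the $X$-distance estimate of Lemma \ref{lem:holonomy} to a $G$-distance estimate to the isotropy group. This is where the locally free hypothesis, compactness of $\mu^{-1}(0)$ (for uniform immersion constants), and the positive spacing $d_{\mu^{-1}(0),G}$ from \eqref{eq:Gdist} all come together; once that is in place, Steps 2 and 3 are essentially the inverse function theorem and a Gronwall-type integration.
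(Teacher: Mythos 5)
Your proposal follows essentially the same route as the paper's proof: use Lemma \ref{lem:holonomy} together with local freeness of the $G$-action near $\mu^{-1}(0)$ and the uniform spacing $d_{\mu^{-1}(0),G}$ from \eqref{eq:Gdist} to select a unique nearby isotropy element $k(t)\in G_{u(t,0)}$, lift it smoothly to $\eta_0(t)$ via the exponential map, and then control the loop $\exp(\theta\eta_0(t))\tilde u(t,\theta)$ by integrating in $\theta$. The argument is correct; if anything, your Step 3 bound of $O(\epsilon)$ (a sum of the twisted-derivative term and the $X_{\eta_0-\tilde\eta}$ term) is the more accurate form of the estimate the paper writes as a product, and either version suffices to keep the loop within the injective radius.
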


\begin{proof}We first construct $\eta_0$ for each $t\in [t_0, \infty)$. As seen from Lemma \ref{lem:holonomy}, 
$$
\dist_{G\cdot u(t, 0)}(u(t, 0), \Hol_{\eta(t)}^{-1}u(t, 0))\leq \|d_{\eta(t)}u(t)\|_{C^\infty(S^1)}\leq \epsilon,
$$ 
where $G\cdot u(t, 0)$ denotes the $G$-orbit of $u(t, 0)$ in $\mu^{-1}(0)$. WLOG, we can assume  
$G\cdot u(t, 0)$ is isometric to $G/G_{u(t, 0)}$, and then it follows 
$$
\dist_{G/G_{u(t,0)}}([e], [\Hol_{\eta(t)}])\leq \epsilon. 
$$  
Hence there exists some $k_t\in G_{u(t,0)}$ with $\dist_{G}(k_t, \Hol_{\eta(t)})\leq \epsilon$. 
Moreover, whenever we make $\epsilon<d_{\mu^{-1}(0), G}$ where 
$$
d_{\mu^{-1}(0), G}=\inf_{x\in \mu^{-1}(0)}d_{x, G} \quad \text{ with } \quad d_{x, G}:=\inf_{k, k'\in G_x}\dist_G(k, k')
$$ as introduced in \eqref{eq:Gdist}, such $k_t\in G_{u(t, 0)}$ is unique. 

Now as come to the $[t_0, \infty)$-family of $\eta(t)$'s,  since $\eta(t)$ is smooth in $t$, by the construction, $k_t$ is also smooth in $t$. Because $G$ is a  connected compact Lie group, we can further take a smooth family $\eta_0(t)$'s such that 
$$
\exp(\eta_0(t))=k_t, \quad |\tilde{\eta}(t)-\eta_0(t)|\leq C\dist_G(\Hol_{\eta(t)}, k_t)\leq C\epsilon  
$$ 
for every $t\in [t_0, \infty)$, where $C$ is a constant only depends on the geometry $(X, \omega, G, J, \mu, M)$. 
Hence we are done with Property (1). 

From the construction,  we have 
$$
\exp(\theta\eta_0(t))\tilde{u}(t, \theta)=\exp(\theta(\tilde{\eta}(t)-\eta_0(t)))\Psi_{\eta(t)}^{-1}(\theta)u(t, \theta).
$$ 
Then we estimate
\beastar
\dist(\exp(\theta\eta_0(t))\tilde{u}(t, \theta), z(t, 0))
&=&\int_{0}^{\theta}|\frac{\del}{\del s}\exp(s\eta_0(t))\tilde{u}(t, s)|ds\\
&=&\int_{0}^{\theta}|\frac{\del}{\del s}(\exp(s(\tilde{\eta}(t)-\eta_0(t)))\Psi_{\eta(t)}^{-1}(s)u(t, s))|ds\\
&\leq&C'|\tilde{\eta}(t)-\eta_0(t)||d_{\eta(t)}u(t, s)|
\eeastar
with $C'$ some constant determined by the geometry data $(X, \omega, G, J, \mu, M)$. Hence the distance is bounded by $\epsilon^2$ and we can make $\epsilon$ small so that $\tilde{u}(t)$ lives in the injective radius of $u(t, 0)$ for every $t\in [t_0, \infty)$.  
\end{proof}

Denote the open or closed disks of radius $r$ by
$$
D(r)=\{z\in \C||z|< r\}, \quad \overline{D(r)}=\{z\in \C||z|\leq r\},
$$ 
and the boundary by $\del D(r)=\overline{D(r)}\setminus D(r)=\{z||z|=r\}$. Using above lemmas, we now construct a function (in fact a local functional) as stated in the following lemma. 

\begin{lem}\label{lem:offshell}Assume $\|d_{\eta(t)}u(t)\|_{C^\infty(S^1)}\leq \epsilon$ and 
$\|\mu(u(t))\|_{C^\infty(S^1)}\leq \epsilon$ for any $t\in [t_0, \infty)$  as the $\epsilon$ given in Lemma \ref{lem:nearbycrit}. For each $t\in [t_0, \infty)$, there exists a piecewise smooth map $\uu_t: D(2)\to X$ with boundary $u(t)=\uu_t|_{\del D(2)}$ 
satisfying
\begin{enumerate}
\item \begin{enumerate}
\item $\uu_t|_{\overline{D(1)}}\subset \mu^{-1}(0)$ and smooth, with $\uu_t|_{\del D(1)}=z(t)$;
\item $\uu_{t}|_{\overline{D(2)}-D(1)}(s, \theta)=(z(t, \theta), (s-1)JL_{z(t, \theta)}(\xi(t, \theta)))$, 
where  $(s, \theta)$ denotes the polar coordinate for the disk $D(2)$. 
\end{enumerate}

\item Denote by $$
g_t: S^1\to G, \quad g_t(\theta):=h_{\eta(t)}(\theta)\exp(-\theta\eta_0(t))
$$
a $[t_0, \infty)$-family of $[0, 2\pi]$-gauge transformations. Then 
$g_t^{-1}z(t, \cdot)$ lives in the injective radius of $z(t, 0)$. 
\end{enumerate}
Using such $\uu_t$, we define the function $\CL_w:[t_0, \infty)\to \R$ as  
\be\label{eq:L}
\CL_w(t):=-\int_{D(2)}\uu_t^*\omega+\int_{S^1}<\mu(u(t, \theta)), \eta(t, \theta)>d\theta.
\ee
Then $\CL_w$ is smooth in $t$  and 
satisfies the following isoperimetric inequality:
\be\label{eq:isoperimetric}
\CL_w(t)\leq c_0(\|d_{\eta(t)}u(t)\|^2_{p}+c_1\|\mu\circ u(t)\|^2_{\frac{p}{p-1}}),
\ee
where $c_0, c_1>0$ are constants only depending on $(X, \omega, G, \mu, J, M)$ and $1\leq p\leq 2$. 

\end{lem}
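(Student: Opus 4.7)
The plan is to construct $\uu_t$ in two matching pieces, split $\CL_w(t)$ along that decomposition, and bound each summand. The annulus piece $\uu_t|_{\overline{D(2)}\setminus D(1)}$ is prescribed by item (1)(b) of the statement, smooth in $(s,\theta,t)$. For the inner disk, Lemma~\ref{lem:nearbycrit} produces $\eta_0(t)$ so that, for $g_t(\theta):=h_{\eta(t)}(\theta)\exp(-\theta\eta_0(t))$, the pulled-back loop $\tilde z(t,\theta):=g_t^{-1}(\theta)z(t,\theta)$ is a genuine loop---its defect from periodicity lies in $G_{z(t,0)}$---sitting inside the injectivity radius of $z(t,0)$ in $\mu^{-1}(0)$ with respect to the induced metric $\omega(\cdot,J\cdot)|_{\mu^{-1}(0)}$; this gives assertion (2) at once. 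I would cone $\tilde z(t,\cdot)$ off to $z(t,0)$ by radial geodesics in $\mu^{-1}(0)$ to obtain a smooth disk $\tilde v$ bounded by $\tilde z$, then match it to $z(t,\cdot)$ across a thin outer sub-annulus via a smooth family of paths in $G$ from $e$ to $g_t(\theta)$ (possible since $G$ is connected and $g_t$'s defect from periodicity lies in $G_{z(t,0)}$, so everything stays in $\mu^{-1}(0)$ by $G$-invariance). The resulting piecewise smooth disk has boundary $z(t,\cdot)$, and smooth $t$-dependence of all ingredients yields smoothness of $\CL_w$.

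For the isoperimetric estimate, split $\CL_w(t)=I_1+I_2+I_3$ with
\[
I_1:=-\int_{D(1)}\uu_t^*\omega,\qquad I_2:=-\int_{\overline{D(2)}\setminus D(1)}\uu_t^*\omega,\qquad I_3:=\int_{S^1}\langle \mu(u(t)),\eta(t)\rangle\,d\theta.
\]
For $I_2$, work in the normal form of Section~\ref{sec:darboux}: $\omega_0=\pi^*(\omega|_{\mu^{-1}(0)})+\Omega_G$ and $\mu_0(\uu_t(s,\theta))=(s-1)\xi(t,\theta)$. A direct computation using the moment-map identity $\omega_0(L_z\zeta,\cdot)=\langle d\mu_0(\cdot),\zeta\rangle$ yields, on the annulus,
\[
\uu_t^*\omega_0\;=\;d\bigl\langle \mu_0(\uu_t),\beta\bigr\rangle\;+\;R,
\]
where $\beta$ is a 1-form on $\overline{D(2)}\setminus D(1)$ restricting to $0$ on $\{s=1\}$ and to $\eta(t)\,d\theta$ on $\{s=2\}$, and $R$ is a 2-form quadratic in $\partial_\theta z$ and $\xi$. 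Stokes' theorem converts $I_2+I_3$ into $-\int R$, which by H\"older's inequality is bounded by $C\,\|\mu\circ u(t)\|_{p/(p-1)}\bigl(\|d_{\eta(t)}u(t)\|_p+\|\mu\circ u(t)\|_p\bigr)$; Young's inequality then absorbs the cross term into the advertised quadratic form.

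For $I_1$, since $\uu_t|_{\overline{D(1)}}\subset\mu^{-1}(0)$ and $\omega|_{\mu^{-1}(0)}=\pi^*\bar\omega$, we have $I_1=-\int_{D(1)}(\pi\circ\uu_t)^*\bar\omega$: the symplectic area in the orbifold $M$ of a small disk bounded by $\pi\circ z(t,\cdot)$. In a uniformizing chart around $[z(t,0)]\in M$ (available after shrinking $\epsilon_{\reg}$ as in Section~\ref{sec:critical}, exploiting finiteness of the orbit-type set $\Lambda$), the standard isoperimetric inequality on $(M,\bar\omega(\cdot,\bar J\cdot))$ bounds this area by a constant times the squared length of $\pi\circ z(t,\cdot)$. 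That length is in turn controlled by the horizontal component, in the decomposition $T\mu^{-1}(0)=\Img L\oplus H$, of $\partial_\theta z+X_{\eta(t,\theta)}(z)$, which is the $T\mu^{-1}(0)$-part of $d_{\eta(t)}u(t,\theta)$ modulo an error of order $|\mu\circ u|$ accounting for the drift of $u$ off the level set. Summing the bounds for $I_1$ and $I_2+I_3$ gives the stated inequality with constants uniform over $\Lambda$. The main obstacle I anticipate is making the cancellation in $I_2+I_3$ sharp: producing the primitive $\beta$ so that the remainder $R$ is genuinely quadratic rather than merely bounded in the small quantities $d_{\eta(t)}u$ and $\mu\circ u$ requires careful book-keeping in the normal-form coordinates, and it is this quadratic structure of $R$, paired through H\"older, that drives the admissible range $p\in[1,2]$.
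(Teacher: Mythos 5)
Your construction of $\uu_t$ and your proof of assertion (2) coincide with the paper's: both rest on Lemma \ref{lem:nearbycrit}, which makes $g_t^{-1}z(t,\cdot)$ a closed loop (its periodicity defect $k_t$ lies in $G_{u(t,0)}\subset G_{z(t,0)}$) inside the injectivity radius, so it can be coned off in $\mu^{-1}(0)$ and transported back by $g_t$. Where you genuinely diverge is the isoperimetric inequality \eqref{eq:isoperimetric}. The paper first proves the gauge-invariance identity \eqref{eq:small}, then bounds the area of the \emph{entire} disk $-\int_{D(2)}\bar u_t^*\omega$ by the standard isoperimetric inequality in $X$ applied to the short boundary loop $\bar u_t|_{\del D(2)}$, and controls the moment-map term by Young's inequality together with $\|g_t\cdot\eta\|_p\leq\|X_{g_t\cdot\eta}(\bar u_t)\|_p$ (local freeness near $\mu^{-1}(0)$); no cancellation is invoked. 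You instead isolate the annulus and exhibit $I_2+I_3$ as an error term: since $\pi\circ\uu_t$ is $s$-independent there, $\uu_t^*(\pi^*\omega|_{\mu^{-1}(0)})=0$, and $\Omega_G=d(\iota_G^*\Theta_G)$ pulls back to $d\bigl((s-1)\,\omega(JL_z\xi,\pr_G\del_\theta z)\,d\theta\bigr)$; Stokes and the identity $\pr_G\del_\theta z+L_z\eta=\pr_G(\text{base part of }d_\eta u)+O(|\mu\circ u|)$ give $I_2=-I_3+O\bigl(|\mu\circ u|(|d_\eta u|+|\mu\circ u|)\bigr)$, which H\"older with exponents $p$ and $p/(p-1)$ turns into exactly the advertised quadratic form --- so the "obstacle" you flag does go through, and it is precisely this pairing that forces the exponent pattern in \eqref{eq:isoperimetric}. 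For $I_1$ you descend to $(M,\bar\omega)$ and control the projected length by the $H$-component of $d_\eta u$, whereas the paper stays upstairs. Your route buys conceptual clarity (the moment-map term in $\CL_w$ is the boundary term of the exact piece of the normal-form symplectic structure, which is the mechanism behind Proposition \ref{prop:onshell} as well) at the cost of the normal-form bookkeeping; the paper's route avoids that computation but must then absorb $\|d\bar u_t\|_p^2+\|X_{g_t\cdot\eta}\|_p^2$ into $\|d_{g_t\cdot\eta}\bar u_t\|_p^2+\|\mu\|_{p/(p-1)}^2$, a step that silently uses the smallness of the transformed connection $g_t\cdot\eta=\Ad_{\exp(\theta\eta_0)}(\tilde\eta-\eta_0)$. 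Both arguments are sound; yours is an acceptable and arguably more transparent alternative.
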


\begin{proof}From Lemma \ref{lem:nearbycrit}, the loop $g_t^{-1}z(t, \cdot)$ for every $t\in [t_0, \infty)$ lives in the injective radius of $z(t, 0)$ and hence contractible. As a result, we can take a smooth bounding disk within the injective radius of $z(t, 0)$ as
$$
\bar{u}^1_t: D(1)\to \mu^{-1}(0)
$$
whose boundary is this loop $g_t^{-1}z(t, \cdot)$. Then define 
$$
\bar{u}^2_t|_{D(2)-D(1)}(s, \theta)=(g_t^{-1}z(t, \cdot), (s-1)JL_{g_t^{-1}z(t, \cdot)}(\Ad_{g_t^{-1}}\xi(t, \theta))).
$$
Denote by $\bar{u}_t$ the piecewise smooth map connecting $\bar{u}^1_t$ and $\bar{u}^2_t$. 
Then 
$\uu_t:=g_t^{-1}\bar{u}_t: D(2)\to X$ is the piecewise smooth map satisfying the two properties as stated in (1), for every $t\in [t_0, \infty)$. 

Next we prove the function $\CL_w: [t_0, \infty)\to \R$ defined as \eqref{eq:L} satisfies the isoperimetric type inequality \eqref{eq:isoperimetric}.

We first prove the following lemma. 
\begin{lem}\label{lem:gaugeindep}
\bea
\CL_w(t)
&=&-\int_{D(2)}\bar{u}_t^*\omega+\int_{S^1}<\mu(\bar{u}(t, \theta)), (g_t^{-1}\cdot\eta)(t, \theta)>d\theta\label{eq:small}
\eea
\end{lem}
\begin{proof}We calculate for every fixed $t\in [t_0, \infty)$, 
\beastar
&&\int_{D(2)}\bar{u}_t^*\omega-\int_{D(2)}\uu_t^*\omega\\
&=&\int_{D(2)}(g_t\uu_t)^*\omega-\uu_t^*\omega\\
&=&-\int_{D(2)}d\langle\mu\circ \uu_t, g_t^{-1}(dg_t) \rangle\\
&=&-\int_{\del D(2)}\langle\mu\circ u(t, \cdot), g_t^{-1}(dg_t) \rangle.
\eeastar
On the other hand, 
\beastar
&&\int_{S^1}<\mu(\bar{u}(t, \theta)), (g_t^{-1}\cdot\eta)(t, \theta)>d\theta-\int_{S^1}<\mu(u(t, \theta)), \eta(t, \theta)>d\theta\\
&=&\int_{S^1}<\mu(g_t\uu_t(\theta)), (g_t^{-1}\cdot\eta)(t, \theta)>d\theta-\int_{S^1}<\mu(u(t, \theta)), \eta(t, \theta)>d\theta\\
&=&\int_{S^1}<\Ad_{g_t^{-1}}\mu(\uu_t), \Ad_{g_t^{-1}}\eta-(dg_t)g_t^{-1}>-\int_{S^1}<\mu(u(t, \theta)), \eta(t, \theta)>d\theta\\
&=&\int_{S^1}(<\mu(u(t, \theta)), \eta(t, \theta)>-<\mu(u(t, \theta)), g_t^{-1}(dg_t)>)d\theta-\int_{S^1}<\mu(u(t, \theta)), \eta(t, \theta)>d\theta\\
&=&-\int_{S^1}<\mu(u(t, \theta)), g_t^{-1}(dg_t)>)d\theta.
\eeastar
We are done with this lemma. 
\end{proof}
Since the right hand side of \eqref{eq:small} is independent of choices of $\bar{u}_t^1$ but only depends on bounding loop $\tilde{z}_t$ which smoothly depends on $t$, it follows $\CL_w$ is smooth in $t$. 
On the other hand, using Lemma \ref{lem:gaugeindep} and  the metric is $G$-invariant, we only need to prove \eqref{eq:isoperimetric} for $\bar{u}_t$ since $d_{g\cdot\eta}(g^{-1}u)=\ell_{g^{-1*}}(d_\eta u)$ and $\mu$ is $G$-equivariant. 

First from the isoperimetric inequality for small symplectic disks living in the injective radius of $\bar{u}(t, 0)$, we have 
the following standard symplectic isoperimetric inequality for any $1\leq p\leq 2$, 
\be\label{eq:sympisop}
-\int_{D(2)}\bar{u}_t^*\omega\leq c\|d\bar{u}_t|_{\del D(2)}(\theta)\|_{p}^2.
\ee
 The other term, 
 $$
 \int_{S^1}<\mu(\bar{u}(t, \theta)), (g_t\cdot\eta)(t, \theta)>d\theta\leq c'\|\mu(\bar{u}_t)\|_{\frac{p}{p-1}}^2+c'\|g_t\cdot\eta\|_{p}^2
 $$
 On the other hand, since the infinitesimal action of $\CG$ on $X$ is free near $\mu^{-1}(0)$ when $0$ is regular, we have 
 $$
 \|g_t\cdot\eta\|_{p}^2\leq \|X_{g_t\cdot\eta}(\bar{u}_t)\|_{p}^2.
 $$
 Then by taking $c_0, c_1$ large enough, we have 
 $$
 -\int_{D(2)}\bar{u}_t^*\omega+ \int_{S^1}<\mu(\bar{u}(t, \theta)), (g_t\cdot\eta)(t, \theta)>d\theta\leq c_0(\|d_{g_t\cdot\eta}\bar{u}_t|_{\del D(2)}(\theta)\|_{p}^2+c_1\|\mu(\bar{u}_t)\|_{\frac{p}{p-1}}^2).
 $$
The whole proof is done now.
\end{proof}

\begin{rem}\label{rem:isop}This isoperimetric inequality \eqref{eq:isoperimetric} can be made with sharp constants as in \cite{ziltener-exp}.  For example, the constant $c$ in \eqref{eq:sympisop} holds for any $c>\frac{1}{4\pi}$. 
Also, following the scheme as in \cite{ziltener-exp}, one is also able to prove this is a well-defined local functional. In fact, this functional is the same as the one introduced in \cite{L2} (see also \cite{frauenfelder1, frauenfelder2}) by using e.g., $z(t_0, \cdot): S^1\to \mu^{-1}(0)$ with capping $\uu_{t_0}|_{\overline{D(1)}}$, as the reference loop, since changing the reference loops in the same homotopy class doesn't change the local functional. 
We leave them to interested readers since these are not needed for our current application. 
\end{rem}

\subsection{An energy equality for symplectic vortices}

Using Lemma \ref{lem:offshell}, for each pair $w=(u, \eta): [t_0, \infty)\times S^1\to X\times \CG$ satisfying 
\be\label{eq:assump}
\|d_{\eta(t)}u(t)\|_{C^\infty(S^1)}\leq \epsilon, \quad \|\mu(u(t))\|_{C^\infty(S^1)}\leq \epsilon
\ee 
for any $t\in [t_0, \infty)$  as the $\epsilon$ given in Lemma \ref{lem:nearbycrit}, we have constructed a function 
$$
\CL_w:[t_0, \infty)\to [0, \infty)
$$
as defined in Lemma \ref{lem:offshell}, which satisfies the isoperimetric inequality \eqref{eq:isoperimetric}. In particular, we remark that the constants in  inequality \eqref{eq:isoperimetric} are independent of $w$ but only depend on the geometry $(X, \omega, G, J, \mu, M)$. 

Now we further assume $w=(u, \eta)$ with 
$$
u: [t_0, \infty)\times S^1\to X, \quad \eta: [t_0, \infty)\times S^1\to\CG
$$ 
is a (temporal) symplectic vortex over the half cylinder end $[t_0, \infty)\times S^1$.
Then the function $\CL_w$ is related to the YMH energy as stated in the following proposition. 
\begin{prop}\label{prop:onshell}Suppose $w=(u, \eta)$ is a (temporal) symplectic vortex satisfying the assumptions \eqref{eq:assump}. 
Then  for any $[t_1, t_2]\subset[t_0, \infty)$, 
\be\label{eq:onshell}
E(w; [t_1, t_2]\times S^1):=\int_{[t_1, t_2]\times S^1}e(u, \eta)\nu_h=\CL_w(t_1)-\CL_w(t_2),
\ee
where $e(u, \eta)$ is the energy density defined as in \eqref{eq:e}.
\end{prop}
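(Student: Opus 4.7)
\bigskip

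\noindent\textbf{Proof proposal.} The plan is to show that on a temporal vortex the local functional $\CL_w$ is, up to boundary corrections, a primitive of the YMH energy density, and then to use a $3$--chain in $X$ built from the disks $\uu_t$ to convert the bulk symplectic--area contribution into the difference $\int_{D(2)}\uu_{t_2}^*\omega-\int_{D(2)}\uu_{t_1}^*\omega$.

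First I would apply the energy identity \eqref{eq:energyeq}. Since $w=(u,\eta)$ satisfies the symplectic vortex equations, the term $\bigl(|\delbar_{A}u|_h^2+\tfrac12|F_A+*_h\mu(u)|_h^2\bigr)\nu_h$ vanishes identically, so on $[t_1,t_2]\times S^1$ the energy density reduces to
\begin{equation*}
e(u,\eta)\,\nu_h \;=\; u^*\omega \;-\; d\langle\mu(u),A\rangle.
\end{equation*}
Stokes applied to the exact $1$--form $\langle\mu(u),A\rangle=\langle\mu(u),\eta\rangle\,d\theta$ (we are in temporal gauge, so this really is a $1$--form on the strip with no $dt$--component) produces
\begin{equation*}
\int_{[t_1,t_2]\times S^1}d\langle\mu(u),A\rangle \;=\; \int_{S^1}\langle\mu(u(t_2)),\eta(t_2)\rangle\,d\theta-\int_{S^1}\langle\mu(u(t_1)),\eta(t_1)\rangle\,d\theta,
\end{equation*}
which already accounts for the moment--map pieces of $\CL_w(t_1)-\CL_w(t_2)$.

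The remaining input is to identify $\int_{[t_1,t_2]\times S^1}u^*\omega$ with $\int_{D(2)}\uu_{t_2}^*\omega-\int_{D(2)}\uu_{t_1}^*\omega$. For this I would assemble the family $\{\uu_t\}$ from Lemma \ref{lem:offshell} into a piecewise smooth $3$--chain
\begin{equation*}
U:[t_1,t_2]\times D(2)\longrightarrow X, \qquad U(t,s,\theta):=\uu_t(s,\theta),
\end{equation*}
using that $\bar u^1_t$ may be chosen to depend smoothly on $t$ (this was already needed in Lemma \ref{lem:offshell} to know $\CL_w$ is smooth) and that the annular piece $\uu_t|_{\overline{D(2)}\setminus D(1)}$ is given by an explicit formula in $(z(t),\xi(t),\eta(t))$, hence smooth in $t$. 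Since $d\omega=0$, Stokes on $[t_1,t_2]\times D(2)$ gives
\begin{equation*}
0=\int_{[t_1,t_2]\times D(2)}d(U^*\omega)=\int_{D(2)}\uu_{t_2}^*\omega-\int_{D(2)}\uu_{t_1}^*\omega-\int_{[t_1,t_2]\times S^1}u^*\omega,
\end{equation*}
where the boundary contribution from $[t_1,t_2]\times\del D(2)$ is exactly $\int_{[t_1,t_2]\times S^1}u^*\omega$ because $\uu_t|_{\del D(2)}=u(t,\cdot)$. Combining the three displayed identities yields \eqref{eq:onshell}.

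The only technical point requiring care is the regularity of the capping disks $\bar u^1_t\subset\mu^{-1}(0)$ in the $t$--parameter: they must be chosen to depend smoothly (or at least piecewise $C^1$) on $t$, so that $U$ is a legitimate $3$--chain and the Stokes step above is valid. This is the main obstacle, and it is precisely the point already addressed when asserting that $\CL_w$ is smooth in Lemma \ref{lem:offshell}---a smooth choice is possible because $g_t^{-1}z(t,\cdot)$ lies uniformly inside the injectivity radius of $z(t,0)$ by Lemma \ref{lem:nearbycrit}, so e.g.\ the straight--line capping in normal coordinates produces a smoothly varying $\bar u^1_t$. Once this is in place, nothing else in the argument depends on the vortex being on an admissible (as opposed to cylindrical) end, since the vortex equation was used only to eliminate the non-negative term in \eqref{eq:energyeq}.
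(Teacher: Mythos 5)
Your proof is correct, and at bottom it runs on the same two engines as the paper's: the vortex equations annihilate the non-topological term in \eqref{eq:energyeq}, and closedness of $\omega$ together with exactness of $d\langle\mu(u),A\rangle$ pushes the remaining integral onto the capped boundary loops. The difference is in the bookkeeping. The paper keeps $u^*\omega-d\langle\mu(u),A\rangle$ together, replaces the cylinder $u|_{[t_1,t_2]\times S^1}$ by the homotopic (rel boundary) concatenation of $\overline{\uu_{t_1}^2}$, the cylinder of $z$'s, and $\uu_{t_2}^2$, and is then left with the $\omega$-area of a sphere in $\mu^{-1}(0)$ formed by the two inner caps and that cylinder; it kills this term not by filling it in, but by gauge-transforming it to a contractible sphere and using that the symplectic area of surfaces in $\mu^{-1}(0)$ is unchanged under such gauge transformations. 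Your $3$-chain $U$ on $[t_1,t_2]\times D(2)$ packages both steps at once: its annular part $[t_1,t_2]\times(D(2)\setminus D(1))$ is exactly the paper's homotopy, and its inner part $[t_1,t_2]\times D(1)$ is a filling of the paper's sphere. What the paper's route buys is that the sphere step needs no continuity of the caps $\bar u^1_t$ in $t$, only their existence for each fixed $t$; your route hinges on choosing the caps (piecewise) $C^1$ in $t$, which you correctly identify as the one delicate point and justify. Two small caveats: the caps are required by Lemma \ref{lem:offshell}(1)(a) to lie in $\mu^{-1}(0)$, so the ``straight-line capping'' should be performed intrinsically in $\mu^{-1}(0)$ (alternatively, observe that for the Stokes step any smooth family of caps in $X$ suffices, and then invoke the choice-independence of $\CL_w$ already used to prove its smoothness in Lemma \ref{lem:offshell} to compare back to an admissible cap); and one should record that the smooth $t$-dependence of the annular piece rests on the smoothness of $\eta_0(t)$ and $h_{\eta(t)}$ established in Lemma \ref{lem:nearbycrit}. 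With these points in place the argument is complete.
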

\begin{proof}
Denote by $\tilde{\uu}$ the path joining loops $u_{t_1}$ and $u_{t_2}$ by connecting the three pieces 
$\overline{\uu_{t_1}^2}, z_t$, and $\uu_{t_2}^2$, 
where $\uu_t^2$ is as defined in the proof of Lemma \ref{lem:offshell} and the `overline' of $\uu_t^2$ denotes the path by reversing orientation.  
By construction, $u$ and $\tilde{\uu}$ are homotopic relative to boundaries $u(t_1, \cdot)$ and $u(t_2, \cdot)$, and then it follows from \eqref{eq:energyeq} that 
$$
E(w; [t_1, t_2]\times S^1)=\int_{[t_1, t_2]\times S^1}\tilde{\uu}^*\omega-d\langle\mu(\tilde{\uu}), \eta\rangle.
$$

On the other hand, by considering the three parts $\overline{\uu_{t_1}^2}, z_t$, and $\uu_{t_2}^2$ that $\tilde{\uu}$ consists of, we have 
\beastar
\int_{[t_1, t_2]\times S^1}\tilde{\uu}^*\omega-d\langle\mu(\tilde{\uu}), \eta\rangle
&=&[\CL_w(t_1)-(\int_{D(1)}  (\tilde{\uu}_{t_1}^1)^*\omega-d\langle\mu(\tilde{\uu}_{t_1}^1), \eta \rangle)]\\
&+&(\int_{D(1)} z^*\omega-d\langle\mu(z), \eta\rangle)\\
&-&[\CL_w(t_2)-(\int_{D(1)}(\tilde{\uu}_{t_2}^1)^*\omega-d\langle\mu(\tilde{\uu}_{t_2}^1), \eta \rangle)]\\
&=&\CL_w(t_1)-\CL_w(t_2)+\int_{S^2}(\uu^1)^*\omega.
\eeastar
Here the notations $\tilde{\uu}_{t_1}^1$, $\tilde{\uu}_{t_2}^1$ are the same as in the proof of Lemma \ref{lem:offshell},
and  $\uu^1$ is a path of loops constructed by joining $\overline{\uu_{t_1}^1}$, $z$, $\uu_{t_2}^1$. 
In particular, $\uu^1$ can be transformed to a contractible sphere by a $[t_0, \infty)$-family of $[0, 2\pi]$-gauge transformations as 
in Lemma \ref{lem:nearbycrit} and Lemma \ref{lem:offshell}. Notice that $\uu^1$ stays in $\mu^{-1}(0)$, it follows the symplectic area from the last term is invariant under such family of gauge transformations and hence is zero. 
We are done with the proof then. 

\end{proof}

It follows from Proposition \ref{prop:onshell} that $\lim_{t\to \infty}\CL_w(t)=:\CL_w(\infty)$ exists when $w$ is a temporal symplectic vortex (notice that we haven't shown the $C^0$-convergence of $w$). Further, together with \eqref{eq:isoperimetric}, we obtain
\begin{cor}Under the same condition as in Proposition \ref{prop:onshell}, 
$\CL_w(\infty)=0$.
\end{cor}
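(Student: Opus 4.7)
The plan is to combine the monotonicity of $\CL_w$ along the end with a subsequence extraction driven by the isoperimetric inequality \eqref{eq:isoperimetric}. First, Proposition \ref{prop:onshell} gives $E(w;[t_1,t_2]\times S^1)=\CL_w(t_1)-\CL_w(t_2)$; because the YMH energy is nonnegative, $\CL_w$ is nonincreasing, and because the total energy is finite, $\CL_w(t)\geq \CL_w(t_0)-E(w)>-\infty$. Thus $\CL_w(\infty):=\lim_{t\to\infty}\CL_w(t)$ exists and is finite, and so it suffices to produce a single sequence $t_n\to\infty$ along which $\CL_w(t_n)\to 0$.

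The next step is to extract such a sequence from finite-energy alone. In temporal gauge on the admissible end with metric $e^{2bt}(dt^2+d\theta^2)$, the volume form is $e^{2bt}dt\,d\theta$, while $|\cdot|_h^2$ scales as $e^{-2bt}$ on $1$-forms and $e^{-4bt}$ on $2$-forms, so
\[
e(u,\eta)\,\nu_h \;=\; \tfrac{1}{2}\bigl[|\partial_t u|^2 + |\partial_\theta u + X_{\eta}(u)|^2 + e^{-2bt}|\partial_t\eta|^2 + e^{2bt}|\mu\circ u|^2\bigr]\, dt\,d\theta.
\]
Finiteness of $E(w)$ then forces the nonnegative function
\[
F(t) \;:=\; \int_{S^1}\bigl(|\partial_\theta u + X_\eta(u)|^2 + e^{2bt}|\mu\circ u|^2\bigr)\,d\theta
\]
to lie in $L^1([t_0,\infty))$. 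Hence $\liminf_{t\to\infty} F(t)=0$, and picking $t_n\to\infty$ realizing this and using $e^{2bt_n}\geq 1$ gives $\|d_{\eta(t_n)}u(t_n)\|_{L^2(S^1)}^2+\|\mu\circ u(t_n)\|_{L^2(S^1)}^2\to 0$.

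Applying the isoperimetric inequality \eqref{eq:isoperimetric} with $p=2$ (so $p/(p-1)=2$) along this sequence yields
\[
\CL_w(t_n)\;\leq\; c_0\bigl(\|d_{\eta(t_n)}u(t_n)\|_{L^2(S^1)}^2 + c_1\|\mu\circ u(t_n)\|_{L^2(S^1)}^2\bigr)\;\longrightarrow\;0,
\]
and since $\CL_w(t_n)\to\CL_w(\infty)$ along any subsequence, this shows $\CL_w(\infty)\leq 0$. For the matching lower bound $\CL_w(\infty)\geq 0$, I would revisit the estimates inside the proof of Lemma \ref{lem:offshell}: the symplectic isoperimetric inequality applied in the injective radius is in reality two-sided, $\bigl|\int_{D(2)}\bar u_t^*\omega\bigr|\leq c\,\|d\bar u_t|_{\partial D(2)}\|_p^2$, and the Cauchy--Schwarz-type bound on the $\mu$-pairing term is also two-sided, so the same argument upgrades \eqref{eq:isoperimetric} to $|\CL_w(t)|\leq c_0(\|d_\eta u(t)\|_p^2 + c_1\|\mu\circ u(t)\|_{p/(p-1)}^2)$. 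Applying this upgraded inequality along the same $t_n$ gives $|\CL_w(t_n)|\to 0$, hence $\CL_w(\infty)=0$.

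The main care point is precisely that two-sided refinement of \eqref{eq:isoperimetric}: the displayed inequality is written as an upper bound, but the argument above needs control of $\CL_w$ on both sides near infinity. Everything else is soft — existence of the limit is just monotone convergence, and the subsequence extraction needs only the finiteness of $\int F(t)\,dt$, which is dictated by the geometry of the admissible metric ($b\geq 0$ ensures that the weight $e^{2bt}$ in front of $|\mu|^2$ only helps when controlling $\|\mu\circ u(t_n)\|_{L^2(S^1)}$).
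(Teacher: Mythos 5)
Your argument is correct and follows the paper's own (largely implicit) route: existence of the limit from the monotonicity given by Proposition \ref{prop:onshell} plus the finite-energy lower bound, extraction of a sequence $t_n\to\infty$ along which $\|d_{\eta(t_n)}u(t_n)\|_{L^2(S^1)}$ and $\|\mu\circ u(t_n)\|_{L^2(S^1)}$ vanish, and then the isoperimetric inequality \eqref{eq:isoperimetric}. Your observation that the stated one-sided bound must be upgraded to $|\CL_w(t)|\leq c_0(\|d_\eta u(t)\|_p^2+c_1\|\mu\circ u(t)\|_{p/(p-1)}^2)$ to get $\CL_w(\infty)\geq 0$ is a genuine and correct refinement of a point the paper glosses over, and it does follow from the same proof of Lemma \ref{lem:offshell} since both the symplectic isoperimetric inequality and the H\"older bound on the moment-map pairing are two-sided.
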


We end this section by remarking that so far we haven't required any property on the metric over $[t_0, \infty)\times S^1$.

\section{The asymptotic convergence}\label{sec:thm}

In this section, we prove the main theorem \ref{thm:main}, which we split into the following two statements. 

\begin{prop}\label{prop:main}Assume $(P, w)=(P, u, A)$ is a admissible finite energy symplectic vortex over the punctured Riemann surface $(\dot\Sigma, j, h)$ with $k$-punctures. 
Then there exist some gauge transformation $\Phi\in \fg_P$,  $k$ critical loops $x_\infty^i=(z_\infty^i, \eta_\infty^i)$, $i=1, \cdots, k$, and some constants $C>0$, $\delta>0$ which only depend on $(X, \omega, G, \mu, J, M)$, such that 
$\Phi\cdot w=(\Phi^*u, \Phi^*A)$ is temporal and for any $\theta\in S^1$
\begin{enumerate}
\item $|d_{\Phi^*A} \Phi^*u(t, \theta)|\leq Ce^{-\delta t}$, 
\item $|\mu(\Phi^*u)(t, \theta)|\leq Ce^{-\delta t-bt}$,
\item $|F_{\Phi^*A}(t, \theta)|\leq Ce^{-\delta t+bt}$,
\item $\dist(\Phi\cdot u(t, \theta), z_\infty(\theta))\leq Ce^{-\delta t}$,
\item $\|\Phi^*A(t, \cdot)-\eta_\infty(\cdot)\|_{L^p(S^1)}\leq Ce^{-(\delta+b(\frac{2}{p}-1))t}$, for $p\geq 2$ and $\delta+b(\frac{2}{p}-1)>0$.
\end{enumerate}
Here we use $z_\infty, \eta_\infty, \delta, b$ to denote the ones with corresponding superscripts $i$'s, $i=1, \cdots, k$.
\end{prop}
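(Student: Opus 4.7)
I would reduce to a single cylinder end at a time. On each end identified with $([0,\infty) \times S^1, e^{2bt}(dt^2+d\theta^2))$, first trivialize $P$ and use the gauge transformation constructed at the end of Section \ref{sec:vortex} to put $w$ in temporal gauge $A = \eta(t,\theta)\, d\theta$. In flat coordinates the vortex equations become $\partial_t u + J(\partial_\theta u + X_\eta(u)) = 0$ and $\partial_t \eta + e^{2bt}\mu(u) = 0$, and the energy density reads $\tfrac12(|d_A u|^2 + e^{-2bt}|F_A|^2_{\mathrm{flat}} + e^{2bt}|\mu(u)|^2)\, dt\, d\theta$. Standard $\epsilon$-regularity and no-bubbling arguments for finite energy vortices (in the spirit of \cite{cieliebak-gaio-salamon}, adapted to admissible metrics) then give uniform $C^0$ control past some $t_0$ and $|d_A u|, |\mu(u)|, e^{-bt}|F_A| \to 0$ uniformly in $\theta$ as $t\to\infty$. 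Hence $u(t,\cdot)$ eventually lies inside the $G$-invariant tubular neighborhood $N^\epsilon$ of Section \ref{sec:darboux}, the hypotheses of Lemma \ref{lem:offshell} hold, and the local functional $\CL_w(t)$ is well defined and smooth.

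The key step is to show exponential decay of the tail energy $f(t) := E(w;[t,\infty)\times S^1)$. By Proposition \ref{prop:onshell}, $f(t) = \CL_w(t)$, and by the corollary following it, $f(\infty) = 0$. Applying the isoperimetric inequality \eqref{eq:isoperimetric} with $p = 2$ yields
\[
f(t) \le c_0\bigl(\|d_\eta u(t)\|_{L^2(S^1)}^2 + c_1\|\mu \circ u(t)\|_{L^2(S^1)}^2\bigr) \le C\bigl(-f'(t)\bigr),
\]
where the second inequality uses the explicit expression of $-f'(t) = \int_{S^1} e(u,\eta)(t,\cdot)\,e^{2bt}d\theta$ and absorbs the weight $e^{2bt}$ in front of the $\mu$-term. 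Integrating this ODE gives $f(t) \le C e^{-2\delta t}$. The sharp constant on $\delta$ is obtained by replacing the crude Poincar\'e-type bound in the isoperimetric inequality by the spectral gap of the formal Hessian from Proposition \ref{prop:hess}: since the smallest positive eigenvalue of $\Hess_y|_{\CV_y}$ is $1/|\Hol_\eta|$, any $\delta < 1/|\Hol_{\eta_\infty}|$ is admissible, consistent with the remark after Theorem \ref{thm:main}.

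Finally, to upgrade the integrated decay $f(t) \le C e^{-2\delta t}$ to the pointwise and $L^p$ statements, I would apply interior elliptic mean value estimates to the vortex equations on slabs $[t-1,t+1]\times S^1$; the different weights in the energy density then produce precisely the rates $e^{-\delta t}$ for $|d_A u|$, $e^{-(\delta+b)t}$ for $|\mu(u)|$, and $e^{-(\delta-b)t}$ for $|F_A|$, proving (1)--(3). Items (4) and (5) follow by integrating $\partial_t u$ and $\partial_t \eta$ in $t$: the first yields exponential $C^0$-convergence $u(t,\cdot) \to z_\infty(\cdot)$, while the second, weighted appropriately for $L^p(S^1)$, telescopes into a Cauchy sequence $\eta(t,\cdot) \to \eta_\infty$ with the claimed rate $e^{-(\delta + b(2/p-1))t}$. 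Passing to the limit in the temporal gauge vortex equations confirms $(z_\infty,\eta_\infty) \in \Crit$. The main obstacle I expect is the interplay between the Hessian spectral analysis at a non-free orbit (which requires the orbifold identification $\ker D_y \widetilde\Upsilon \cap \CV_y \cong T_{[z(0)]}M^{(\Hol(\eta_0))}$ of Proposition \ref{prop:hess}) and the uniformity of the constants in the isoperimetric inequality across the finitely many orbit types in $\mu^{-1}(0)$; this is precisely what the free-action hypothesis in Ziltener's approach bypasses, and what forces the detour through the normal form of Section \ref{sec:darboux} and the path-dependent capping construction of Section \ref{sec:functional}.
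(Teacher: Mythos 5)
Your overall architecture coincides with the paper's: reduce to one temporal end, use the local functional $\CL_w$ of Lemma \ref{lem:offshell} together with the energy identity of Proposition \ref{prop:onshell} to convert the isoperimetric inequality \eqref{eq:isoperimetric} (with $p=2$) into the differential inequality $E(w;[t,\infty))\leq -\tfrac{1}{2\delta}\tfrac{d}{dt}E(w;[t,\infty))$, integrate to get exponential decay of the tail energy, recover the pointwise bounds (1)--(3) from the a priori estimate of Lemma \ref{lem:apriori}, and obtain (4)--(5) by integrating $\frac{\del u}{\del t}$ and $\frac{\del \eta}{\del t}$ and telescoping. That is exactly the paper's route, so on items (1)--(4) your plan is sound.

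The genuine gap is in item (5). Integrating $\frac{\del\eta}{\del t}=-e^{2bt}\mu(u)$ against the pointwise bound $|\mu(u)|\leq Ce^{-(\delta+b)t}$ from (2) only yields $\|\eta(t,\cdot)-\eta_\infty\|_{L^p(S^1)}\leq Ce^{-(\delta-b)t}$, which is the $p=\infty$ rate and is strictly weaker than the claimed $e^{-(\delta+b(\frac{2}{p}-1))t}$ for every finite $p\geq 2$ (for $p=2$ the claim is rate $\delta$, not $\delta-b$, and for $\delta\leq b$ your version gives nothing). Saying the estimate is ``weighted appropriately for $L^p(S^1)$'' does not produce the missing factor $e^{-\frac{2b}{p}t}$: the $L^p(S^1)$ norm of $\mu(u)$ must decay \emph{faster} than its sup norm by that factor, and this is a nontrivial a priori estimate, not an interpolation. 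The paper obtains it from Lemma \ref{lem:Lp} (Gaio--Salamon's rescaled estimate applied to the translated vortices $w_i(t,\theta)=w(t+t_i,\theta)$ with $\epsilon=e^{-bt_i}$), which gives $\|\mu(u_i)\|_{L^p(K_0)}\leq ce^{-(\delta+b(\frac{2}{p}+1))t_i}$; the extra gain of $\epsilon^{\frac{2}{p}}$ over the sup bound is precisely what makes the telescoping sum for $\eta_i(0,\cdot)$ converge at the stated rate. You need to invoke or reprove that estimate. A secondary point: your claim that the sharp rate $\delta<1/|\Hol_{\eta_\infty}|$ follows by replacing the isoperimetric constant with the spectral gap of $\Hess_y$ from Proposition \ref{prop:hess} is exactly what Remark \ref{rem:techreason} flags as problematic when $b>0$, since one cannot extract a convergent subsequence to localize near a critical loop and run the Morse--Bott argument directly; fortunately Proposition \ref{prop:main} only asserts some geometric $\delta>0$, so this does not affect the statement itself.
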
 

\begin{cor}\label{cor:main}Under the same assumption as in Proposition \ref{prop:main}, Then there exist some gauge transformation $\Phi\in \fg_P$,  $k$ critical loops $x_\infty^i=(z_\infty^i, \eta_\infty^i)$ of \textbf{constant connections}, $i=1, \cdots, k$,  and some constants $C>0$, $\delta_i>0$ which only depend on $(X, \omega, G, \mu, J, M)$, such that 
$\Phi\cdot w=(\Phi^*u, \Phi^*A)$ is temporal and for any $\theta\in S^1$
\begin{enumerate}
\item $|d_{\Phi^*A} \Phi^*u(t, \theta)|\leq Ce^{-\delta t}$,
\item $|\mu(\Phi^*u)(t, \theta)|\leq Ce^{-\delta t-bt}$,
\item $|F_{\Phi^*A}(t, \theta)|\leq Ce^{-\delta t+bt}$,
\item $\|\dist(\Phi\cdot u(t, \cdot), z_\infty(\cdot))\|_{L^p(S^1)}+\|\Phi^*A(t, \cdot)-\eta_\infty(\cdot)\|_{L^p(S^1)}\leq Ce^{-(\delta+b(\frac{2}{p}-1)})t$, for $p\geq 2$ and $\delta+b(\frac{2}{p}-1)>0$.
\end{enumerate}
Here we use $z_\infty, \eta_\infty, \delta, b$ to denote the ones with corresponding superscripts $i$'s, $i=1, \cdots, k$.

\end{cor}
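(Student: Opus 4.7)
The plan is to deduce Corollary~\ref{cor:main} from Proposition~\ref{prop:main} by composing with an additional $t$-independent gauge transformation on each end, whose sole purpose is to straighten the limiting connection into a constant one. First, apply Proposition~\ref{prop:main} to obtain $\Phi_0\in\fg_P$ such that $\Phi_0\cdot w$ is temporal on every end $E_i\cong[0,\infty)\times S^1$ and converges to a (possibly non-constant) critical loop $(z_\infty^i,\eta_\infty^i)$ at the rates stated there. On each end, take the based loop $h_{\eta_\infty^i}:S^1\to G$ from \eqref{eq:h}, so that $h_{\eta_\infty^i}\cdot\eta_\infty^i\equiv -\log\Hol_{\eta_\infty^i}\in\CG$ is a constant element.

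Extend each $h_{\eta_\infty^i}$ $t$-independently over the end $E_i$, and glue these local gauge transformations with a smooth partition of unity on the compact part of $\dot\Sigma$ to obtain a global $\Psi\in\fg_P$; the choice of extension on the compact part is immaterial since we are only interested in the asymptotic behavior. Set $\Phi:=\Phi_0\cdot\Psi$. Because $\Psi|_{E_i}$ is independent of $t$ and $\Phi_0^*A$ is temporal on the end, the transformed connection $\Phi^*A$ remains temporal. The new asymptotic critical loop on $E_i$ is
\[
(\tilde z_\infty^i,\tilde\eta_\infty^i)=\bigl(h_{\eta_\infty^i}^{-1}z_\infty^i,\,-\log\Hol_{\eta_\infty^i}\bigr),
\]
a critical loop of constant connection, giving the data required by the corollary.

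For the decay estimates, items (1), (2) and (3) are pointwise $G$-invariant quantities because $J$, the metric $\omega(\cdot,J\cdot)$ on $X$, and the moment map $\mu$ are all $G$-equivariant; hence $|d_{\Phi^*A}\Phi^*u|$, $|\mu\circ\Phi^*u|$ and $|F_{\Phi^*A}|$ agree pointwise with their $\Phi_0$-counterparts and inherit the bounds from Proposition~\ref{prop:main}. For item (4), the $G$-invariance of the metric on $X$ gives $\dist(\Phi\cdot u(t,\theta),\tilde z_\infty(\theta))=\dist(\Phi_0\cdot u(t,\theta),z_\infty(\theta))$, so the pointwise bound $Ce^{-\delta t}$ from item~(4) of Proposition~\ref{prop:main} controls the $L^p(S^1)$ norm; since $p\geq 2$ and $b\geq 0$ imply $b\bigl(\frac{2}{p}-1\bigr)\leq 0$, we have $e^{-\delta t}\leq e^{-(\delta+b(2/p-1))t}$, as required. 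The difference $\Phi^*A-\tilde\eta_\infty$ is the pointwise $\Ad$-conjugate of $\Phi_0^*A-\eta_\infty$, so its $L^p(S^1)$ norm is preserved and item~(5) of Proposition~\ref{prop:main} supplies the remaining part of~(4).

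No essentially new analysis is required, since the substantive work is already done in Proposition~\ref{prop:main} and the upgrade to constant connections is a clean gauge-theoretic manipulation. The only point needing minimal verification is that the $t$-independent gauge transformations on the ends can be globalized to a smooth $\Psi\in\fg_P$, which is routine.
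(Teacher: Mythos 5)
Your proposal is correct and follows essentially the same route as the paper: after invoking Proposition \ref{prop:main}, one composes with the $t$-independent based gauge transformation $h_{\eta_\infty}$ from \eqref{eq:h} on each end, which sends $\eta_\infty$ to the constant connection $-\Log\Hol_{\eta_\infty}$ while preserving temporality and all the gauge-invariant decay estimates. Your additional remarks on globalizing the end-wise gauge transformations and on the sign of $b(\tfrac{2}{p}-1)$ are correct details that the paper leaves implicit.
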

The rest of this section is devoted to the proof of them.

\subsection{The exponential decay of the $b$-energy density}

First we notice that it is enough to prove Theorem \ref{thm:main} over one end, so WLOG, we assume $w=(u, \eta)$ is a finite energy temporal symplectic vortex  over half cylinder $[0, \infty)\times S^1$ with respect to the metric 
$$
h=e^{2b(t)}(dt^2+d\theta^2), \quad (t, \theta)\in [0, \infty)\times S^1.
$$ 
Define the $b$-energy density as 
\bea\label{eq:eb}
e_b(w)&:& [0, \infty)\times S^1\to \R\\
e_b(w)&=&\frac{1}{2}(|d_\eta u|^2+e^{-2b(t)}|F_\eta|^2+e^{2b(t)}|\mu\circ u|^2),
\eea
where $|\cdot|$ denotes the norm with respect to the standard cylinder metric $dt^2+d\theta^2$. 
Then it follows that the YMH energy of $w$ over $[0, \infty)\times S^1$ is  
$$
E(u, \eta)=\int_{[0, \infty)\times S^1}e_b(w)dtd\theta<\infty.
$$
The following lemma is proved by Ziltener as \cite[Lemma 3.3]{ziltener-exp} based on the a priori estimates given by Gaio-Salamon as in \cite[Section 9]{gaio-salamon}.  We remark that both  \cite[Lemma 3.3]{ziltener-exp} and \cite[Section 9]{gaio-salamon} assume that the group $G$-action is free on the level set $\mu^{-1}(0)$. However in fact it applies to the general case by removing the free action assumption without changing any word, since the only assumption needed in the proof is the properness of moment map $\mu$ and the infinitesimal Lie algebra $\CG$ action is free which is satisfied by assuming $0$ is a regular value. 
\begin{lem}\label{lem:apriori}There exists some constant $c>0$ which only depends on $(X, \omega, G, \mu)$, such that for any finite energy temporal symplectic vortex $w=(u, \eta)$ over half cylinder $[0, \infty)\times S^1$ with respect to the metric 
$$
h=e^{2bt}(dt^2+d\theta^2), \quad (t, \theta)\in [0, \infty)\times S^1,\quad b\geq 0,
$$ 
there exists a constant $t_0>1$ such that the $b$-energy density 
$$
e_b(w)(t, \theta)\leq cE(w; [t-1, t+1]\times S^1), \quad \text{ for any } t\geq t_0.
$$
\end{lem}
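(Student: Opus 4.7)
The plan is to adapt the $\epsilon$-regularity / mean value argument of Gaio--Salamon \cite{gaio-salamon} and Ziltener \cite{ziltener-exp} to the present locally-free setting. On the end $[0,\infty)\times S^1$ equipped with the admissible metric $h = e^{2bt}(dt^2+d\theta^2)$, the YMH energy density with respect to $h$, multiplied by the $h$-volume form, equals precisely $e_b(w)\, dt\, d\theta$ by the definition in \eqref{eq:eb}. So the target inequality is a pointwise-versus-integral bound for the flat-cylinder quantity $e_b(w)$, and we may work throughout on the flat cylinder with the conformal factor absorbed into the density.

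The first step is to derive a Bochner-type differential inequality for $e_b(w)$. Using the vortex equations $\bar\partial_A u = 0$ and $F_A + *_h \mu(u) = 0$, the $G$-invariance of $(\omega, J)$, and the K\"ahler identities on the flat cylinder, a direct computation following \cite[\S 9]{gaio-salamon} yields an inequality of the shape
$$
-\Delta\, e_b(w) \leq C_1\, e_b(w)^2 + C_2\, e_b(w)
$$
on $[1,\infty)\times S^1$, where $\Delta = \partial_t^2 + \partial_\theta^2$ and $C_1, C_2$ depend only on $(X, \omega, J, G, \mu)$ through $C^2$ bounds on the target data. This step uses only the compactness of $X$ together with the properness of $\mu$, which controls the tensors $\mu$, $d\mu$, $X_\xi$, $\nabla J$, etc. The free-action hypothesis on $\mu^{-1}(0)$ is not invoked anywhere in the derivation.

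The second step is the standard mean-value / $\epsilon$-regularity inequality: there exists $\epsilon_0 > 0$, depending only on $C_1, C_2$ (hence only on the fixed geometry), such that every nonnegative $f$ on a unit disk $B_1$ satisfying $-\Delta f \leq C_1 f^2 + C_2 f$ and $\int_{B_1} f \leq \epsilon_0$ obeys $f(x_0) \leq c\, \int_{B_1} f$ at the center. Applying this to $e_b(w)$ on disks of radius $1$ around each $(t, \theta)$, contained in $[t-1, t+1] \times S^1$, and using that the tail $\int_{[t-1, t+1]\times S^1} e_b(w)\, dt\, d\theta \to 0$ as $t \to \infty$ (since $E(w) = \int e_b(w)\, dt\, d\theta < \infty$), delivers the claimed pointwise bound for all $t \geq t_0$, with $t_0$ chosen large enough that every such local integral lies below $\epsilon_0$.

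The main obstacle I anticipate is verifying that the Bochner computation of \cite[\S 9]{gaio-salamon}, originally written under the assumption that $G$ acts freely on $\mu^{-1}(0)$, remains valid in the locally-free case. Inspecting that derivation, it only uses $C^2$ control of the ambient data coming from compactness of $X$ and properness of $\mu$, and never requires inverting the infinitesimal $G$-action globally on $\mu^{-1}(0)$; hence it carries over verbatim under the weaker hypothesis that $0$ is a regular value of $\mu$ (so the infinitesimal $\CG$-action on $\mu^{-1}(0)$ is free, which is what the algebra actually needs). A minor bookkeeping point is to ensure the conformal factor $e^{2bt}$ is absorbed into the density $e_b(w)$ rather than appearing on the Laplacian, so that the Bochner inequality is stated on the flat cylinder with constants independent of $b$; this is exactly how $e_b(w)$ is defined in \eqref{eq:eb}, and is what makes the same mean-value argument apply uniformly for all $b \geq 0$.
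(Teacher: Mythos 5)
Your proposal is correct and follows essentially the same route as the paper, which simply invokes Ziltener's Lemma 3.3 (itself built on the a priori estimates of Gaio--Salamon, Section 9) and observes, exactly as you do, that the free-action hypothesis on $\mu^{-1}(0)$ is never used there --- only properness of $\mu$ and freeness of the infinitesimal $\CG$-action near the zero level, which follow from $0$ being a regular value. Your unpacking of that citation into the Bochner-type inequality for $e_b(w)$ plus the mean-value/$\epsilon$-regularity lemma, with the conformal factor absorbed into the density so the argument runs uniformly for $b\geq 0$, is precisely the content of the cited results.
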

\begin{rem}The admissible condition that is required as in  \cite[Lemma 3.3]{ziltener-exp} is more general than our current admissible condition. We restrict to this relatively simpler condition, i.e., to require the linearity of the exponent function $b(t)=bt$ with $b\geq 0$ in the conformal metric,  is because such cases are already enough for the expected applications in constructing corresponding moduli spaces. 
\end{rem}

In particular, followed from this Lemma, by assuming the metric $h$ is admissible, i.e., $b\geq 0$, we can make $t_0$ large enough, such that the assumptions \eqref{eq:assump} are satisfied, and as a consequence, the function $\CL_w: [t_0, \infty)\to [0, \infty)$ can be defined as given in \eqref{eq:L}.

Applying Proposition \ref{prop:onshell} and \eqref{eq:isoperimetric} (with $p=2$), we estimate
\beastar
E(w; [t, \infty))&=&\CL_w(t)\\
&\leq&c_0(\|d_{\eta}u(t)\|^2_{L^2(S^1)}+c_1\|\mu\circ(u)(t)\|^2_{L^2(S^1)})\\
&\leq&\frac{1}{2\delta}(\frac{1}{2}\|d_{\eta}u(t)\|^2_{L^2(S^1)}+e^{2b(t)}\|\mu\circ(u)(t)\|^2_{L^2(S^1)})\\
&=&-\frac{1}{2\delta}\frac{d}{dt}\int^\infty_t\int_{S^1}e_{b}(w)d\theta dt\\
&=&-\frac{1}{2\delta}\frac{d}{dt}E(w; [t, \infty)),
\eeastar
where $\delta>0$ is some constant determined by $c_0, c_1$. 
We remark here, for the second inequality uses the assumption that the metric $h$ is admissible. 
As a result, 
\be\label{eq:Edecay}
E(w; [t, \infty))\leq E(w)e^{-2\delta t} \text{ for any } t\geq t_0,
\ee
where $E(w)$ is the total energy of $w$ over $[0, \infty)\times S^1$.  

Then apply Lemma \ref{lem:apriori} again, we obtain that there exists some constant $C>0$ which only depends on $(X, \omega, G, J, \mu, M)$, such that 
\bea
|d_\eta u(t, \theta)|&\leq& Ce^{-\delta t}\label{eq:udecay}\\
|\mu(u)(t, \theta)|&\leq& Ce^{-\delta t-bt}\label{eq:mudecay}\\
|F_\eta(t, \theta)|&\leq& Ce^{-\delta t+bt}.\label{eq:Fdecay}
\eea

\subsection{The exponential convergence of $w$ to a critical loop}\label{sec:c0decay}

To obtain the limiting critical loop, we look at the translated sequences as below. For any sequence $\{t_i\}$ with $t_i\to \infty$ as $i\to\infty$, $i\in \CI=\Z \text{ or } \R $, define 
$$
w_i(t, \theta)=(u_i(t, \theta), \eta_i(t, \theta)):=w(t+t_i, \theta), \quad t\in [-t_i, \infty).
$$
A moment of checking shows that $\{w_i\}$ satisfy the equations 
\beastar
\frac{\del u_i}{\del t}+J(u_i)(\frac{\del u_i}{\del\theta}+X_{\eta_i}(u_i))&=&0\\
\frac{\del \eta_i}{\del t}+e^{2bt+2bt_i}\mu(u_i)&=&0,
\eeastar
for $(t, \theta)\in [-t_i, \infty)\times S^1$. 
In another word, each $w_i$ is a temporal symplectic vortex over $([-t_i, \infty)\times S^1, j_0, e^{2b_i(t)}(dt^2+d\theta^2))$ with 
$b_i(t)=b(t+t_i)$. Moreover,  they have a uniform energy bound, as for each $i$ the energy 
$$
E_{b_i}(w_i|_{K\times S^1})\leq E_b(w)\leq \infty
$$
over any compact subset $K\subset [-t_i, \infty)$.  Here we use $E_{(\cdot)}$ to distinguish the energy with respect to different metrics as $e^{2(\cdot)}(dt^2+d\theta^2)$.

The following lemma follows from \cite[Lemma 9.3]{gaio-salamon} by interpreting in cylinder coordinates which is adapted to our current situation. We remark that from \cite[Lemma 9.3]{gaio-salamon}, one can also obtain higher order $L^p$ bound which we don't list here since they are not used in this paper. 
\begin{lem}\label{lem:Lp}
For any $C_0>0$, there exist some constants $\epsilon_0>0$ and $c>0$, such that 
any temporal $(u, \eta)$ with $0<\epsilon\leq\epsilon_0$ satisfying the following equations
\bea\label{eq:translationeq}
\frac{\del u}{\del t}+J(u)(\frac{\del u}{\del\theta}+X_{\eta}(u))&=&0\nonumber\\
\frac{\del \eta}{\del t}+e^{2bt}\epsilon^{-2}\mu(u)&=&0
\eea
over $K:=[-2, 2]\times S^1$ together with $$
\|d_\eta u\|_{L^\infty(K)}+\epsilon^{-1}\|\mu(u)\|_{L^\infty(K)}<C_0, 
$$
we have over $K_0:=[0, 1]\times S^1$, $2\leq p\leq \infty$,
\bea
\|\mu(u)\|_{L^p(K_0)}&\leq& c\epsilon^{\frac{2}{p}+1}\|\frac{\del u}{\del t}\|_{L^2(K)}+c\epsilon^{\frac{2}{p}}\|\mu(u)\|_{L^2(K)}\label{eq:apriori1}.%\\
%\|\red{\nabla_t\frac{\del u}{\del t}}\|_{L^p(K_0)}&\leq&c\epsilon^{\frac{2}{p}}\|\frac{\del u}{\del t}\|_{L^2(K)}+c\epsilon^{\frac{2}{p}-1}\|\mu(u)\|_{L^2(K)}\label{eq:apriori2}.
\eea
Here all norms are with respect to the standard cylindrical metric $dt^2+d\theta^2$. 

In particular, if $b=0$, one can make $\epsilon_0=1$. 
\end{lem}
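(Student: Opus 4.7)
The plan is to reduce the statement to \cite[Lemma 9.3]{gaio-salamon}, whose proof applies essentially verbatim once two modifications are understood: replacing the free-action hypothesis on $\mu^{-1}(0)$ by regularity of the value $0$, and handling the conformal factor $e^{2bt}$. The first step is to derive from the two vortex equations a second-order elliptic equation for $\mu(u)$. Differentiating $\mu(u)$ in $t$ and using the first equation in \eqref{eq:translationeq}, together with the moment-map identity $d\mu \circ J \circ L_u = L_u^* L_u$ (where $L_u: \CG\to T_u X$, $L_u(\xi)=X_\xi(u)$), one obtains
\begin{equation*}
\partial_t \mu(u) = -d\mu\circ J(\partial_\theta u) - L_u^* L_u(\eta).
\end{equation*}
Differentiating once more in $t$, substituting the second equation $\partial_t \eta = -e^{2bt}\epsilon^{-2}\mu(u)$, and combining with $\partial_\theta^2\mu(u)$, leads to a Laplace-type equation of the form
\begin{equation*}
\Delta\mu(u) - e^{2bt}\epsilon^{-2} L_u^* L_u\,\mu(u) = R(u,\eta,d_\eta u),
\end{equation*}
where the right-hand side $R$ is pointwise controlled, under the hypothesis $\|d_\eta u\|_{L^\infty(K)} + \epsilon^{-1}\|\mu(u)\|_{L^\infty(K)} \leq C_0$, by quadratic factors in $d_\eta u$ and $\epsilon^{-1}\mu(u)$.

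The crucial coercive feature is that $L_u^* L_u$ is uniformly positive definite whenever $u$ stays in a sufficiently small neighborhood of $\mu^{-1}(0)$: since $0$ is a regular value, the map $L_z$ is injective for every $z\in \mu^{-1}(0)$, and by compactness this injectivity extends with a uniform lower bound to a neighborhood. This is precisely the place where the original Gaio--Salamon argument invoked freeness of the $G$-action, and regularity of the value $0$ serves as a direct replacement without modifying any subsequent step. The $L^\infty$-smallness built into the hypothesis, together with $\epsilon\leq\epsilon_0$ small, keeps $u$ in this neighborhood. On the compact domain $K=[-2,2]\times S^1$, the conformal factor $e^{2bt}$ is bounded above and below by $e^{-4b}$ and $e^{4b}$, so it contributes only harmless $b$-dependent constants to all estimates.

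With the elliptic equation in hand, the claimed $L^p$-bound follows from the standard interior $L^p$-machinery (Calderon--Zygmund for the Laplacian plus Sobolev embedding). The $\epsilon$-bookkeeping is handled by a rescaling $(t,\theta) \mapsto (\epsilon^{-1}t, \epsilon^{-1}\theta)$ that normalizes the coefficient $\epsilon^{-2}$ of the zeroth-order term: the change of variables produces a Jacobian factor $\epsilon^{2/p}$ in the $L^p$-norm, and the extra $\epsilon$ multiplying $\|\partial_t u\|_{L^2(K)}$ reflects the scaling of one derivative against the $L^2$-norm. The final assertion that $\epsilon_0=1$ for $b=0$ reflects that in this case $e^{2bt}\equiv 1$, so no additional smallness is needed to absorb $b$-dependent constants.

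The main obstacle is not analytic but purely bookkeeping. One must verify that every inequality in Gaio--Salamon's proof remains valid once freeness is traded for regularity of $0$ (only the coercivity of $L_u^* L_u$ uses it, and this is recovered), and that the $\epsilon$- and $b$-powers come out exactly as stated after the rescaling. No new analytic ingredient is required beyond what is already present in \cite{gaio-salamon} and \cite{ziltener-exp}.
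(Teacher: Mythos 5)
Your proposal is correct in substance but takes a genuinely different route from the paper. The paper's proof of Lemma \ref{lem:Lp} is a two-line reduction: it passes to the coordinate $z=e^{t+i\theta}$, under which the admissible metric becomes a conformal metric on a plane annulus with factor $\lambda=e^{bt-t}$, and then invokes \cite[Lemma 9.3]{gaio-salamon} verbatim, noting only that the bounded coordinate change affects nothing but the constant $c$ and leaves the powers of $\epsilon$ untouched. You instead unpack the proof of that cited lemma directly on the cylinder: the second-order equation $\Delta\mu(u)-e^{2bt}\epsilon^{-2}L_u^*L_u\,\mu(u)=R$ obtained from the two vortex equations and the identity $d\mu\circ J=L_u^*$, the uniform coercivity of $L_u^*L_u$ near $\mu^{-1}(0)$, and the rescaled interior $L^p$ estimates. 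What your route buys is self-containedness and, importantly, an explicit identification of the \emph{only} place where Gaio--Salamon's free-action hypothesis enters (coercivity of $L_u^*L_u$, recovered from $0$ being a regular value); the paper makes exactly this observation, but in the remark preceding Lemma \ref{lem:apriori} rather than inside this proof. What the paper's route buys is brevity and the reassurance that the $\epsilon$-powers in \eqref{eq:apriori1} are literally those of the cited statement rather than the output of a re-done bookkeeping. Two soft spots in your write-up, neither fatal: the confinement of $u$ to the coercive neighborhood uses $|\mu(u)|<C_0\epsilon\leq C_0\epsilon_0$, which is fine for $\epsilon_0$ small but makes your one-sentence justification of ``$\epsilon_0=1$ when $b=0$'' too quick (for $\epsilon$ of order one the $L^\infty$ hypothesis no longer localizes $u$ near $\mu^{-1}(0)$, so that case needs the precise form of the hypotheses in \cite[Lemma 9.3]{gaio-salamon} rather than your coercivity argument); and the commutator terms coming from $\partial_\theta\mu(u)$ versus the covariant derivative $\partial_\theta\mu(u)+[\eta,\mu(u)]$ should be acknowledged as part of the remainder $R$.
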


\begin{proof}
We write down the equation in the $\C$-coordinates as $z=e^{t+i\theta}$ and plug
$\lambda=e^{bt-t}$ into the inequality as given in \cite[Lemma 9.3]{gaio-salamon}, then the desired inequalities follow.
In particular, the powers of $\epsilon$ stay the same as in \cite[Lemma 9.3]{gaio-salamon}, since the regions $K, K_0$ we are considering here are both bounded which makes the coordinate change only effect coefficient $c$. 
\end{proof}
Now apply \eqref{eq:apriori1} by taking $\epsilon=e^{-bt_i}$, it immediately follows that for each $w_i$, 
\be\label{eq:mu}
\|\mu(u_i)\|_{L^p(K_0)}\leq ce^{-(\delta+b(\frac{2}{p}+1))t_i}.
\ee
%and further from \eqref{eq:translationeq}, 
%$$
%\|\frac{\del\eta_i}{\del t}\|_{L^p(K_0)}\leq 100ce^{-\red{(\delta+b(\frac{2}{p}-1))}t_i}.
%$$

From now on, we assuming that $p\geq 2$ such that $\delta':=\delta+b(\frac{2}{p}-1)>0$ and estimate for $\eta_i$, $i=1, 2, \cdots$ as follows. For any $\theta\in S^1$, 
\beastar
|\eta_i(1, \theta)-\eta_i(0, \theta)|&\leq&\int_0^1|\frac{\del\eta_i}{\del t}(t, \theta)|dt\\
&\leq& c_b\int_0^1|e^{2bt_i}\mu(u_i)(t, \theta)|dt\\
&\leq&c_b e^{2bt_i}\|\mu(u_i)(\cdot, \theta)\|_{L^p([0, 1])},
\eeastar
where $c_b$ is some constant only depending on $b$. Here the second inequality is due to \eqref{eq:translationeq}, and the third one is from the H\"older's inequality.
 Notice that for sequence $t_i=i$, $i=1, 2, \cdots$, $\eta_i(1, \theta)=\eta_{i+1}(0, \theta)$. It follows from the above estimate that 
\beastar
\|\eta_{i+1}(0, \cdot)-\eta_i(0, \cdot)\|_{L^p(S^1)}&=&(\int_{S^1}|\eta_i(1, \theta)-\eta_i(0, \theta)|^pd\theta)^{\frac{1}{p}}\\
&\leq&c_be^{2bt_i}\|\mu(u_i)\|_{L^p(K_0)}\\
&\leq&c_be^{-\delta' i},
\eeastar 
where the last inequality is \eqref{eq:mu}. 
This leads to the convergence of $\eta_i(0, \theta)$ to some $\eta_\infty\in L^p(S^1, \CG)$ in $L^p(S^1)$-norm as $i\to \infty$ with decay rate $\delta'$. 
Further applying \eqref{eq:translationeq} and \eqref{eq:mu}, we obtain 
$$
\|\eta(t, \cdot)-\eta_\infty(\cdot)\|_{L^p(S^1)}\leq c_b e^{-\delta' t}. 
$$

Similarly, for any $\theta\in S^1$,
\beastar
\dist(u_{i+1}(0, \theta), u_i(0, \theta))&\leq&\int_0^1|\frac{\del u_i}{\del t}(t, \theta)|dt\\
&\leq&ce^{-\delta i},
\eeastar
where the last inequality follows from \eqref{eq:udecay} applying to a temporal vortex. 
This proves that there exists some $z_\infty\in C^0(S^1, X)$ such that $u_i(0, \theta)$ converges to $z_\infty(\theta)$ exponentially fast with decay rate $\delta$.  Further using \eqref{eq:udecay} again, this convergence can be enhanced to 
$$
\dist(u(t, \theta), z_\infty(\theta)\leq ce^{-\delta t}, \quad \text{for any } \theta\in S^1,
$$
and also we have $\mu(z_\infty)=0$ since \eqref{eq:mudecay}.

Now we show $(z_\infty, \eta_\infty)$ is a limiting loop. Notice that from the convergences we have shown above, the vector field 
$X_{\eta(t, \theta)}(u(t, \theta))$ converges to the vector field $X_{\eta_\infty(\theta)}(z_\infty(\infty))$ via parallel transports. 
At the same time, we have the exponential decay of $|\frac{\del u}{\del\theta}(t, \theta)+X_{\eta(t, \theta)}(u(t, \theta))|$ to $0$ with rate $\delta$, it follows 
that $z_\infty\in W^{1, p}(S^1)$ and hence $C^{0, \alpha}(S^1)$ by the Sobolev embedding theorem, and 
$$
\dot{z}_\infty+X_{\eta_\infty}(z_\infty)=0.
$$
This finishes the proof of Proposition \ref{prop:main}.

\bigskip

Now we use the canonical gauge transformation defined by \eqref{eq:h} and obtain the $W^{1, p}(S^1, G)$ based gauges $h_{\eta(t)}, h_{\eta_\infty}\in L_0G$. Clearly, by the exponential $L^p(S^1)$ convergence of $\eta(t, \cdot)$,  we have 
\beastar
\dist_{L^p(S^1)}(\Hol_{\eta(t)}, \Hol_{\eta_\infty})&\leq& ce^{-\delta' t}\\
\dist_{L^p(S^1)}(h_{\eta(t)}, h_{\eta_\infty})+\|\frac{\del h_{\eta(t)}}{\del\theta}-\frac{\del h_{\eta_\infty}}{\del\theta}\|_{L^p(S^1)}&\leq &ce^{-\delta' t},
\eeastar
where $c>0$ is some constant (which may be bigger than the constant $c$ before). 

Now define the gauge transformation for the trivial bundle over $[0, \infty)\times S^1$ as  
$\Phi(t, \theta)=h_{\eta_\infty}(\theta)$.
It follows 
\beastar
\|\Phi^*\eta(t, \cdot)-h_{\eta_\infty}^*\eta_\infty\|_{L^p(S^1)}&\leq&ce^{-\delta' t}\\
\|\dist(\Phi^*u(t, \cdot), h_{\eta_\infty}^*z_\infty)\|_{L^p(S^1)}&\leq&ce^{-\delta' t}.
\eeastar
In particular, here $h_{\eta_\infty}^*\eta_\infty=-\Log\Hol_{\eta_\infty}$ is constant, and $(h_{\eta_\infty}^*z_\infty, h_{\eta_\infty}^*\eta_\infty)$ is a critical loop with constant connection. 

This finishes the proof of Corollary \ref{cor:main}.

\bigskip
We end this section by the following two remarks for the proof.

\begin{rem}\label{rem:techreason}
\begin{enumerate}
\item The decay rate $\delta$ can be enhanced to the minimal nonzero eigenvalues of the self-adjoint Hessian operator restricted to slices as in Proposition \ref{prop:hess} (3). The proof can be achieved by mimicking the three-interval method given for the Morse--Bott case in the contact manifold situation (see \cite{oh-wang}) under the normal form given in Section \ref{sec:darboux}.  We leave details to readers. 
In particular, the optimal decay rate for $\delta$ is in fact the inverse of the order of the holonomy $\Hol_{\eta_\infty}$, which is $1$ if assuming the action is free on $\mu^{-1}(0)$. 
\item The technical point which forces us to obtain the higher order exponential decay as in the current proof instead of directly applying the Morse--Bott condition from the result in Proposition \ref{prop:hess}, is that one is not able to obtain a convergent subsequence to localize the operator or the functional near a critical point from the a priori estimates (\cite[Lemma 9.3]{gaio-salamon}) for the case $b\not\equiv 0$. 

This trouble disappears for the standard cylindrical metric, i.e., the case $b\equiv 0$, and an alternative proof direct to the Morse--Bott condition was given in \cite{L2} by the authors. 
\end{enumerate}
\end{rem}

\bigskip

{\bf Acknowledgments:}
Bohui Chen and Bai-Ling Wang would like to thank RIMS and in particular Professor Kaoru Ono for the hospitality and excellent research environment. Rui Wang would like to thank Professor Yong-Geun Oh for his continuous support and collaborations over the years. 

%%%%%%%%%%%%%%%%%%%%   End of main body of article
%
%                             References
%
%   BiBTeX users uncomment the following line:
%
%\bibliographystyle{gtart}
%


\begin{thebibliography}
\bibitem{cieliebak-gaio-salamon}Cieliebak, Kai, Ana Rita Gaio, and Dietmar A. Salamon.  \emph{J-holomorphic curves, moment maps, and invariants of Hamiltonian group actions.} International Mathematics Research Notices 2000.16 (2000): 831-882.
\bibitem{chen-ruan}Chen, Weimin, and Yongbin Ruan. \emph{A new cohomology theory of orbifold.} Communications in Mathematical Physics 248.1 (2004): 1-31.
\bibitem{L2}Chen, Bohui, Bai-Ling Wang, and Rui Wang. \emph{$L^2$-Moduli spaces of symplectic vortices on Riemann surfaces with cylindrical end metrics. } submitted. 
\bibitem{L2moduli}Chen, Bohui, Bai-Ling Wang, and Rui Wang.  \emph{$L^2$-symplectic vortices and Hamiltonian Gromov--Witten invariants},   in preparation.
\bibitem{augL2}Chen, Bohui, Bai-Ling Wang, and Rui Wang.  \emph{Augmented symplectic vortices, Hamiltonian equivariant Gromov--Witten invariants and quantization  of Kirwan morphisms}, in preparation.
\bibitem{quotientmorphism}Chen, Bohui, Bai-Ling Wang, and Rui Wang. \emph{The quotient morphism moduli space}, in preparation. 
\bibitem{fan-jarvis-ruan}Fan, Huijun, Tyler Jarvis, and Yongbin Ruan. "The Witten equation, mirror symmetry, and quantum singularity theory." Annals of Mathematics 178.1 (2013): 1-106.
\bibitem{frauenfelder1}Frauenfelder, Urs. "The Arnold-Givental conjecture and moment Floer homology." International Mathematics Research Notices 2004.42 (2004): 2179-2269.
\bibitem{frauenfelder2}Frauenfelder, Urs. "Vortices on the cylinder." International Mathematics Research Notices 2006 (2006).
\bibitem{gaio-salamon}Gaio, Ana Rita Pires, and Dietmar A. Salamon. \emph{Gromov--Witten invariants of symplectic quotients and adiabatic limits.} Journal of symplectic geometry 3.1 (2005): 55-159.
\bibitem{mundet1}Mundet-i-Riera, I. \emph{Yang-Mills-Higgs theory for symplectic fibrations.} Diss. Ph. D. Thesis, UAM (Madrid), 1999.
\bibitem{mundet2}Mundet-i-Riera, I. \emph{A Hitchin-Kobayashi correspondence for Kähler fibrations.} arXiv preprint math.DG/9901076 (1999).
\bibitem{oh-wang}Oh, Yong-Geun, and Rui Wang. \emph{Analysis of contact Cauchy--Riemann maps II: Canonical neighborhoods and exponential convergence for the Morse--Bott case.}  Nagoya Mathematical Journal (2017): 1-96.
\bibitem{tian-xu}Tian, Gang, and Guangbo Xu. \emph{The symplectic approach of gauged linear $\sigma $-model.} arXiv preprint arXiv:1702.01428 (2017).
\bibitem{venugopalan}Venugopalan, Sushmita. "Vortices on surfaces with cylindrical ends." Journal of Geometry and Physics 98 (2015): 575-606.
\bibitem{venugopalan-woodward}Venugopalan, Sushimita, and Chris Woodward. \emph{Classification of affine vortices.} Duke Mathematical Journal 165.9 (2016): 1695-1751.
\bibitem{wang-xu}Wang, Dongning, and Guangbo Xu. "Compactness in the adiabatic limit of disk vortices." Mathematische Zeitschrift (2015): 1-55.
\bibitem{weinstein}Weinstein, Alan. \emph{Symplectic manifolds and their Lagrangian submanifolds.} Advances in mathematics 6.3 (1971): 329-346.
\bibitem{woodward1}Woodward, Chris T.  \emph{Quantum Kirwan morphism and Gromov--Witten invariants of quotients I.} Transformation Groups 20.2 (2015): 507-556.
\bibitem{woodward2}Woodward, Chris T.  \emph{Quantum Kirwan morphism and Gromov--Witten invariants of quotients II.} Transformation Groups 20.3 (2015): 881-920.
\bibitem{woodward3}Woodward, Chris T.  \emph{Quantum Kirwan morphism and Gromov--Witten invariants of quotients III.} Transformation Groups 4.20 (2015): 1155-1193.
\bibitem{xu-ham}Xu, Guangbo. "Gauged Hamiltonian Floer homology I: Definition of the Floer homology groups." Transactions of the American Mathematical Society 368.4 (2016): 2967-3015.
\bibitem{ziltener-thesis}Ziltener, Fabian. \emph{Symplectic vortices on the complex plane and quantum cohomology.} Diss. ETH, 2006.
\bibitem{ziltener-exp}Ziltener, Fabian.  \emph{The invariant symplectic action and decay for vortices.} J. Symplectic Geom. 7 (2009), no. 3, 357--376. http://projecteuclid.org/euclid.jsg/1250169197.
\bibitem{ziltener-affine}Ziltener, Fabian.  \emph{A quantum Kirwan map: bubbling and Fredholm theory for symplectic vortices over the plane.} Vol. 230. No. 1082. American Mathematical Society, 2014.


\end{thebibliography}
\end{document}